\newtheorem{thm}{Theorem}[section]
\newtheorem*{theorem*}{Theorem}
\newtheorem{lem}[thm]{Lemma}
\newtheorem{prop}[thm]{Proposition}
\theoremstyle{definition}
\newtheorem{defn}[thm]{Definition}
\theoremstyle{remark}
\newtheorem{rem}[thm]{Remark}
\numberwithin{equation}{section}
\begin{document}

\title[Optimal bounds for Oscillating convolution operators on the Heisenberg group]
{$L^2$ and $H^p$ boundedness of strongly singular operators and oscillating operators on Heisenberg groups}
\author{Woocheol Choi}
\subjclass[2010]{Primary 42B20}
\address{Department of Mathematical Sciences, Seoul National University, Seoul 151-747, Korea}
\email{chwc1987@snu.ac.kr}
\thanks{The author was supported by Global Ph.D Fellowship of the government of South Korea.}

\keywords{Strongly singular operators, Oscillatory integral operators, Heisenberg groups, Hardy spaces}

\maketitle
\begin{abstract} In this paper we establish sharp $L^2$ and $H^p$ boundedness results for strongly singular operators and oscillating operators on Heisenberg groups. 
%
\end{abstract}

\section{The reviewer's comments}
Reviwer's comment: As I said, this is a very well written article. One minor typo that I noticed was two incidences where the Laghi-Lyall[10] was simply referred to as Laghi[10](page 3 line 6 and page 8 line 16), this should be changed.
\

$\rightarrow$ We changed the two incidences by the blue colored text ( page 3 line 12 and page 8 line 24).

\section{Introduction}
The setting of this paper is the Heisenberg group $\mathbb{H}^n_a$, $a \in \mathbb{R}^{*}$, realized as  $\mathbb{R}^{2n+1}$ equipped with the group law,
\begin{eqnarray*}
(x,t) \cdot (y,s) = (x+ y, s+t -2 a x^{T} J y), \qquad J =
\begin{pmatrix} 0 & I_n \\ - I_n & 0
\end{pmatrix}.
\end{eqnarray*}
 This group is equipped with the following anisotropic dilations,
\begin{eqnarray*}
\lambda \cdot (x,t) = (\lambda x , \lambda^2 t ), \qquad \lambda > 0.
\end{eqnarray*}
For $K \in \mathcal{D}'(\mathbb{H}_a^n)$ we denote by $T_K$ the convolution operator defined by $K$, i.e, 
\begin{eqnarray*}
T_K f (x,t)  : = K * f (x,t) = \int_{\mathbb{H}^n_a} K\left( (x,t) \cdot (y,s)^{-1} \right) f(y,s) dy dx, \qquad f \in C_0^{\infty} (\mathbb{H}_a^{n}).
\end{eqnarray*}
We say that the operator $T_K$ is bounded on $L^{p}(\mathbb{H}^n)$ if there exist a $C > 0$ such that
\begin{eqnarray*}
\| T_K f \|_p \leq C \|f \|_p, \quad \textrm{for all } ~ f \in C^{\infty}_{0} ( \mathbb{H}^n_a). 
\end{eqnarray*}
A natural quasi-norm on the Heisenberg group is given by 
\[
\rho (x,t) = (|x|^4 + t^2 )^{1/4}, \qquad (x,t)\in \mathbb{H}_a^n.
\]This quasi-norm satisfies $\rho (\lambda \cdot(x,t)) = \lambda \rho (x,t)$.
For this quasi-norm, we define the strongly singular kernels,
\[
 K_{\alpha, \beta} (x,t) ={\rho(x,t)^{- (2n+2+\alpha)}}e^{i\rho(x,t)^{-\beta}} \chi ( \rho(x,t)), \quad \alpha>0, \quad \beta>0,
\]
where $\chi$ is a smooth bump function in a small neighborhood of the origin. This operator was introduced by Lyall~\cite{lyall} who showed that $T_{K_{\alpha,\beta}}$ is bounded when $\alpha \leq n \beta$. This result was obtained by using the Fourier transform on the Heisenberg group in combination with involved estimates on oscillatory integrals. Subsequently, Laghi-Lyall~\cite{laghi} obtained sharp results in the special case $a^2 < C_{\beta}$ (where $C_{\beta}$ is given by~(\ref{eq:Intro.Cbeta})) by using a version for the Heisenberg group of the $L^2$-boundedness theorem for non-degenerate oscillatory integral operators of H\"ormander~\cite{ho}. 
In this paper, we shall consider the cases $a^2 \geq C_{\beta}$ and obtain sharp conditions using the  theory for oscillatory integral operators with degenerate phases. 

Strongly singular convolution operators were originally considered on $\mathbb{R}^n$. Such operators correspond to suitable oscillating multipliers. They were first studied, by Fourier transform techniques, in the Euclidean setting with $\rho(x) = |x|$ by Hirschman  \cite{hi}, Wainger~\cite{Wainger}, Fefferman~\cite{Fe}, and Fefferman-Stein \cite{Fe2}. 
\

In addition the convolution operator with kernel of the form $ \frac{1}{|x|^{n-\alpha}} e^{i |x|^{\beta}}, \alpha, \beta >0$, has been investigated in the last decades. Such kernels have no singularity near the origin, but they assume relatively small decaying property at infinity. The case $\beta=1$ corresponds to the kernel of Bochner-Riesz means. For $\beta \neq 1$, the $(L^p,L^q)$ estimates and Hardy space estimates has been completely studied by Miyachi~\cite{mi}, Pan-Sampson~\cite{PSam} and  Sj\'olin~\cite{Sj1, Sj2,Sj3}. The difference between the two cases comes from the fact that the phase kernel $|x-y|^{\beta}$ is degenerate only if $\beta =1$.  
In this paper, we also consider the analogous problem on the Heisenberg groups for the following kernels,
\[
L_{\alpha, \beta}(x,t) = {\rho(x,t)^{- (2n+2-\alpha)}}e^{i\rho(x,t)^{\beta}} \chi (\rho(x,t)^{-1}), \quad \beta > 0.
\]
We denote by $T_{L_{\alpha,\beta}}$ the group convolution operators with the kernel $L_{\alpha,\beta}$. 
\

In the first part of this paper, we shall find the optimal ranges of $\alpha$ and $\beta$ where the convolution operators associated with $K_{\alpha,\beta}$ and 
$L_{\alpha, \beta}$ are bounded on $L^2 (\mathbb{H}_a^n)$.
\

Before stating our results, we recall the  previous results of Laghi-Lyall~\cite{laghi} and Lyall \cite{lyall}. Set
\begin{equation}
C_{\beta} = \frac{\beta+2}{2} ( 2\beta + 5 + \sqrt{(2\beta+5)^2 -9}).
\label{eq:Intro.Cbeta}
\end{equation}
Then we have 
\begin{theorem*}[Laghi-Lyall~\cite{laghi}, Lyall \cite{lyall}]\mbox{~}
\begin{enumerate}
\item $T_{K_{\alpha,\beta}}$ is bounded on $L^2  (\mathbb{H}^n_a)$ if $ \alpha \leq n \beta$.
\item  If $ 0 < {a^2} < C_{\beta}$, then $T_{K_{\alpha,\beta}}$ is bounded on $L^2 (\mathbb{H}_a^n)$ if and only if $\alpha \leq (n+ 1/2) \beta$.
\end{enumerate}
\end{theorem*}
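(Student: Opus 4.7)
The plan is a standard dyadic decomposition adapted to the Heisenberg dilations. Write $K_{\alpha,\beta} = \sum_{k\geq 0} K_k$ with $K_k$ supported on $\{\rho(x,t) \sim 2^{-k}\}$. Since $\delta_\lambda(x,t) = (\lambda x, \lambda^2 t)$ is a group automorphism, conjugating $T_{K_k}$ by $\delta_{2^k}$ preserves the $L^2$ operator norm and produces a convolution whose kernel is supported on $\rho \sim 1$, has amplitude of size $2^{k\alpha}$, and carries the oscillating phase $e^{i 2^{k\beta}\rho^{-\beta}}$. The theorem then reduces to showing $\|T_{K_k}\|_{L^2\to L^2} \lesssim 2^{k\alpha - k\beta\sigma}$ for $\sigma = n$ in part~(1) and $\sigma = n+\tfrac12$ in part~(2), and summing the geometric series.

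To exploit the oscillation, take the partial Fourier transform in the central variable $t$. This converts $T_{K_k}$ into a family of twisted convolution operators on $\mathbb{R}^{2n}$ parametrized by the dual variable $\tau$; each is an oscillatory integral operator of the form $T_\lambda f(x) = \int e^{i\lambda \Phi(x,y)} a(x,y) f(y)\, dy$ with $\lambda \sim 2^{k\beta}$, whose phase $\Phi$ encodes both the radial oscillation of $\rho^{-\beta}$ and the symplectic form $ax^TJy$ coming from the group law. For part~(1), the Schr\"odinger representation together with an integration-by-parts argument gives the uniform bound $\|T_\lambda\| \lesssim \lambda^{-n}$, from which $\alpha \leq n\beta$ follows. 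For part~(2), the improvement to $\sigma = n+\tfrac12$ is the gain furnished by H\"ormander's $L^2$ theorem for non-degenerate oscillatory integral operators; the hypothesis $a^2 < C_\beta$ should be exactly the quantitative condition forcing the mixed Hessian $\partial_x\partial_y \Phi$ to have maximal rank $2n$ on the support of the amplitude, uniformly in $\tau$ and in the localization parameter $k$. Sharpness of the exponent would come from a Knapp-type test function adapted to the rescaled phase, namely the characteristic function of a small Heisenberg box whose $x$- and $t$-dimensions balance the frequency $2^{k\beta}$.

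The main obstacle is the Hessian computation: one has to combine the Hessian of $\rho^{-\beta}$ with the symplectic contribution $ax^TJy$ and determine when the resulting $2n\times 2n$ matrix remains invertible on $\rho\sim 1$. The closed form $C_\beta = \tfrac{\beta+2}{2}(2\beta+5+\sqrt{(2\beta+5)^2-9})$ strongly suggests that the determinant is a quadratic polynomial in $a^2$ with discriminant $(2\beta+5)^2 - 9$, so the threshold for invertibility is precisely the larger root. Once this algebraic step is carried out, everything else is standard machinery; the rest of the paper, which treats $a^2 \geq C_\beta$, presumably replaces H\"ormander's theorem by a suitable degenerate-phase estimate and incurs a loss in the $\tfrac12$ gain accordingly.
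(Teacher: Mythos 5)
Note first that the paper does not actually prove this theorem: it is quoted as background from Lyall and Laghi--Lyall. The paper's own machinery, however, effectively contains a proof of part (2) in the non-degenerate case: the dyadic rescaling and localization of Section 2 reduce matters to a unit-scale oscillatory integral operator on $\mathbb{R}^{2n+1}$ with parameter $\lambda\sim 2^{k\beta}$, and Lemma \ref{det} and Lemma \ref{factor} show that the $(2n+1)\times(2n+1)$ mixed Hessian has determinant $c(|\mathbf{x}|^4+a^2\mathbf{t}^2)^{m_1}(|\mathbf{x}|^4+\mathbf{t}^2)^{m_2}f(\mathbf{x},\mathbf{t})$ with $f$ a quadratic form in $(|\mathbf{x}|^4,\mathbf{t}^2)$ whose sign-definiteness is governed by $\Delta=4a^4-4(\beta+2)(2\beta+5)a^2+9(\beta+2)^2$. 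Your guess that $C_\beta$ is the larger root of a quadratic in $a^2$ with discriminant controlled by $(2\beta+5)^2-9$ is therefore correct in substance.

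There are, however, two genuine gaps. The first is a dimensional inconsistency in your argument for part (2). If you take the partial Fourier transform in $t$ and work with a $\tau$-family of operators on $\mathbb{R}^{2n}$, then H\"ormander's non-degenerate theorem yields at best $\lambda^{-2n/2}=\lambda^{-n}$ per fiber; a mixed Hessian of ``maximal rank $2n$'' can never produce the exponent $n+\tfrac12$, so your stated mechanism for part (2) cannot work as written. (The fibered picture is essentially Lyall's route to part (1).) The extra $\tfrac12$ comes precisely from \emph{not} fibering: one keeps the operator on $\mathbb{R}^{2n+1}$, where non-vanishing of the full $(2n+1)\times(2n+1)$ Hessian determinant --- equivalent to $a^2<C_\beta$ by Lemma \ref{factor} --- gives $\lambda^{-(2n+1)/2}=\lambda^{-(n+\frac12)}$. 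To salvage your version you would need an additional orthogonality or decay argument in the central frequency $\tau$, which you do not supply. The second gap is at the endpoint: ``summing the geometric series'' handles only $\alpha<\sigma\beta$, whereas the theorem asserts boundedness for $\alpha\le\sigma\beta$; when $\alpha=\sigma\beta$ the dyadic pieces have uniformly bounded but non-summable norms, and one must invoke the Cotlar--Stein almost-orthogonality lemma (as the paper does in Section 2) to conclude. A smaller omission: after rescaling, the amplitude is supported only where $\rho\left((x,t)\cdot(y,s)^{-1}\right)\sim 1$ but is not compactly supported in $(x,t,y,s)$ jointly, so the localization with uniform control of the cut-offs carried out in \eqref{Red2}--\eqref{Red4} is needed before any oscillatory integral theorem applies.
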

We shall prove the sharp $L^2$ boundedness results for $T_{K_{\alpha,\beta}}$ when $a^2 \geq C_{\beta}$. 
\begin{thm}\label{main1}~
\begin{enumerate} 
\item If ${a^2} > C_{\beta}$, then $T_{K_{\alpha,\beta}}$ is bounded on $L^2 (\mathbb{H}^n_a)$ if and only if $\alpha \leq (n+ \frac{1}{3})\beta$. 
\item If ${a^2} = C_{\beta}$, then $T_{K_{\alpha,\beta}}$ is bounded on $L^{2}(\mathbb{H}^n_a)$ if and only if $\alpha \leq (n+\frac{1}{4})\beta$.
\end{enumerate}
\end{thm}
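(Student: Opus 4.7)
The plan is to perform a dyadic decomposition adapted to the singular support at the origin. Write $K_{\alpha,\beta}=\sum_{j\geq j_0} K_j$, where $K_j$ is the restriction of $K_{\alpha,\beta}$ to the annulus $\rho(x,t)\sim 2^{-j}$ via a smooth partition of unity in $\rho$. By the anisotropic dilation structure, $T_{K_j}$ is unitarily equivalent (via $\delta_{2^j}\colon (x,t)\mapsto(2^jx,2^{2j}t)$, which preserves the group law of $\mathbb{H}_a^n$ without altering $a$) to an oscillatory integral operator of the form
\[
\tilde T_j f(u) \;=\; 2^{j\alpha}\int e^{i\, 2^{j\beta}\Phi(u,v)}\, a(u,v)\, f(v)\,dv,
\]
where $\Phi(u,v)=\rho(u\cdot v^{-1})^{-\beta}$ and $a$ is a smooth amplitude compactly supported in the region $\rho(u\cdot v^{-1})\sim 1$. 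The problem therefore reduces to an oscillatory integral operator with large parameter $\lambda_j=2^{j\beta}$ on $\mathbb{H}_a^n\cong \mathbb{R}^{2n+1}$.

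The next step is to analyse the mixed Hessian $H_\Phi(u,v)=\det\bigl(\partial^2\Phi/\partial u\,\partial v\bigr)$. A direct computation using the Heisenberg law and the explicit formula for $\rho$ shows that $H_\Phi$ is a polynomial in $a^2$ and $\beta$ whose vanishing locus $\Sigma$ and order of vanishing on $\Sigma$ undergo a transition at precisely $a^2=C_\beta$. One expects that for $a^2>C_\beta$ the Hessian vanishes simply on a smooth hypersurface (a fold‐type singularity), whereas for $a^2=C_\beta$ two sheets of $\Sigma$ come together and the degeneracy is one order worse (cusp‐type). Applying the Phong--Stein / Greenleaf--Seeger type bounds for oscillatory integral operators with such phases yields
\[
\|\tilde T_j\|_{L^2\to L^2}\;\lesssim\;\lambda_j^{-(n+1/3)}\ \ \text{if }a^2>C_\beta,\qquad \|\tilde T_j\|_{L^2\to L^2}\;\lesssim\;\lambda_j^{-(n+1/4)}\ \ \text{if }a^2=C_\beta,
\]
i.e., $\|T_{K_j}\|_{L^2\to L^2}\lesssim 2^{j(\alpha-\beta(n+\gamma))}$ with $\gamma=\frac{1}{3}$ or $\frac{1}{4}$. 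Summing the geometric series gives $L^2$‐boundedness whenever $\alpha<\beta(n+\gamma)$; the endpoint $\alpha=\beta(n+\gamma)$ is captured by a Cotlar--Stein argument, estimating $T_j T_k^*$ and $T_j^* T_k$ by non‐stationary phase in the difference of frequencies, to produce the decay in $|j-k|$ needed for almost orthogonality.

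For the sharpness direction I would follow the classical Hirschman--Wainger/Fefferman strategy transplanted to the Heisenberg setting. The idea is to exhibit a family of test functions $f_j$ concentrated on a Heisenberg ball of radius $2^{-j}$ whose convolution with $K_{\alpha,\beta}$ concentrates near a distinguished point where the phase $\rho^{-\beta}$ is stationary along the degeneracy locus of $\Phi$. Evaluating the stationary phase on the critical set of the appropriate type (fold for $a^2>C_\beta$, cusp for $a^2=C_\beta$) produces exactly the loss $\lambda_j^{n+1/3}$, respectively $\lambda_j^{n+1/4}$, matching the upper bound and forcing the stated conditions on $\alpha$.

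The main obstacles I anticipate are the two geometric computations underlying the positive result: first, identifying $C_\beta$ intrinsically as the transition value at which the Hessian $H_\Phi$ changes rank, and second, verifying that the singularity is of exactly fold, resp.\ cusp, type (so that the existing degenerate oscillatory integral theorems apply with the exponents $1/3$ and $1/4$). The endpoint case $\alpha=\beta(n+\gamma)$ is also delicate because it requires sharp almost‐orthogonality between different dyadic pieces; however, the rapid oscillation of $e^{i\rho^{-\beta}}$ across different scales should make the off‐diagonal kernels non‐stationary, and the gain from integration by parts in the group variables should give the summability needed to close the Cotlar--Stein sum.
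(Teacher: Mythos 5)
Your proposal follows essentially the same route as the paper: dyadic decomposition and anisotropic rescaling to oscillatory integral operators with phase $\rho(u\cdot v^{-1})^{-\beta}$ and large parameter $2^{j\beta}$, identification of $a^2=C_\beta$ as the value where the mixed Hessian passes from a simple (1-type) fold to an order-two fold, application of the Greenleaf--Seeger/Pan--Sogge bounds with gains $\frac13$ and $\frac14$, and Cotlar--Stein to assemble the dyadic pieces. The only steps you elide are ones the paper spends real effort on --- the localization making the amplitude compactly supported uniformly over group translates, and the explicit factorization of $\det H$ plus the transversality check for the null directions of $d\pi_L,d\pi_R$ that certifies the fold types --- while the necessity direction you sketch is not actually carried out in the paper for this theorem.
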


For the operators $T_{L_{\alpha,\beta}}$, we shall obtain the sharp $L^2$ boundedness results except the case $\beta = 1$ and the case $\beta = 2$.
\begin{thm}\label{main2}~ 
\begin{enumerate}
\item If $ 0 < \beta < 1$, then $T_{L_{\alpha,\beta}}$ is bounded on $L^2$ if and only if one of the following condition holds.
\begin{enumerate}
\item[(i)]   ${a^2} < C_{\beta}$ and $\alpha \leq (n+\frac{1}{2}) \beta$,
\item[(ii)] ${a^2} = C_{\beta}$ and $\alpha \leq ( n+ \frac{1}{4})\beta$,
\item[(iii)] ${a^2} > C_{\beta}$ and $\alpha \leq (n+ \frac{1}{3}) \beta$.
\end{enumerate}
\item If $ 1 < \beta < 2$, then $T_{L_{\alpha,\beta}}$ is bounded on $L^2$ if and only if $\alpha \leq (n+\frac{1}{3}) \beta$.
\item If $ 2 < \beta  $, then $T_{L_{\alpha,\beta}}$ is bounded on $L^2$ if and only if  $\alpha \leq (n+ \frac{1}{2}) \beta$.
\end{enumerate}
\end{thm}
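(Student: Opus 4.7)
The plan parallels the treatment of $T_{K_{\alpha,\beta}}$ in Theorem~\ref{main1}: decompose the kernel dyadically, rescale each piece to unit scale via the anisotropic Heisenberg dilation, and reduce the problem to $L^2$ bounds for oscillatory integral operators on $\mathbb{H}^n_a$ whose behavior depends on the relation between $a^2$ and $C_\beta$ as well as on $\beta$ itself.

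\emph{Decomposition and rescaling.} Write $L_{\alpha,\beta}=\sum_{k\geq 0}L_k$, where $L_k$ is supported in the annulus $\rho\sim 2^k$. On that annulus $|L_k|\lesssim 2^{-k(2n+2-\alpha)}$, and under the dilation $\delta_{2^k}(x,t)=(2^kx,2^{2k}t)$ the operator $T_{L_k}$ is unitarily equivalent on $L^2$ to a convolution with an amplitude of size $2^{k\alpha}$ supported in $\rho\sim 1$ and phase $e^{i2^{k\beta}\rho((x,t)\cdot(y,s)^{-1})^\beta}$. Summing the geometric series $\sum_k\|T_{L_k}\|_{L^2\to L^2}$ reduces to proving, for each rescaled piece, a bound of the form $O(2^{-k(c_{\beta,a}\beta-\alpha)})$ with a positive gap.

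\emph{Case analysis via the phase.} The required gain $c_{\beta,a}$ for the oscillatory integral operator with phase $\Phi_a(x,t;y,s)=\rho((x,t)\cdot(y,s)^{-1})^\beta$ and large parameter $2^{k\beta}$ is governed by the rank of the mixed Hessian of $\Phi_a$. For $0<\beta<1$, the analysis of $\Phi_a$ is structurally parallel to that of $\rho^{-\beta}$ treated in Theorem~\ref{main1}: the critical comparison is $a^2$ against $C_\beta$, yielding subcases (i)--(iii). For $\beta>2$, the phase $\rho^\beta$ is regular enough that its mixed Hessian has full rank independently of $a$, and H\"ormander's non-degenerate theorem gives the gain $n+\tfrac{1}{2}$. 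For $1<\beta<2$, an intermediate rank drop appears which is uniform in $a$, producing the single condition $\alpha\leq(n+\tfrac{1}{3})\beta$. The sharp dependence on $\alpha$ is then read off from the combination of the amplitude scaling $2^{k\alpha}$ with the oscillatory decay $2^{-kc_{\beta,a}\beta}$.

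\emph{Sharpness.} The necessity of each stated inequality is obtained through Knapp-type test functions concentrated at a single dyadic scale $2^k$, whose Fourier support is adapted to the level sets of $\Phi_a$ at that scale. These constructions mirror the counterexamples underlying Theorem~\ref{main1}, modified for the opposite sign of $\beta$ in the exponent of $\rho$.

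\emph{Main obstacle.} The delicate step is the characterization of the degeneracy of $\Phi_a$ in the three $\beta$-regimes. Verifying that $C_\beta$ intervenes only when $0<\beta<1$, while the regimes $1<\beta<2$ and $\beta>2$ yield $a$-independent bounds, requires computing the relevant Hessian determinants on the unit annulus jointly as functions of $a$ and $\beta$, and then applying the correct non-degenerate or degenerate oscillatory integral theorem in each regime. The excluded values $\beta=1$ and $\beta=2$ presumably correspond to transitional situations in which this Hessian analysis breaks down.
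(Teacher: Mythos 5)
Your outline follows the same route as the paper: dyadic decomposition of $L_{\alpha,\beta}$, anisotropic rescaling to the unit annulus with large parameter $2^{j\beta}$, and a case analysis of the degeneracy of the phase $\rho((x,t)\cdot(y,s)^{-1})^{\beta}$ in which $C_\beta$ enters only for $0<\beta<1$, the regime $1<\beta<2$ gives a uniform $n+\frac13$ gain, and $\beta>2$ is non-degenerate. However, two steps as you describe them would not deliver the stated theorem. First, you propose to conclude by ``summing the geometric series $\sum_k\|T_{L_k}\|_{L^2\to L^2}$ \dots with a positive gap.'' That only proves boundedness for the open range $\alpha<(n+c)\beta$; the theorem asserts boundedness up to and including $\alpha=(n+c)\beta$, where the dyadic pieces have norms $O(1)$ and the series diverges. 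The paper gets the endpoint by first proving almost orthogonality $\|T_j^*T_{j'}\|+\|T_jT_{j'}^*\|\leq C_N2^{-\max\{j,j'\}N}$ for $|j-j'|\geq c_\beta$ and invoking the Cotlar--Stein lemma, so that only a uniform bound on each piece is needed. Second, your mechanism for the case analysis --- ``the required gain \dots is governed by the rank of the mixed Hessian'' --- cannot distinguish the exponents $\frac14$ and $\frac13$: in every degenerate case occurring here the mixed Hessian drops rank by exactly one on a hypersurface. What separates $a^2=C_\beta$ (gain $n+\frac14$) from $a^2>C_\beta$ and from $1<\beta<2$ (gain $n+\frac13$) is the \emph{order of vanishing} of $\det H$ in the null directions of the projections $\pi_L,\pi_R$ from the canonical relation, i.e.\ whether these are $1$-type or $2$-type folds in the sense of Proposition \ref{degenerate}. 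Establishing this requires the explicit factorization of $\det H$ (Lemmas \ref{det} and \ref{factor}: the quadratic in $(|x|^4,t^2)$ has a double root exactly when $a^2=C_\beta$ and two simple roots when $a^2>C_\beta$, while for the phase $\rho^{+\beta}$ one substitutes $-\beta$ and lands in the cases $(-1,0)$, $(-2,-1)$, $(-\infty,-2)$ of Lemma \ref{factor}), together with the verification that the null direction is transverse to the singular variety (Lemma \ref{rank} and the computation in the proof of Propositions \ref{LA}--\ref{LB}). None of this is carried out in your proposal; it is precisely the content you defer to the ``main obstacle.''

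Two further points. The rescaled operator is a convolution operator whose kernel is supported where $\rho((x,t)\cdot(y,s)^{-1})\sim1$, but the amplitude is not compactly supported in $((x,t),(y,s))$ jointly, so the oscillatory integral theorems do not apply directly; the paper inserts a covering $\{B(g_k,2)\}$ with bounded overlap and checks, via the measure-preserving left translations \eqref{det1}, that the resulting local operators are uniformly controlled as in \eqref{Red2}--\eqref{Red4}. This localization is absent from your sketch. Finally, the necessity half of the ``if and only if'' is only asserted in your proposal (``Knapp-type test functions''); since the sufficiency and necessity thresholds must match exactly in each of the five subcases, this direction also needs an actual construction.
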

 In \cite{laghi} Laghi-Lyall reduced the boundedness problem for operators on the Heisenberg group to that for the local operators and used a version of H\"omander's $L^2$-boundedness theorem on the Heisenberg group. However, as we shall show, we may view the operators on the Heisenberg group as operators on Euclidean space $\mathbb{R}^{2n+1}$. This will enable us to use the oscillatory integral estimates of Greenleaf-Seeger~\cite{GR} and Pan-Sogge~\cite{PS} on Euclidean space. 

For the cases $\beta = 1$ or $\beta = 2$, folds with degree $>3$ appear in the reduced local oscillatory integral operators for some values of $a$. The sharp estimates for degerate oscillatory integral estimates has been achieved for degree less or equal to $3$ (see Greenleaf-Seeger~\cite{GR} and Pan-Sogge~\cite{PS}). We hope to address the remaining problem in the future.

\

 For $p > 1$,  $L^p$ boundedness can be obtained by interpolation between the $L^2$ boundedness estimates and some $L^1$ boundedness estimates for dyadic-piece operator. We refer to {\textcolor{blue}{Laghi-Lyall}~\cite[Theorem 5]{laghi}} for the case $a^2 <  C_{\beta}$ except the endpoint. Using the interpolation technique, we shall get the $L^p$ boundedness in the case $a^2 \geq C_{\beta}$.
\begin{thm}\label{LP1}~
\begin{enumerate}
\item If $a^2 > C_{\beta}$, then $T_{K_{\alpha,\beta}}$ is bounded on $L^p (\mathbb{H}_a^{n})$ if $\alpha -(n+ \frac{1}{3})\beta < 2 \beta (n+ \frac{1}{3}) \left|\frac{1}{p} - \frac{1}{2}\right|$.
\item If $a^2 = C_{\beta}$, then $T_{K_{\alpha,\beta}}$ is bounded on $L^p(\mathbb{H}_a^{n})$ if $\alpha -(n+ \frac{1}{4})\beta < 2 \beta ( n+ \frac{1}{4}) \left|\frac{1}{p} -\frac{1}{2}\right|$.
\end{enumerate}
\end{thm}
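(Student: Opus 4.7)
The plan is a standard dyadic-piece argument: decompose the kernel $K_{\alpha,\beta}$ near the origin, estimate each piece separately in $L^1$ and in $L^2$, and then apply Riesz-Thorin interpolation so that the resulting $L^p$ bounds on each piece are summable in $j$.

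Using a smooth partition of unity, write $K_{\alpha,\beta}=\sum_{j\ge 0}K_j$, where $K_j$ is localized to the annulus $\rho(x,t)\sim 2^{-j}$. A trivial pointwise estimate combined with the volume bound $|\{\rho\sim 2^{-j}\}|\sim 2^{-j(2n+2)}$ (from the homogeneous dimension of $\mathbb{H}_a^n$) gives $\|K_j\|_{L^1}\lesssim 2^{j\alpha}$, and hence $\|T_{K_j}\|_{L^1\to L^1}\lesssim 2^{j\alpha}$, with the same bound on $L^\infty\to L^\infty$ by duality.

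The core step is the sharp $L^2$ estimate on each piece. I would rescale by the anisotropic Heisenberg dilation $\delta_{2^j}$, so that $T_{K_j}$ is conjugate to a fixed-scale convolution operator whose kernel is smooth on $\rho\sim 1$ and carries an oscillation of frequency $\sim 2^{j\beta}$. The degenerate oscillatory integral estimates of Greenleaf-Seeger~\cite{GR} and Pan-Sogge~\cite{PS} that already underlie Theorem~\ref{main1} then give the decay $\|T_{K_j}\|_{L^2\to L^2}\lesssim 2^{-j[(n+\frac{1}{3})\beta-\alpha]}$ when $a^2>C_\beta$, and the analogous bound with $n+\frac{1}{4}$ when $a^2=C_\beta$. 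This is where the hypothesis on $a^2$ enters: the specific fold structure of the rescaled phase at each threshold dictates the exponent $n+\frac{1}{3}$ or $n+\frac{1}{4}$.

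Finally, Riesz-Thorin interpolation between the $L^1$ and $L^2$ bounds yields, with $\theta=2|\frac{1}{p}-\frac{1}{2}|$,
\[
\|T_{K_j}\|_{L^p\to L^p}\lesssim 2^{j\alpha\theta}\cdot 2^{-j(1-\theta)[(n+\frac{1}{3})\beta-\alpha]}=2^{j[\alpha-(1-\theta)(n+\frac{1}{3})\beta]},
\]
and summing the geometric series over $j\ge 0$ converges exactly when the exponent is negative, which upon rearrangement is the stated hypothesis. Part~(2) follows identically, with $n+\frac{1}{3}$ replaced by $n+\frac{1}{4}$. The only substantive input is the sharp dyadic $L^2$ estimate; the $L^1$ bound, the interpolation, and the summation are routine. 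The main obstacle is thus purely notational, namely tracking the correct exponents through the rescaling so that the oscillatory integral theorems of \cite{GR,PS} apply to the rescaled kernel with the sharp constants $n+\frac{1}{3}$ and $n+\frac{1}{4}$.
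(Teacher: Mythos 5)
Your proposal follows essentially the same route as the paper: the paper likewise takes the $L^1\to L^1$ bound $2^{j\alpha}$ from Young's inequality, the $L^2\to L^2$ bound $2^{j(\alpha-(n+\frac{1}{3})\beta)}$ from Proposition~\ref{LA} (via the rescaling reduction of Section~2), interpolates, and sums the geometric series, so the approaches coincide. One caveat, which the paper's own proof shares: the summability condition you actually derive is $\alpha<(1-\theta)(n+\frac{1}{3})\beta$ with $\theta=2\left|\frac{1}{p}-\frac{1}{2}\right|$, which rearranges to $\alpha-(n+\frac{1}{3})\beta<-2\beta(n+\frac{1}{3})\left|\frac{1}{p}-\frac{1}{2}\right|$, i.e.\ with a minus sign where the theorem as stated has a plus sign, so your closing claim that the exponent condition ``upon rearrangement is the stated hypothesis'' should be checked against the (apparently mistyped) statement rather than asserted.
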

\begin{thm}\label{LP2}~
\begin{enumerate}
\item If $0< \beta<1$, then $T_{L_{\alpha,\beta}}$ is bounded on $L^p(\mathbb{H}_a^n)$ if one of the following holds.
\begin{enumerate}
\item[(i)] $a^2 < C_{\beta}$ and $ \alpha - (n+\frac{1}{2})\beta < 2\beta (n+ \frac{1}{2})\left|\frac{1}{p} -\frac{1}{2}\right|$,
\item[(ii)] $a^2 = C_{\beta}$ and $ \alpha - (n+ \frac{1}{4})\beta < 2 \beta (n+\frac{1}{4}) \left|\frac{1}{p} -\frac{1}{2}\right|$,
\item[(iii)] $a^2< C_{\beta}$ and $\alpha - (n+ \frac{1}{3})\beta < 2 \beta (n+ \frac{1}{3})\left|\frac{1}{p} -\frac{1}{2}\right|$.
\end{enumerate}
\item If $1 < \beta <2$, then $T_{L_{\alpha,\beta}}$ is bounded on $L^p(\mathbb{H}_a^n)$ if $\alpha - 2(n+\frac{1}{3})\beta < 2 \beta (n+ \frac{1}{3}) \left|\frac{1}{p} -\frac{1}{2}\right|$.
\item If $2 < \beta$, then $T_{L_{\alpha,\beta}}$ is bounded on $L^p(\mathbb{H}_a^n)$ if $\alpha -2 (n+ \frac{1}{2}) \beta < 2 \beta (n+\frac{1}{2}) \left|\frac{1}{p} -\frac{1}{2}\right|$.
\end{enumerate}
\end{thm}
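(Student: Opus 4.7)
The plan is to follow the interpolation template of Laghi--Lyall~\cite[Theorem~5]{laghi}, using the new $L^2$ estimates of Theorem~\ref{main2} as the $L^2$ endpoint. The structure is the same in all three items of the statement; only the constant in the $L^2$ bound changes with the subcase.

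First I would perform a dyadic decomposition of $L_{\alpha,\beta}$ in the quasi-norm. Pick a smooth $\phi$ supported in $[1/2,2]$ with $\sum_{k\ge 0}\phi(2^{-k}r)=1$ for $r\ge 1$, and write $L_{\alpha,\beta}=\sum_{k\ge 0}L^{(k)}$ with $L^{(k)}(x,t)=\phi(2^{-k}\rho(x,t))L_{\alpha,\beta}(x,t)$, so that $L^{(k)}$ lives on the Heisenberg annulus $\rho\sim 2^k$. Because the dilation $\delta_\lambda$ is a group automorphism with $\rho\circ\delta_\lambda=\lambda\rho$ and Jacobian $\lambda^{2n+2}$, a change of variables gives
\[
\|T_{L^{(k)}}\|_{L^p\to L^p}=2^{k\alpha}\,\|T_{M_k}\|_{L^p\to L^p},\qquad M_k(y,s)=\rho(y,s)^{-(2n+2-\alpha)}e^{i 2^{k\beta}\rho(y,s)^\beta}\phi(\rho(y,s)),
\]
reducing the problem to analyzing unit-scale oscillatory operators $T_{M_k}$ with large phase parameter $\lambda_k:=2^{k\beta}$.

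Next I would install the two endpoint bounds for $T_{M_k}$. For the $L^2$ endpoint, the oscillatory integral analysis behind Theorem~\ref{main2}, applied at unit scale with the large phase $\lambda_k\rho^\beta$, gives
\[
\|T_{M_k}\|_{L^2\to L^2}\le C\lambda_k^{-s},
\]
with $s=n+\tfrac12,\,n+\tfrac14,\,n+\tfrac13,\,n+\tfrac13,\,n+\tfrac12$ in subcases (1)(i), (1)(ii), (1)(iii), (2), and (3) respectively---exactly the values that make Theorem~\ref{main2} sharp. For the $L^1$ endpoint, the uniform pointwise size of $M_k$ yields $\|T_{M_k}\|_{L^1\to L^1}\le\|M_k\|_{L^1}\le C$. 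Feeding these into the complex interpolation scheme of \cite[Theorem~5]{laghi}, applied to an analytic family of kernels $\{L_{z,\beta}\}$ with $z$ varying in a vertical strip around $z=\alpha$, produces $L^p\to L^p$ estimates on each $T_{L^{(k)}}$ whose sum over $k\ge 0$ is a convergent geometric series exactly when $\alpha-(n+\gamma)\beta<2\beta(n+\gamma)\left|\tfrac{1}{p}-\tfrac{1}{2}\right|$, with $\gamma\in\{\tfrac12,\tfrac14,\tfrac13\}$ chosen according to the subcase. For $p>2$ the result follows from the $p<2$ case by duality, as the adjoint kernel has the same structure as $L_{\alpha,\beta}$.

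The main obstacle is arranging the interpolation so that the sharp $L^2$ improvements of Theorem~\ref{main2} translate into the sharp $L^p$ condition of the statement. A direct Riesz--Thorin interpolation between the decaying $L^2$ bound and the trivial $L^1$ bound is not enough, because the latter carries no decay in $k$; instead the argument must exploit an analytic family of kernels on a complex strip whose boundary values furnish matched $L^2$ and $L^1$ control---possibly through an $H^1\to L^1$ refinement of the $L^1$ endpoint that uses the cancellation in $e^{i 2^{k\beta}\rho^\beta}$---and then invoke Stein's complex interpolation. Once this family is in place, the bookkeeping is the same as in the Laghi--Lyall proof for $a^2<C_\beta$; what the present theorem contributes is the verification that the new $L^2$ bounds from Theorem~\ref{main2} can be substituted into that scheme without further loss.
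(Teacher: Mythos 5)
Your setup---the dyadic decomposition along the quasi-norm, rescaling each piece to unit scale so that $\|T_{L^{(k)}}\|_{L^p\to L^p}=2^{k\alpha}\|T_{M_k}\|_{L^p\to L^p}$, the $L^2$ bound $\|T_{M_k}\|_{L^2\to L^2}\lesssim 2^{-k\beta(n+\gamma)}$ coming from Proposition~\ref{LB}, and the trivial $L^1$ bound from Young's inequality---is exactly the paper's route. Where you part ways is at the final step, and there you go wrong: you assert that a direct Riesz--Thorin interpolation between these two dyadic bounds ``is not enough'' and defer instead to an unspecified analytic family of kernels and Stein's complex interpolation. The paper's proof \emph{is} the direct interpolation: with $\theta=2(\tfrac1p-\tfrac12)$ for $1\le p\le 2$ one gets
$\|S_j\|_{L^p\to L^p}\lesssim (2^{j\alpha})^{\theta}\,(2^{j(\alpha-(n+\gamma)\beta)})^{1-\theta}=2^{j(\alpha-2\beta(n+\gamma)(1-\frac1p))}$,
and the geometric series over $j$ converges precisely when $\alpha<2\beta(n+\gamma)(1-\tfrac1p)$; duality then handles $p>2$. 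So the step you discard is essentially the whole proof, and the step you substitute for it is never constructed (you would need to exhibit the analytic family, verify its admissible growth, and prove an $H^1\to L^1$ or BMO endpoint, none of which you do). Your stated reason---that the $L^1$ bound carries no decay in $k$---is not an obstruction: the decay in the $L^2$ bound enters the interpolated exponent with positive weight $1-\theta$ for every $p>1$, which is all the strict inequality in the theorem requires. An analytic-family refinement would only be relevant for the endpoint case of equality, which the theorem does not claim.

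A bookkeeping point you should not gloss over: the direct interpolation yields the condition in the form $\alpha-(n+\gamma)\beta< -2\beta(n+\gamma)\left|\tfrac1p-\tfrac12\right|$, i.e.\ the admissible range of $\alpha$ \emph{shrinks} as $p$ moves away from $2$, consistent with Fefferman's Euclidean result and with the form of Laghi--Lyall's Theorem~5; it does not produce the $+$ sign printed in the statement, which you have reproduced and claimed your scheme delivers ``exactly.'' Carrying out the exponent computation, rather than asserting the match, would have exposed both this discrepancy and the fact that plain interpolation already suffices.
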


The second part of this paper is devoted to prove the boundedness on Hardy spaces $H^p$ ($p \leq 1$) of the operators $T_{K_{\alpha,\beta}}$ and $T_{L_{\alpha,\beta}}$. On Euclidean space the boundedness on Hardy spaces was proved up to the endpoint cases by Sj\'olin~\cite{Sj1, Sj3}. In this case, the operator can be thought as a multiplier operator $T f = {(m \widehat{f})}^{\vee}$ and we have the relation $c_p \sum_{j=1}^{n} \| R_j f \|_{L^p} \leq \| f\|_{H^p} \leq C_p \sum_{j=1}^{n} \| R_j f \|_{L^p}$ and we see that derivatives of the symbol $\frac{\xi_j}{|\xi|} m(\xi)$ of the multiplier $R_j m(D)$ are pointwisely bounded by the derivatives of the symbol $m(\xi)$. These things make it possible to calculate the $H^p$ norm accurately to obtain the sharp boundedness result including for the endpoint cases 
(see Miyachi~\cite{mi}). 

The above outline seems difficult to adapt to the Heisenberg group. Instead we shall make use of the molecular decomposition of Hardy spaces. Then we obtain the following result.
\begin{thm}\label{main3} Let $p \in (0,1)$ and let $\alpha$ and $\beta$ be real numbers such that $(\frac{1}{p} -1)(2n+2)\beta + \alpha < 0$. Then 
\begin{enumerate}
\item The operator $T_{K_{\alpha,\beta}}$ is bounded on $H^p$ space.
\item For $\beta \neq 1$, the operator $T_{L_{\alpha,\beta}}$ is bounded on $H^p$ space.
\end{enumerate}
These conditions are optimal except for the endpoint case $(\frac{1}{p} -1 )(2n+2)\beta + \alpha = 0$. 
\end{thm}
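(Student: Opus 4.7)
The plan is to use the molecular characterization of the Heisenberg Hardy spaces. It suffices to show that for every $(p,2,s)$-atom $a$ supported in a quasi-ball $B = B(g_0, r) \subset \mathbb{H}_a^n$ (satisfying $\|a\|_2 \le |B|^{1/2-1/p}$ and vanishing moments up to order $s$), the image $Ta$ is a $(p,2,s,\epsilon)$-molecule with constants independent of $a$. Here $T$ denotes $T_{K_{\alpha,\beta}}$ or $T_{L_{\alpha,\beta}}$; the two cases are treated in parallel, with dyadic scales labelled oppositely and $\beta = 1$ excluded for $L_{\alpha,\beta}$.

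First, I would perform a dyadic decomposition of the kernel. For $K_{\alpha,\beta}$, write
\[
K_{\alpha,\beta} = \sum_{j \ge j_0} K^{(j)}, \qquad \supp K^{(j)} \subset \{2^{-j-1} \le \rho \le 2^{-j+1}\},
\]
and analogously for $L_{\alpha,\beta}$ with $j \le j_0$. On scale $j$ the kernel has size $\sim 2^{j(Q+\alpha)}$, $Q = 2n+2$, and the phase $\rho^{\mp\beta}$ has effective frequency $\sim 2^{j(\beta+1)}$. Rescaling each $K^{(j)}$ to the unit scale by the anisotropic dilation and invoking Theorem~\ref{main1} (resp.\ Theorem~\ref{main2}) yields dyadic $L^2\to L^2$ bounds for $T_{K^{(j)}}$, while integration by parts against the oscillating phase yields pointwise derivative estimates for $K^{(j)}$ with explicit $j$-dependence.

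Second, I would establish the molecular decay by estimating $\|Ta\|_{L^2(R_k)}$ on annular shells $R_k = \{2^k r \le \rho(g \cdot g_0^{-1}) \le 2^{k+1} r\}$. Split the sum over $j$ into a short-range regime ($2^{-j} \lesssim r$), handled by the dyadic $L^2$ bound using that $T_{K^{(j)}}a$ is essentially supported within an $O(2^{-j})$-neighbourhood of $B$, and a long-range regime ($2^{-j} \gtrsim r$), which requires the cancellation of $a$. In the long-range regime I would Taylor-expand $K^{(j)}(g \cdot h^{-1})$ about $g \cdot g_0^{-1}$ using the left-invariant homogeneous structure of $\mathbb{H}_a^n$, gaining factors of $(r 2^j)^{s+1}$ that overcome the kernel growth. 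The resulting geometric series in $j$ and $k$ converges precisely when $(\tfrac{1}{p} - 1)Q\beta + \alpha < 0$. The zero-moment identity $\int Ta = 0$ follows from $\int (K \ast a) = (\int K)(\int a) = 0$, and higher moments from left-invariant homogeneous polynomials on $\mathbb{H}_a^n$. For sharpness I would construct a Knapp-type example on a thin tube where the phase is near-stationary; at the critical exponent the corresponding $H^p$-norm blows up.

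The main obstacle is the long-range step: unlike Miyachi's proof on $\mathbb{R}^n$, one lacks a Riesz-transform characterisation of $H^p(\mathbb{H}_a^n)$ and a scalar multiplier picture, so the gain from $\int a = 0$ must be extracted entirely on the spatial side by careful differentiation of the oscillating factor $e^{\pm i \rho^{\pm\beta}}$ against left-invariant vector fields of $\mathbb{H}_a^n$. The bookkeeping of derivative weights against the dyadic $L^2$ bounds is delicate, and it is there that the condition $(\tfrac{1}{p}-1)Q\beta + \alpha < 0$ emerges as the sharp threshold.
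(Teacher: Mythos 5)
Your overall architecture coincides with the paper's: dyadic decomposition of the kernel, reduction to a single $(p,2,s)$-atom via the atomic decomposition, verification that the image is a $(p,2,s,\epsilon)$-molecule with uniformly bounded molecular norm $\mathcal{N}$, a near/far split relative to the support of the atom with a dyadic $L^2$ bound on the near part and the stratified Taylor expansion of Folland--Stein (Proposition \ref{appro}) combined with the moment conditions on the far part, and a final geometric summation (Lemma \ref{sum}) producing the threshold $(\frac{1}{p}-1)(2n+2)\beta+\alpha<0$. Your annular shells $R_k$ are a harmless variant of the paper's direct estimate of the weighted norm $\|(K_j*f)(x)\,|x|^{(2n+2)b}\|_2$ split over $|x|\le 2R$ and $|x|>2R$.

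Two points, however, are genuine gaps. First, you propose to obtain the dyadic $L^2$ bounds by invoking Theorem \ref{main1} (resp.\ Theorem \ref{main2}). Theorem \ref{main2} excludes $\beta=2$ (the sharp degenerate oscillatory-integral estimates are simply not available for the high-order folds arising there), yet part (2) of the statement is asserted for all $\beta\neq 1$. The paper avoids this by using only the weaker but universally valid bound \eqref{L2}, namely $\|L_{A_j}\|_{L^2\to L^2}+\|L_{B_j}\|_{L^2\to L^2}\lesssim 2^{j(\alpha-n\beta)}$, which follows from the non-degeneracy of the upper-left $2n\times 2n$ block (Lemma \ref{rank}) rather than from the full fold analysis; the exponent bookkeeping still closes because the $H^p$ hypothesis is strictly stronger than what the exponent $\alpha-n\beta$ requires. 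You should replace the appeal to the sharp theorems by this uniform estimate. Second, your sharpness argument is of the wrong type: a Knapp-type tube adapted to the stationary phase tests $L^2$ or $(L^p,L^q)$ optimality, but it does not detect the exponent $\frac{1}{p}-1$. The paper's counterexample is instead the rescaled family $f_{\epsilon}(x)=\epsilon^{-(2n+2)/p}f(x/\epsilon)$, where $f=g(x_1)h(x_2,\dots,x_{2n+1})$ has vanishing moments up to order $k$ but a nonzero $(k+1)$-st moment in $x_1$; then $K_{\alpha,\beta}*f_{\epsilon}(x)\sim\epsilon^{(2n+2)+k+1-(2n+2)/p}\,\partial_{x_1}^{k+1}K(x)$ for $|x|\gtrsim\epsilon^{1/(\beta+1)}$, and integrating $|\partial_{x_1}^{k+1}K|^p\sim |x|^{-p((2n+2)+\alpha+(k+1)(\beta+1))}$ over that region forces $(\frac{1}{p}-1)(2n+2)\beta+\alpha\le 0$. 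Without an example of this kind the optimality claim is not established.
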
 
This paper is organized as follows. In Section 2, we reduce the $L^2$ boundeness problem on the Heisenberg group to a local oscillatory integral estimates on Euclidean space. In Section 3, we recall some essential results for the oscillatory integral operators with degenerate phase functions and study geometry of the canonical relation and projection maps associated with the phase functions of the reduced operators. Then, we  will complete the proof of Theorem \ref{main1} and Theorem \ref{main2}. In section 4, we recall some background on hardy spaces on the Heisenberg group and its basic properties. In section 5, we prove Theorem \ref{main3}. In Section 6, we show that the conditions of Theorem \ref{main3} are sharp except the endpoint cases.
\

\section*{Notation}
We will use the notation $\lesssim$ instead of $\leq C$ when the constant $C$ depends only on the fixed parameters such as $a, \alpha, \beta$ and n. In addition, we will use the notation $A \sim B$ when both inequalities $A \lesssim B$ and $A \gtrsim B$ hold.

\section{Dyadic decomposition and Localization}

In this section we reduce our problems to some oscillatory integral estimates problem on Euclidean space $\mathbb{R}^{2n+1}$. This reduction is well-known for  operators on Euclidean space (see Stein \cite{St}). The issue of this reduction on the Heisenberg group is to control the localized operators $\tilde{T}_j^{k,l}$ in \eqref{A} uniformly for $(g_k, g_l)$ such that $\rho(g_k \cdot g_l^{-1}) \leq 2$. Note that the cut-off functions $\eta( \rho((x,t) \cdot g_k^{-1}))~\eta(\rho((y,s)\cdot g_l^{-1}))$ have no uniform bound for their derivatives. Nevertheless we get the uniformity after a value-preserving change of coordinates (see \eqref{det1}).
\

We decompose the kernels $K_{\alpha, \beta}$ and $L_{\alpha, \beta}$ as 
\begin{eqnarray}\label{decom1}
K_{\alpha, \beta} (x,t) &=& \sum_{j=1}^{\infty} K_{\alpha,\beta}^{j}, \quad   K_{\alpha,\beta}^{j} : =  \eta (2^j \rho(x,t)) K_{\alpha, \beta} (x,t),
\end{eqnarray}
and
\begin{eqnarray}\label{decom2}
L_{\alpha, \beta} (x,t) &=& \sum_{j=1}^{\infty} L_{\alpha,\beta}^{j} , \quad L_{\alpha,\beta}^{j} : = \eta (2^{-j} \rho(x,t)) L_{\alpha, \beta} (x,t),
\end{eqnarray}
where $\eta \in C^{\infty}_{0}(\mathbb{R})$ is a bump function supported in $[\frac{1}{2},2]$ such that $\sum_{j=0}^{\infty} \eta(2^j r) =1$ for all $ 0 < r \leq 1$. 
 For notational convenience, we omit the index $\alpha$ and $\beta$ from now on.
\\
Set $T_j f = K_{\alpha, \beta}^{j} * f$ and $S_j f = L_{\alpha,\beta}^{j} * f$. Then we have
\begin{lem}
For each $N \in \mathbb{N}$, there exist constants $C_N >0$ and $c_{\beta} >0$ such that
\begin{eqnarray*}
\| T_j^{*} T_{j'} \|_{L^2 \rightarrow L^2} + \| T_j T_{j'}^{*}\|_{L^2 \rightarrow L^2} \leq C_N 2^{-max\{j, j'\}N}
\\
\| S_j^{*} S_{j'} \|_{L^2 \rightarrow L^2} + \| S_j S_{j'}^{*}\|_{L^2 \rightarrow L^2} \leq C_N 2^{-max\{j, j'\}N}
\end{eqnarray*}
holds for all $j$ and $j'$ satisfying $|j - j'|  \geq c_{\beta}$.
\end{lem}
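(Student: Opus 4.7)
\medskip

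\noindent\textbf{Plan.} The four statements come in two pairs related by taking adjoints, so I treat only $T_j^{*}T_{j'}$ and $S_j^{*}S_{j'}$; and by the symmetry $j\leftrightarrow j'$ (passing to the adjoint of the composition) I may assume $j'\geq j+c_\beta$. The skeleton of the argument is the standard oscillatory-integral/Cotlar-Stein one: write the kernel of the composition as an oscillatory group convolution, integrate by parts to exploit the mismatch of phase scales, and then apply Young's inequality $\|M\ast f\|_{2}\leq \|M\|_{1}\|f\|_{2}$ for left-invariant convolution on $\mathbb{H}_a^n$.

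\medskip

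\noindent\textbf{Step 1 (Kernel of the composition).} A direct computation using the group convolution formula gives
\[
T_j^{*}T_{j'}f(y)=\int_{\mathbb{H}_a^n} M_{j,j'}(y\cdot z^{-1})\,f(z)\,dz,\qquad
M_{j,j'}(w):=\int \overline{K^{j}(u)}\,K^{j'}(u\cdot w)\,du.
\]
Because $\rho(u)\sim 2^{-j}$ and $\rho(u\cdot w)\sim 2^{-j'}$ on the supports, the triangle inequality for $\rho$ forces $\rho(w)\lesssim 2^{-j}$ and in fact, since $j'\gg j$, the dominant constraint is $u\approx w^{-1}$ modulo scale $2^{-j'}$. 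An analogous formula holds for $S_j^{*}S_{j'}$, supported in $\rho(w)\lesssim 2^{j'}$.

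\medskip

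\noindent\textbf{Step 2 (Rescaling to fixed scale).} Using the Heisenberg dilation $u=2^{-j'}\cdot\tilde u$, I put the fast-oscillating factor $K^{j'}(u\cdot w)$ on a unit-scale amplitude support. What results is an oscillatory integral in $\tilde u$ on a fixed compact set with phase
\[
\Phi(\tilde u,w)\;=\;2^{j'\beta}\,\rho\bigl((2^{-j'}\cdot\tilde u)\cdot w\bigr)^{-\beta}\;-\;2^{j\beta}\,\rho\bigl(2^{-j'}\cdot\tilde u\bigr)^{-\beta},
\]
times a smooth amplitude with derivatives bounded uniformly in $j,j'$ once one rescales the cut-off $\eta(2^{j}\rho(u))$ (this is the point noted in the introductory paragraph of Section 2 about uniform control after a value-preserving change of coordinates).

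\medskip

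\noindent\textbf{Step 3 (Non-stationarity and integration by parts).} The two summands of $\Phi$ live at wildly different frequency scales: after the rescaling, the first has $|\nabla_{\tilde u}\cdot|\sim 2^{j'\beta}$ while the second has $|\nabla_{\tilde u}\cdot|\sim 2^{j\beta}\cdot 2^{-(j'-j)}\ll 2^{j'\beta}$. Hence on the support
\[
|\nabla_{\tilde u}\Phi(\tilde u,w)|\;\gtrsim\; 2^{j'\beta},
\]
provided $j'-j\geq c_\beta$ with $c_\beta$ chosen large enough to absorb the lower-order amplitude derivatives. Repeated integration by parts through the operator $\tfrac{1}{i}\,|\nabla_{\tilde u}\Phi|^{-2}\,\nabla_{\tilde u}\Phi\cdot\nabla_{\tilde u}$ yields for every $N$
\[
|M_{j,j'}(w)|\;\lesssim_{N}\;2^{j\alpha}\,2^{j'\alpha}\,2^{-j'N\beta}.
\]
The analogous estimate holds for $S_j^{*}S_{j'}$ with the roles of small/large scale reversed, since $L_{\alpha,\beta}^{j}$ is supported where $\rho\sim 2^{j}$ and the phase $\rho^{\beta}$ has gradient of size $2^{j(\beta-1)}$.

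\medskip

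\noindent\textbf{Step 4 (Young's inequality).} The support of $M_{j,j'}$ has measure $\lesssim 2^{-j'(2n+2)}$, so
\[
\|M_{j,j'}\|_{L^{1}}\;\lesssim_{N}\; 2^{j\alpha+j'\alpha-j'(2n+2)}\,2^{-j'N\beta}\;\lesssim_{N}\;2^{-j'N'}
\]
for any $N'$ by choosing $N$ large. Young's inequality for left-invariant convolution then gives $\|T_j^{*}T_{j'}\|_{L^{2}\to L^{2}}\lesssim_{N}2^{-\max\{j,j'\}N}$. The estimates for $T_jT_{j'}^{*}$, $S_jS_{j'}^{*}$ follow by repeating the argument with the roles of the two kernels exchanged.

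\medskip

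\noindent\textbf{Main obstacle.} The genuine difficulty is Step 3: verifying the lower bound on $|\nabla_{\tilde u}\Phi|$ requires differentiating $\rho((2^{-j'}\cdot\tilde u)\cdot w)^{-\beta}$ through the non-commutative group law, so one must work with left-invariant vector fields and check that the contribution of the $t$-component (which interacts nonlinearly with the $x$-component via $2ax^T J y$) does not accidentally cancel the leading term. Once this is done, matching the exponents in Step 4 determines the correct threshold $c_\beta$; the factor $2^{j\alpha+j'\alpha-j'(2n+2)}$ is harmless because $N$ is arbitrary and the $j'\beta$-decay dominates.
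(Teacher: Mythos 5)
Your overall strategy --- compose the dyadic kernels, exploit the mismatch of oscillation scales by non-stationary phase, and finish with Young's inequality --- is exactly what the paper intends: its own proof is a single sentence declaring that the result follows from "the integration by parts technique in the typical way" and referring to Lyall's Lemma 2.4, which carries this out for the $T_j$ family. For $T_j^{*}T_{j'}$ your reconstruction is essentially right: the phase $\rho^{-\beta}$ oscillates at frequency $\sim 2^{j(\beta+1)}$ on the support of $K^{j}_{\alpha,\beta}$, this scale is increasing in $j$, so in $M_{j,j'}$ the gradient of the combined phase is dominated by the piece with the larger index and each integration by parts gains a factor $2^{-j'\beta}$; the bookkeeping of powers of $2^{j}$ and $2^{j'}$ in Step 4 is slightly off but harmless since $N$ is arbitrary. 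The cancellation issue you flag in the last paragraph (the twisted term $2ax^{T}Jy$ entering through the Jacobian of right translation, which can make the $x$-gradient and the $t$-gradient interact) is indeed the one point requiring genuine care there.

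The gap is in the sentence asserting that "the analogous estimate holds for $S_j^{*}S_{j'}$ with the roles of small/large scale reversed." For the kernels $L^{j}_{\alpha,\beta}$, supported where $\rho\sim 2^{j}$ with phase $\rho^{\beta}$, the oscillation frequency is $\sim 2^{j(\beta-1)}$. When $\beta>1$ this is increasing in $j$ and your argument goes through verbatim. But when $0<\beta<1$ it is \emph{decreasing} in $j$: for $j'\gg j$ the gradient of the combined phase $\rho(uw)^{\beta}-\rho(u)^{\beta}$ is dominated by the term at the \emph{smaller} scale and has size $\sim 2^{j(\beta-1)}$, while the amplitude derivatives are of size $\sim 2^{-j}$. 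Integration by parts therefore gains only $2^{-j\beta}=2^{-\min\{j,j'\}\beta}$ per step, and after multiplying by the kernel sizes and the measure of the $w$-support (which is $\sim 2^{j'(2n+2)}$) one obtains $\|M_{j,j'}\|_{L^{1}}\lesssim_{N}2^{-j\beta N}\,2^{(j+j')\alpha}$, which for fixed $j$ grows in $j'$ and in particular is not $O(2^{-\max\{j,j'\}N})$. So Steps 3--4 as written do not deliver the stated inequality for the $S$-family in the range $0<\beta<1$; a separate argument is needed there, and this is precisely the case that the paper's citation to Lyall does not cover.
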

\begin{proof} The proof follows from the integration parts technique in the typical way, so we omit the details. See Lyall \cite[Lemma 2.4]{lyall} where the proof for $T_j$ is given.
\end{proof}
 By Cotlar-Stein Lemma, we only need to show that there is a constant $C>0$ such that
\begin{eqnarray*}
\| T_j \|_{L^2 \rightarrow L^2} + \| S_j \|_{L^2 \rightarrow L^2} \leq C \quad \forall j \in \mathbb{N}. 
\end{eqnarray*}
We consider the dilated kernels
\begin{equation}\label{tildek}
\begin{split}
\tilde{K}^{j}_{\alpha,\beta} (x,t) =& K_{\alpha,\beta}^{j}(2^{-j} \cdot (x,t)) = \eta(\rho(x,t)) 2^{j(Q+\alpha)} \rho(x,t)^{-Q-\alpha} e^{i2^{j\beta} \rho(x,t)^{-\beta}},
\\
\tilde{L}^{j}_{\alpha,\beta} (x,t) =& L_{\alpha,\beta}^{j}(2^{-j} \cdot (x,t)) = \eta(\rho(x,t)) 2^{-j(Q-\alpha)} \rho(x,t)^{-Q+\alpha} e^{i2^{j\beta} \rho(x,t)^{\beta}}.
\end{split}
\end{equation}
We define $\tilde{T}_j$ and $\tilde{S}_j$ to be the convolution operators with kernels given by $\tilde{K}_{\alpha,\beta}^{j}$ and $\tilde{L}_{\alpha,\beta}^{j}$. Set $f_j (x,t) = f(2^{-j} \cdot (x,t))$. Then $K_{\alpha,\beta}^{j}*f (2^{-j}\cdot (x,t)) = 2^{-jQ} (\tilde{K}_{\alpha,\beta}^{j} * f_j ) (x,t)$, and we have

\begin{eqnarray}\label{Red1}
\begin{split}
\|T_j f \|_{L^2} = \|K^{j}_{\alpha,\beta} * f (x,t) \|_{L^2} = & 2^{-jQ/2} \| K_{\alpha,\beta} * f (2^{-j} \cdot (x,t))\|_{L^2}
\\
\leq & 2^{-jQ/2}\cdot 2^{-jQ} \|\tilde{K}_{\alpha,\beta}^{j} * f_j  (x,t) \|_{L^2}
\\
\leq& 2^{-jQ/2} \cdot 2^{-jQ} \|\tilde{T}_{j} \|_{L^2 \rightarrow L^2} \| f_j \|_{L^2}
\\
\leq& 2^{-jQ} \|\tilde{T}_{j} \|_{L^2 \rightarrow L^2} \|f\|_{L^2}.
\end{split}
\end{eqnarray}
Similarly, we have $\|S_j f \|_{L^2} \leq 2^{jQ} \| \tilde{S}_{j} \|_{L^2 \rightarrow L^2} \| f \|_{L^2}$. It follows  that it is enough to prove that $\| \tilde{T}_j \|_{L^2 \rightarrow L^2} \lesssim 2^{jQ}$ and $\|\tilde{S}_j \|_{L^2 \rightarrow L^2} \lesssim 2^{-jQ}$. 
\

Now, we further modify our operators to some operators defined locally using the fact that the kernels of $\tilde{T}_j$ and $\tilde{S}_j$ are supported in $\{ (x,t) : \rho(x,t) \leq 2 \}$. To do this we find a set of point $ G = \{ g_k : k \in \mathbb{N} \}$ such that $\bigcup_{k\in \mathbb{N}} B(g_k, 2 ) = \mathbb{H}^{n}_a $ and each $B( g_k, 4)$ contains only $d_n$'s other $g_l$ members in $G$.
\

We can split $f = \sum_{k=1}^{\infty} f_k$ with each $f_k$ supported in ${B(g_k ,2 )}$. Define
\begin{eqnarray}\label{A}
\tilde{T}_j^{k,l} f (x,t) = \int \tilde{K}_{\alpha,\beta}^{j} \left( (x,t) \cdot (y,s)^{-1} \right) \cdot \eta \left( \rho\left( (x,t) \cdot g_k^{-1}\right) \right) 
\eta \left( \rho\left( (y,s) \cdot g_l^{-1}\right) \right)  f(y,s) dy ds.
\end{eqnarray}
Then, 
\begin{equation}\label{Red2}
\begin{split}
\| \tilde{T}_j * f \|^{2}_{L^2(\mathbb{H}_a^{n})} \leq& \sum_{k=1}^{\infty} \| \tilde{T}_j * f \|^2_{L^2(B(g_k,2))} 
\\
\leq& \sum_{k=1}^{\infty} \| \tilde{T}_j * \sum_{l=1}^{\infty} f_l \|_{L^2 ( B(g_k,2))}^2
\\
\leq& \sum_{k=1}^{\infty} \| \tilde{T}_j * \sum_{\{ l : \rho(g_l \cdot g_k^{-1}) \leq 2 \}} f_l \|_{L^2((B(g_k,2)))}^{2}
\\
\lesssim& \sum_{k=1}^{\infty} \sum_{l : \rho(g_l \cdot g_k^{-1}) \leq 2} \|\tilde{T}_{j}^{k,l} \|_{L^2 \rightarrow L^2} \| f_l \|_{L^2}^2
\\
\lesssim&  \sup_{\substack{\rho ( g_l \cdot g_k^{-1}) \leq 2}} \|\tilde{T}_j^{k,l} \|_{L^2 \rightarrow L^2}  \|f \|_{L^2}^{2}.
\end{split}
\end{equation} 
We note that
\begin{eqnarray}\label{det1} 
\det \left(D_{x,t} \left( (x,t) \cdot g\right) \right) = 1 ~\textrm{for all }~g \in \mathbb{H}^{n}_a.
\end{eqnarray} 
Then, using the coordinate change $(y,s) \rightarrow ((y,s) \cdot g_k)$ and substituting $(x,t) \rightarrow ((x,t) \cdot g_k)$ in \eqref{A}, we get

\begin{eqnarray}\label{Red3}
\begin{split}
&\tilde{T}_j^{k,l} f \left( (x,t)\cdot g_k\right)
\\
&\quad \quad \quad \quad = \int \tilde{K}^{j}_{\alpha,\beta} \left( (x,t)\cdot (y,s)^{-1}\right) \eta(\rho(x,t))\eta(\rho((y,s)\cdot (g_k\cdot g_l^{-1}))) f((y,s)\cdot g_k) dy ds.
\end{split}
\end{eqnarray}
Notice that $\rho(g_k\cdot g_l^{-1}) \lesssim 1$. Set $\psi \left( (x,t),(y,s)\right)= \eta (\rho(x,t)) \eta(\rho((y,s)\cdot (g_k\cdot g_l^{-1})))$ and write $f$ just for $ f ( ()\cdot g_k)$. Then $\sup_{\rho(g_l \cdot g_k^{-1}) \leq 2} \| \tilde{T}_j^{k,l}\|$  will be achieved if we prove $\| \mathcal{T}_j \|_{L^2 \rightarrow L^2} \lesssim 2^{jQ}$ for 
\begin{eqnarray}\label{Red4}
\mathcal{T}_j f (x,t) = \int \tilde{K}_{\alpha,\beta}^{j} \left( (x,t)\cdot(y,s)^{-1} \right) \psi \left( (x,t) , (y,s) \right) f (y,s) dy ds
\end{eqnarray}
with a compactly supported smooth function $\psi$. Finally we set
\begin{equation}\label{AB}
\begin{split}
A_j (x,t) =& 2^{j\alpha} \mu(x,t) e^{i 2^{j\beta} \rho(x,t)^{-\beta}},
\\
B_j (x,t) =& 2^{j\alpha} \mu (x,t) e^{i 2^{j\beta} \rho(x,t)^{\beta}},
\end{split}
\end{equation}
where $\mu$ is a smooth function supported on the set $\{ (x,t) \in \mathbb{R}^{2n+1} : \frac{1}{10} \leq \rho(x,t) \leq 10 \}.$
We define the operators $L_{A_j}$ and $L_{B_j}$ by
\begin{eqnarray*}
L_{A_j} f (x,t) = \int A_j \left( (x,t) \cdot (y,s)^{-1}\right) \psi\left( (x,t), (y,s)\right) f(y,s) dy ds,
\\
L_{B_j} f(x,t) = \int B_j \left( (x,t) \cdot (y,s)^{-1} \right) \psi\left( (x,t),(y,s)\right) f(y,s) dy ds.
\end{eqnarray*}
We shall deduce Theorem \ref{main1} and Theorem \ref{main2} from the following propositions.
\begin{prop}\label{LA} ~
\begin{enumerate} 
\item If $a^2 > C_{\beta}$, then 
\[ \| L_{A_j}\|_{L^2 \rightarrow L^2} \lesssim 2^{j (\alpha - (n+\frac{1}{3})\beta)}, \qquad \forall j \in \mathbb{N}.\]
\item If $a^2 = C_{\beta}$, then 
\[ \|L_{A_j}\|_{L^2 \rightarrow L^2} \lesssim 2^{j (\alpha - (n+\frac{1}{4})\beta)}, \qquad \forall j \in \mathbb{N}.\]
\end{enumerate}
\end{prop}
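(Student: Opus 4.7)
The plan is to recast $L_{A_j}$ as a classical Hörmander-type oscillatory integral operator on $\mathbb{R}^{2n+1}$ with large parameter $\Lambda := 2^{j\beta}$, and then to apply the sharp $L^2$ decay estimates for oscillatory operators with degenerate phases due to Greenleaf-Seeger~\cite{GR} and Pan-Sogge~\cite{PS}. Using the Heisenberg group law and $(y,s)^{-1}=(-y,-s)$, one finds $(x,t)\cdot(y,s)^{-1} = (x-y,\, t-s + 2a\, x^T J y)$, so that
$$L_{A_j}f(x,t) = 2^{j\alpha}\int e^{i\, \Lambda\, \Phi(x,t,y,s)}\, b(x,t,y,s)\, f(y,s)\, dy\, ds,$$
with phase $\Phi(x,t,y,s) = \bigl(|x-y|^4 + (t-s+2a\, x^T J y)^2\bigr)^{-\beta/4}$ and compactly supported smooth amplitude $b = \mu\bigl((x,t)\cdot(y,s)^{-1}\bigr)\,\psi((x,t),(y,s))$. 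Writing $d = 2n+1$, the bounds asserted in (1) and (2) amount to $\|L_{A_j}\|_{L^2 \to L^2}\lesssim 2^{j\alpha}\,\Lambda^{-d/2+\delta}$ with losses $\delta = 1/6$ and $\delta = 1/4$ respectively, which are precisely the losses produced by a Whitney-fold and by a degree-$3$ cuspidal singularity of the projections from the canonical relation of $\Phi$.

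The heart of the argument is therefore the geometric analysis of the canonical relation
$$\mathcal{C}_\Phi = \bigl\{(x,t,\partial_{x,t}\Phi;\, y,s,-\partial_{y,s}\Phi) : (x,t,y,s)\in\mathrm{supp}\, b\bigr\}\subset T^*\mathbb{R}^d\times T^*\mathbb{R}^d,$$
together with its left and right projections $\pi_L,\pi_R$. Non-degeneracy is controlled by the mixed Hessian $H := \partial_{x,t}\partial_{y,s}\Phi$: the zero set $\Sigma = \{\det H = 0\}$ is the candidate fold locus, and the order of vanishing of $\det H$ along the kernel of $H$ determines the singularity type. Exploiting the scaling structure of $\rho$ and the bilinearity of $x^T J y$, the equation $\det H = 0$ should reduce to a polynomial identity in $a^2$ whose discriminant is a function of $\beta$; I expect the critical value $a^2 = C_\beta$ from~(\ref{eq:Intro.Cbeta}) to emerge precisely as the discriminant at which two simple fold roots coalesce into a double root. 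Thus for $a^2 > C_\beta$ the projection $\pi_L$ should have a generic Whitney fold along $\Sigma$, and the Greenleaf-Seeger estimate yields the $\Lambda^{-d/2 + 1/6}$ decay of part (1); for $a^2 = C_\beta$ the fold should degenerate to a cuspidal (degree-$3$) singularity, and the Pan-Sogge estimate for finite-type degeneracies of order at most $3$ yields the $\Lambda^{-d/2 + 1/4}$ decay of part (2).

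The main obstacle I anticipate is the Hessian analysis in the middle step: computing $H$ explicitly, identifying its rank-drop locus and kernel direction, and verifying that at $a^2 = C_\beta$ the determinant vanishes to precisely order $3$ (and no more) along the kernel. Because the symplectic form $J$ couples the spatial and central variables non-trivially, one cannot decouple the $x$- and $t$-directions, and both the rank and the kernel of $H$ depend delicately on $a$; one must diagonalize $H$ in coordinates adapted to both $\Sigma$ and $\ker H$ and then track the coefficient of the cubic term in the normal Taylor expansion of $\det H$, showing that it vanishes exactly along $a^2 = C_\beta$ without any further degeneration. Once this finite-type classification is in place, Proposition~\ref{LA} follows by direct application of the theorems of~\cite{GR} and~\cite{PS}, and Theorem~\ref{main1} is then deduced via the rescaling~(\ref{Red1}), the localization~(\ref{Red2})--(\ref{Red4}), and the Cotlar-Stein orthogonality lemma.
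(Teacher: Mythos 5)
Your overall strategy coincides with the paper's: rewrite $L_{A_j}$ as a H\"ormander-type oscillatory integral on $\mathbb{R}^{2n+1}$ with large parameter $\Lambda=2^{j\beta}$, study the canonical relation of $\Phi=\rho^{-\beta}$ composed with the group difference, locate the rank-drop locus of the mixed Hessian $H$, classify the singularities of $\pi_L,\pi_R$ by the order of vanishing of $\det H$ in the null direction, and invoke Greenleaf--Seeger and Pan--Sogge. Your exponent bookkeeping ($\Lambda^{-d/2+1/6}$ and $\Lambda^{-d/2+1/4}$ with $d=2n+1$) is correct, and your prediction that $C_\beta$ arises as the discriminant value at which two simple roots of $\det H=0$ coalesce into a double root is exactly what the paper's Lemma \ref{factor} establishes: $\det H$ is, up to nonvanishing factors, the quartic $f(x,t)=2(\beta+1)|x|^8+(3(\beta+2)-2a^2)|x|^4t^2+(\beta+2)a^2t^4$ evaluated at $(x,t)\cdot(y,s)^{-1}$, which has two distinct simple factors for $a^2>C_\beta$ and becomes a perfect square at $a^2=C_\beta$.

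There is, however, a concrete error in your classification of the degenerate case, and it would derail your own verification plan. At $a^2=C_\beta$ the quartic $f$ is the square of a simply-vanishing function, so $\det H$ vanishes to order exactly $2$ (not $3$) along the null direction: this is a $2$-type fold in the sense of Definition \ref{def}, and it is the $2$-type estimate of Proposition \ref{degenerate} that produces the loss $1/(2+2)=1/4$. If you instead "track the coefficient of the cubic term" and try to show the determinant "vanishes to precisely order $3$," you will find a nonvanishing quadratic term and your criterion will misfire; and a genuine order-$3$ vanishing would only yield a loss of $1/5$, i.e.\ the bound $2^{j(\alpha-(n+\frac{1}{5})\beta)}$, which is weaker than part (2). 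Beyond this, your write-up defers the actual content of the proof: the explicit computation of $\det H$ (which the paper accomplishes via the unimodular factorization $H=A_a(x)\,L\,A_a(y)^T$ reducing everything to a function of the group difference, followed by a rank-one update of the determinant), the verification that $H$ has corank exactly one on the singular set (Lemma \ref{rank}, via nondegeneracy of the principal $2n\times 2n$ minor, which is what makes the "null direction" well defined), and the transversality of that null direction to the singular hypersurface. These are the steps where the coupling through $J$ and the parameter $a$ actually enter, so as it stands the proposal is a correct road map for parts of the argument but not yet a proof, and its stated test for the $a^2=C_\beta$ case is the wrong one.
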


\begin{prop}\label{LB}~ 
\begin{enumerate}
\item If $ 0 < \beta < 1, $ then,
\begin{enumerate}
\item[(i)] For $a^2 < C_{\beta}$, 
\[\|L_{B_j}\|_{L^2 \rightarrow L^2} \lesssim 2^{j (\alpha -(n+\frac{1}{2})\beta)} \qquad \forall j \in \mathbb{N}.\]
\item[(ii)] For $a^2 = C_{\beta}$, 
 \[\|L_{B_j}\|_{L^2 \rightarrow L^2} \lesssim 2^{j (\alpha - (n+\frac{1}{4})\beta)} \qquad \forall j \in \mathbb{N}.\]
\item[(iii)] For $a^2 > C_\beta$, 
\[ \| L_{B_j}\|_{L^2 \rightarrow L^2} \lesssim 2^{j(\alpha - (n+\frac{1}{3})\beta)} \qquad \forall j \in \mathbb{N}.\]
\end{enumerate}
\item If $ 1 < \beta < 2,$ then 
\[ \|L_{B_j}\|_{L^2 \rightarrow L^2} \lesssim 2^{j (\alpha - (n+\frac{1}{3})\beta)} \qquad \forall j \in \mathbb{N}.\]
\item If $2 < \beta $, then \[ \| L_{B_j} \|_{L^2 \rightarrow L^2} \lesssim 2^{j (\alpha -(n+\frac{1}{2})\beta)} \qquad \forall j \in \mathbb{N}.\]
\end{enumerate}
\end{prop}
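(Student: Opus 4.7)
The approach is to view $L_{B_j}$ as an oscillatory integral operator on $\mathbb{R}^{2n+1}$ with large parameter $\lambda = 2^{j\beta}$,
$$L_{B_j}f(x,t) = 2^{j\alpha}\int e^{i\lambda \Phi(x,t;y,s)} \sigma(x,t;y,s) f(y,s)\,dy\,ds,$$
where $\Phi(x,t;y,s) = \rho\bigl((x,t)\cdot(y,s)^{-1}\bigr)^\beta$ and $\sigma$ is compactly supported in the annulus $\rho \sim 1$, so $\Phi$ is smooth on $\supp \sigma$. The target bounds $\|L_{B_j}\|_{L^2\to L^2} \lesssim 2^{j(\alpha-(n+\tau)\beta)}$ with $\tau \in \{1/2,1/3,1/4\}$ reduce to showing that the oscillatory integral operator obtained by stripping the $2^{j\alpha}$ factor has norm $\lesssim \lambda^{-(n+\tau)}$. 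These three exponents are precisely those produced, respectively, by a non-degenerate phase (H\"ormander), a Whitney fold singularity of the canonical projections (Greenleaf-Seeger~\cite{GR}), and a cusp-type singularity (Pan-Sogge~\cite{PS}).

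Next, introduce the variables $v = x-y$ and $u = t-s + 2ax^T J y$, which by \eqref{det1} give a Jacobian-one change of coordinates; in these variables $\Phi = (|v|^4 + u^2)^{\beta/4}$, depending on $(y,s)$ only through $(v,u)$. The mixed Hessian $H_\Phi := \partial_{(y,s)}\partial_{(x,t)}\Phi$ is then computed by the chain rule from $\Phi_{vv}, \Phi_{vu}, \Phi_{uu}$ and the Jacobians $\partial(v,u)/\partial(x,t)$ and $\partial(v,u)/\partial(y,s)$, both of which are affine in $x,y$ with coefficients depending on $a$. This yields $\det H_\Phi$ as an explicit function of $(v,u,x,y,a,\beta)$ whose zero locus and order of vanishing classifies the canonical relation $\mathcal{C}_\Phi = \bigl\{(x,t,\Phi_{(x,t)};y,s,-\Phi_{(y,s)})\bigr\}$ and its two projections onto $T^*\mathbb{R}^{2n+1}$.

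The case analysis then runs parallel to that of Proposition~\ref{LA}. For $0<\beta<1$ the dependence of $\det H_\Phi$ on $a^2$ is governed by the same quadratic which produces the threshold $C_\beta$ in \eqref{eq:Intro.Cbeta}: $\det H_\Phi$ is nowhere vanishing on $\supp \sigma$ when $a^2 < C_\beta$, has a Whitney fold zero on a codimension-one hypersurface when $a^2 > C_\beta$, and this fold collapses to a cusp at the critical value $a^2 = C_\beta$; applying H\"ormander's theorem, \cite{GR}, and \cite{PS} respectively gives (i)--(iii). For $1 < \beta < 2$ the extra homogeneity provided by the larger power of $\rho$ suppresses the $a$-sensitivity but still forces a Whitney fold on $\supp \sigma$, yielding the fold bound $\lambda^{-(n+1/3)}$. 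For $\beta > 2$ a direct computation shows that $\det H_\Phi$ is uniformly bounded below on $\supp \sigma$, and H\"ormander's theorem gives $\lambda^{-(n+1/2)}$.

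The main obstacle is the explicit calculation of $\det H_\Phi$ together with the verification of the finer geometric hypotheses required by Greenleaf-Seeger and Pan-Sogge, namely the simple-fold condition for the projections $\pi_L, \pi_R : \mathcal{C}_\Phi \to T^*\mathbb{R}^{2n+1}$ in the fold regime and the cusp-transversality condition at $a^2 = C_\beta$. The exclusion of the values $\beta = 1$ and $\beta = 2$ is forced by the same calculation: at these two values, the zero of $\det H_\Phi$ acquires order strictly greater than $3$ for certain values of $a$, producing a degenerate fold that lies beyond the scope of the currently available sharp $L^2$ theory for degenerate oscillatory integral operators.
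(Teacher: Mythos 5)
Your proposal follows essentially the same route as the paper: reduce $L_{B_j}$ to an oscillatory integral operator on $\mathbb{R}^{2n+1}$ with $\lambda = 2^{j\beta}$, compute $\det\bigl(\partial_{(y,s)}\partial_{(x,t)}\Phi\bigr)$ through the group-law change of variables (the paper's factorization $H = A_a(x)\,L\,A_a(y)^T$ with $\det A_a = 1$), classify the vanishing of the resulting quartic in $(|{\bf x}|^4,{\bf t}^2)$ according to $\beta$ and $a^2$ versus $C_\beta$ (Lemmas 3.4--3.6), verify the corank-one and fold conditions for $\pi_L,\pi_R$, and invoke H\"ormander, Greenleaf--Seeger and Pan--Sogge for the exponents $n+\tfrac12$, $n+\tfrac13$, $n+\tfrac14$. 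The only quibbles are cosmetic: the $\tfrac13$ fold bound is the Pan--Sogge result and the order-two (type-2 fold, not literally a cusp of the projection) bound $\tfrac14$ is Greenleaf--Seeger's, the reverse of your attribution, and the case structure for $L_{B_j}$ comes from substituting $-\beta$ for $\beta$ in the determinant polynomial, which is why the thresholds $\beta=1,2$ appear.
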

We get the first main result of this paper assuming these propositions:
\begin{proof}[Proof of Theorem \ref{main1} and Theorem \ref{main2}]
From the reductions \eqref{Red1}, \eqref{Red2} and \eqref{Red3}, in order to prove Theorem \ref{main1} it is enough to prove that $\|\mathcal{T}_j \|_{L^2 \rightarrow L^2} \lesssim 2^{jQ}$ for the operators $\mathcal{T}_j$ given in \eqref{Red4}. From \eqref{tildek} and \eqref{AB} we have $\mathcal{T}_j = 2^{jQ} L_{A_j}$ with a suitable function $\mu$, and so $\|\mathcal{T}_j\|_{L^2 \rightarrow L^2} = 2^{jQ} \| L_{A_j}\|_{L^2 \rightarrow L^2}$. Therefore, the estimates of Proposition \ref{LA} yield Theorem \ref{main1}. In the same way, Proposition \ref{LB} establishes Theorem \ref{main2}.
\end{proof}
\begin{proof}[Proof of Theorem \ref{LP1} and Theorem \ref{LP2}]
By the duality argument, it is enough to prove for $p <2$. In addition, we shall prove  only the case (1) of Theorem \ref{LP1}, the other cases will follow from the same argument. Suppose $p<2$ and $a^2 >C_{\beta}$. Since $\| T_j \|_{L^2 \rightarrow L^2} \lesssim \|L_{A_j}\|_{L^2 \rightarrow L^2}$, Proposition \ref{LA} yields 
\begin{eqnarray*}
\| T_j \|_{L^2 \rightarrow L^2} &\lesssim& 2^{j (\alpha -(n+\frac{1}{3})\beta)}
\end{eqnarray*}
On the other hand, Young's inequality gives
\begin{eqnarray*}
\| T_j \|_{L^1 \rightarrow L^1} &\lesssim& 2^{j (\alpha )}.
\end{eqnarray*}
Interpolating above two estimates we get
\begin{eqnarray*}
\| T_j \|_{L^p \rightarrow L^p} \lesssim 2^{j (\alpha 2(\frac{1}{p} -\frac{1}{2})  + (\alpha - (n+\frac{1}{3})\beta) 2(1-\frac{1}{p}))} = 2^{j (\alpha - 2(n+\frac{1}{3})(1-\frac{1}{p}))}.
\end{eqnarray*} 
Thus, we may sum the geometric series if $ \alpha - 2(n+\frac{1}{3})(1-\frac{1}{p})$. This completes the proof.
\end{proof}
In the next section, we shall briefly review on the theory related to the operators $L_{A_j}$ and $L_{B_j}$.  We will make use of geometric properties of the phase function $\rho(x,t)^{\beta}$ to prove Proposition \ref{LA} and Proposition \ref{LB}.
\section{$L^2$ estimates}
We begin with the $L^2 \rightarrow L^2 $ theory for oscillatory integral operators. The operators we are concern with are of the form
\begin{eqnarray*}
T^{\phi}_{\lambda} f (x) = \int_{\mathbb{R}^n} e^{i \lambda \phi(x,y)} a(x,y) f (y) dy,
\end{eqnarray*}
where $\phi \in C^{\infty}(\mathbb{R}^n \times \mathbb{R}^n)$ and $a \in C^{\infty}_c(\mathbb{R}^n \times \mathbb{R}^n)$. Suppose that the phase function $\phi$ satisfies $\det\left( \frac{\partial^2 \phi}{\partial x_i \partial y_j} \right) \neq 0$ on the support of $a$, we say that $\phi$ is non-degenerate. We say that $\phi$ is degenerate if there is some point $(x_0, y_0)$ where $\det \left.\left(\frac{\partial^2}{\partial x_i \partial y_j}\right)\right |_{(x_0,y_0)}$ equals to zero. For non-degenerate phases, we have the fundamental theorem of H\"ormander.
\begin{thm}[H\"ormander \cite{ho}] Suppose that the phase function $\phi$ is non-degenerate. Then we have
\begin{eqnarray*}
\|T^{\phi}_{\lambda} \|_{L^2 \rightarrow L^2} \lesssim \lambda^{-\frac{n}{2}} \quad 
 \forall \lambda \in [1, \infty).
\end{eqnarray*}
\end{thm}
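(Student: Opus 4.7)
The plan is to apply the $TT^*$ method together with a non-stationary phase argument. Writing $\|T^{\phi}_{\lambda}\|_{L^2 \to L^2}^2 = \|T^{\phi}_{\lambda}(T^{\phi}_{\lambda})^*\|_{L^2 \to L^2}$, I would first record that the operator $T^{\phi}_{\lambda}(T^{\phi}_{\lambda})^*$ has integral kernel
\[
K_{\lambda}(x,x') = \int_{\mathbb{R}^n} e^{i\lambda(\phi(x,y)-\phi(x',y))}\, a(x,y)\, \overline{a(x',y)}\, dy,
\]
and then establish the pointwise bound $|K_{\lambda}(x,x')| \lesssim_N (1+\lambda|x-x'|)^{-N}$ for every $N$. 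Once this is in hand, Schur's test gives $\sup_x \int |K_{\lambda}(x,x')|\, dx' \lesssim \lambda^{-n}$ (and symmetrically in $x'$), so $\|T^{\phi}_{\lambda}(T^{\phi}_{\lambda})^*\| \lesssim \lambda^{-n}$ and hence $\|T^{\phi}_{\lambda}\| \lesssim \lambda^{-n/2}$.

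The main analytic input is a lower bound on the gradient of the phase $\Phi(y) := \phi(x,y)-\phi(x',y)$. By the fundamental theorem of calculus in the first variable,
\[
\nabla_y \phi(x,y) - \nabla_y \phi(x',y) = \left( \int_0^1 \frac{\partial^2 \phi}{\partial x\, \partial y}(x'+t(x-x'),y)\, dt \right)(x-x').
\]
Since $\det\bigl(\partial^2\phi/\partial x\, \partial y\bigr) \neq 0$ on $\operatorname{supp} a$, a compactness/partition-of-unity reduction produces a uniform lower bound
\[
\bigl|\nabla_y\bigl(\phi(x,y)-\phi(x',y)\bigr)\bigr| \gtrsim |x-x'|
\]
on the support of $a(x,y)\overline{a(x',y)}$, after shrinking that support if necessary.

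With this lower bound, I would apply integration by parts in $y$ using the first-order operator $L = (i\lambda |\nabla_y \Phi|^2)^{-1}\, \overline{\nabla_y \Phi}\cdot \nabla_y$, which satisfies $L(e^{i\lambda \Phi}) = e^{i\lambda \Phi}$. Applying $L^t$ a total of $N$ times to the amplitude and using the usual bounds on derivatives of $|\nabla_y \Phi|^{-2}$ (valid because $|\nabla_y\Phi| \gtrsim |x-x'|$ and higher $y$-derivatives of $\Phi$ are bounded), one gets
\[
|K_{\lambda}(x,x')| \lesssim_N (\lambda |x-x'|)^{-N}.
\]
Interpolating this with the trivial bound $|K_{\lambda}(x,x')| \lesssim 1$ and then changing variables $u = \lambda(x-x')$ produces the desired $\lambda^{-n}$ on the Schur integrals.

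The main obstacle is the uniform-in-$(x,x')$ quantitative non-degeneracy; that is where the partition-of-unity reduction and the use of continuity of $\partial^2\phi/\partial x\,\partial y$ enter. Once that step is made rigorous, the remainder is a standard integration-by-parts argument in the oscillatory variable.
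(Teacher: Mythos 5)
The paper does not prove this statement at all: it is recalled verbatim as H\"ormander's theorem with a citation to \cite{ho} and used as a black box, so there is no in-paper proof to compare against. Your argument is the standard $TT^*$ proof of that classical result (essentially the one in Stein's book, cited as \cite{St} in the paper), and it is correct: the kernel bound $|K_\lambda(x,x')|\lesssim_N (1+\lambda|x-x'|)^{-N}$ together with Schur's test yields $\|T^{\phi}_{\lambda}(T^{\phi}_{\lambda})^{*}\|\lesssim \lambda^{-n}$ and hence the claim. The only step requiring the care you already flag is the uniform lower bound $|\nabla_y(\phi(x,y)-\phi(x',y))|\gtrsim |x-x'|$, which holds only for $x$ near $x'$; one must therefore first split $a$ by a finite partition of unity (using compactness of $\operatorname{supp} a$) into pieces of small $x$-support and treat each piece separately, exactly as you indicate. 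With that reduction made explicit, the non-stationary phase estimate and the Schur integration are routine, and the proposal is a complete proof.
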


This theorem gives sharp decaying rate of the norm $\|T_{\lambda}^{\phi}\|_{L^2\rightarrow L^2}$ in terms of $\lambda$. However, the phase functions of our operators $L_{A_j}$ and $L_{B_j}$ can become degenerate according to the values of $a$ and $\beta$ (see Lemma \ref{det} and Lemma \ref{factor}). For a degenerate phase function $\phi$, the optimal number $\kappa_{\phi}$ for which the inequality $\|T_{\lambda}\|_{L^2 \rightarrow L^2} \lesssim \lambda^{-\kappa_{\phi}}$ holds would be less than $\frac{n}{2}$. The number $\kappa_{\phi}$'s are  related to the type of fold of the phase $\phi$ (see Definition \ref{def}). For phases whose types of folds are $\leq 3$, the sharp numbers $\kappa_{\phi}$ were obtained by Greenleaf-Seeger \cite{GR} and Pan-Sogge \cite{PS}. We shall use the results. The sharp results for folding types $\leq 3$ in \cite{GR} are the best known results and there are no optimal results for folding types $>3$ except some special cases established by Cuccagna \cite{cu}. 

It is well-known that the decaying property is strongly related to the geometry of the canonical relation,
\begin{eqnarray}\label{relation1}
C_{\phi} = \{ (x, \partial_x \phi(x,y), y, - \partial_y \phi(x,y) )~ ; ~x, y \in \mathbb{R}^n \} \subset T^{*} (\mathbb{R}^n_x) \times T^{*} (\mathbb{R}^n_y).
\end{eqnarray}

\begin{defn}\label{def}
Let $M_1$ and $M_2$ be smooth manifolds of dimension $n$, and let $f : M_1 \rightarrow M_2$ be a smooth map of corank $\leq 1$. Let $S = \{ P \in M_1 : \textrm{rank} (Df)  < n ~\textrm{at}~ P\}$ be the singular set of $f$. Then we say that $f$ has a $k-$type fold at a point $P_0 \in S$ if
\begin{enumerate}
\item $\textrm{rank}(Df)|_{P_0} = n-1$,
\item $\det (Df)$ vanishes of $k$ order in the null direction at $P_0$.
\end{enumerate}
Here, the null direction is the unique direction vector $v$ such that $(D_v f)|_{P_0} =0$.
\end{defn}
\
Now we consider the two projection maps
\begin{eqnarray}\label{relation2}
\pi_L : C_{\Phi} \rightarrow T^{*} ( \mathbb{R}^n_x) \quad \textrm{and} \quad \pi_R : C_{\Phi} \rightarrow T^{*} (\mathbb{R}^n_y).
\end{eqnarray}
\begin{prop}[\cite{GR},\cite{PS}]\label{degenerate}
Suppose that the projection maps $\pi_L$ and $\pi_R$ have 1-type folds (Whitney folds) singularities, then 
\begin{eqnarray*}
\| T_{\lambda} f \|_{L^2(\mathbb{R}^n)} \lesssim \lambda^{-\frac{(n-1)}{2}-\frac{1}{3}} \| f \|_{L^2(\mathbb{R}^n)} \quad \forall \lambda \in [1, \infty).
\end{eqnarray*}
If the projection maps $\pi_L$ and $\pi_R$ have 2-type folds singularities, then 
\begin{eqnarray*}
\| T_{\lambda} f \|_{L^2(\mathbb{R}^n)} \lesssim \lambda^{-\frac{(n-1)}{2}-\frac{1}{4}} \| f \|_{L^2(\mathbb{R}^n)} \quad \forall \lambda \in [1, \infty).
\end{eqnarray*} 
\end{prop}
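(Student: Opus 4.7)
The plan is to follow the $TT^*$ approach combined with a dyadic resolution of the singular variety $\Sigma = \{\det \partial_{xy}^2\phi = 0\}$, in the spirit of Greenleaf--Seeger \cite{GR} and Pan--Sogge \cite{PS}. One forms
\[
T_\lambda T_\lambda^* f (x) = \int K_\lambda(x,z) f(z)\, dz, \qquad K_\lambda(x,z)=\int e^{i\lambda[\phi(x,y)-\phi(z,y)]} a(x,y)\overline{a(z,y)}\,dy,
\]
so that the oscillation in $y$ is governed by $\pi_R$; the dual composition $T_\lambda^* T_\lambda$ brings in $\pi_L$, and the two fold hypotheses enter symmetrically through this pair.

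My first step would be to choose coordinates adapted to $\Sigma$ near a reference fold point, placing the unique null direction of the mixed Hessian along a single variable $s$; the fold hypothesis of order $\ell\in\{1,2\}$ then gives the normal form $|\det\partial_{xy}^2\phi|\sim |s|^{\ell}$ locally. Next, I would dyadically decompose the amplitude as $a=\sum_k a_k$ with $\supp a_k\subset\{|s|\sim 2^{-k}\}$ and analyse each piece $T_{\lambda,k}$ via two competing estimates: a trivial Cauchy--Schwarz bound exploiting the $2^{-k}$-width of the strip, and a rescaled estimate obtained by setting $s\mapsto 2^{-k}\tilde s$. The rescaling restores non-degenerate oscillation in the transverse $(n-1)$ variables, so that H\"ormander's theorem applies there to yield a $\lambda^{-(n-1)/2}$ factor, while the remaining one-dimensional integral along $s$ is controlled by a van der Corput bound of an order determined by $\ell$.

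Finally, I would sum over $k$ by choosing the crossover scale at which the trivial and rescaled estimates coincide; this balancing returns the stated gains $1/3$ for Whitney folds ($\ell=1$) and $1/4$ for cusps ($\ell=2$). The main obstacle is the careful bookkeeping of the Jacobian weights and phase rescalings, together with the verification that both $\pi_L$ and $\pi_R$ being folds is genuinely needed for the full gain; a one-sided fold alone would yield only half the improvement, and closing the two-sided argument requires running the dyadic decomposition compatibly with both projections. The author's later remark that sharp estimates are unavailable for folds of order $>3$ is a reflection of this balance breaking down: once $\ell$ is too large, the rescaled operators lose uniform non-degeneracy and the H\"ormander/van der Corput pairing no longer closes the estimate.
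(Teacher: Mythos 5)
The paper does not prove this proposition at all: it is imported verbatim as a black box from Greenleaf--Seeger \cite{GR} and Pan--Sogge \cite{PS}, and the text's only contribution is to verify the fold hypotheses for the specific phase $\rho((x,t)\cdot(y,s)^{-1})^{\pm\beta}$ later in Section 3. So there is no in-paper argument to compare against; what you have written is a reconstruction of the proof in the cited references, and as such it follows the standard route: dyadic decomposition in the distance $\delta\sim 2^{-k}$ to the singular variety, a size estimate on each strip, a nondegenerate estimate degraded by $|\det\partial^2_{xy}\phi|^{-1/2}\sim\delta^{-\ell/2}$, and a balance at $\delta\sim\lambda^{-1/(\ell+2)}$, which indeed reproduces the gains $1/3$ and $1/4$. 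The numerology is right, and you correctly identify that the two-sided hypothesis is what makes the strip $\delta$-thin in both the $x$- and $y$-projections.

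As a proof, however, the sketch leaves the genuinely hard step unsupplied. First, for the balancing to close, the ``trivial'' strip estimate must also carry the factor $\lambda^{-(n-1)/2}$ coming from nondegenerate oscillation transverse to the null direction; a bare Cauchy--Schwarz/Schur bound of size $2^{-k}$ alone does not sum to the claimed exponent, so you need H\"ormander's theorem (or an $(n-1)$-dimensional operator van der Corput lemma) applied \emph{on each strip}, uniformly in $k$. Second, the operator does not literally factor as an $(n-1)$-dimensional nondegenerate piece tensored with a one-dimensional van der Corput integral in $s$; the rescaled phase mixes the variables, and the actual mechanism in \cite{PS} and \cite{GR} is an operator-valued van der Corput lemma requiring only a lower bound on $\det\partial^2_{xy}\phi$ together with uniform upper bounds on finitely many derivatives of the rescaled phase and amplitude -- verifying that uniformity after the anisotropic rescaling $s\mapsto 2^{-k}\tilde s$ is the technical core of the argument and is precisely what fails for high-order degeneracies. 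Third, the region $\delta\lesssim\lambda^{-1/(\ell+2)}$ below the crossover scale must be handled by the size estimate alone (summed as a single block), and the type-$2$ case in \cite{GR} requires a finer analysis than a formal substitution $\ell=2$. One small correction: a one-sided Whitney fold yields $\lambda^{-(n-1)/2-1/4}$, i.e.\ a loss of $1/4$ versus $1/6$ relative to the nondegenerate exponent $-n/2$, not ``half the improvement.'' None of this affects the paper, which uses the proposition only as a cited input.
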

In order to use Proposition \ref{degenerate},  we shall study the projection maps \eqref{relation2} associated to the phase function of the operators $L_{A_{j}}$ and $L_{B_{j}}$. Recall that $\rho(x,t) = (|x|^4 + t^2)^{1/4}$ and the phase function $\phi$ of the integral operators $L_{A_j}$ and $L_{B_j}$  is 
\begin{eqnarray*}
\phi(x,t,y,s) = \rho^{-\beta} \left( (x,t)\cdot (y,s)^{-1}\right).
\end{eqnarray*}
To write the group law explicitly, we write $x = (x^1, x^2)$ and $y = (y^1, y^2)$ with $x^j, y^j \in \mathbb{R}^n$. Set $\Phi(x,t) = \rho(x,t)^{-\beta}$. Then 
\begin{eqnarray}\label{phi}
\phi(x, t, y, s) = \Phi \left( x^1 - y^1, x^2 - y^2, t - s - 2a (x^1 y^2 -x^2 y^1)\right).
\end{eqnarray}
For notational purpose set $t = x_{2n+1}$ and $s = y_{2n+1}$. To determine whether the phase function $\Phi$ is non-degenerate, we need to calculate the determinant of the matrix,
\begin{eqnarray*}
H=\left( \frac{\partial^2  \phi(x,t,y,s) }{\partial{y_i} \partial{x_j}} \right).
\end{eqnarray*}
The determinant is calculated in \textcolor{blue}{Laghi-Lyall} \cite{laghi}. However we give a somewhat simpler computation by considering the matrix $L$ associated naturally with the matrix $H$ (see below), which will also be useful in Lemma \ref{rank} and the proof of Proposition \ref{LA} and Proposition \ref{LB}.
\

For simplicity, we write $(\textbf{x}, \textbf{t}) = (x,t) \cdot (y,s)^{-1}$. By the Chain Rule, for $1 \leq i, j \leq n$, we have
\begin{eqnarray*}
\frac{\partial}{\partial x_j} \phi(x,t,y,s) = \left[\partial_{j} + 2 a y_{n+j} \partial_{2n+1} \right] \Phi (\textbf{x}, \textbf{t}),
\\
\frac{\partial}{\partial x_{j+n}} \phi(x,t,y,s) = \left[\partial_{{j+n}} - 2 a y_j \partial_{2n+1} \right] \Phi (\textbf{x}, \textbf{t}).
\end{eqnarray*}
Using the Chain Rule once more, we get
\begin{equation}
\begin{split}
\frac{\partial}{\partial y_i} \frac{\partial}{\partial x_j} \phi (x,t,y,s) = & \left[( \partial_{i} + 2 a x_{n+i} \partial_{2n+1} ) (\partial_{j} + 2a y_{n+j} \partial_{2n+1} )\right] \Phi (\textbf{x}, \textbf{t}),
\\
\frac{\partial}{\partial y_{n+i}} \frac{\partial}{\partial x_j}\phi(x,t,y,s) = & \left[(\partial_{{n+i}} - 2 a x_i \partial_{2n+1} ) (\partial_{j} + 2a y_{n+j} \partial_{2n+1} )\right] \Phi (\textbf{x}, \textbf{t})+ \left[2a \delta_{ij} \partial_{2n+1} \right]\Phi(\textbf{x}, \textbf{t}),
\\
\frac{\partial}{\partial{y_i}} \frac{\partial}{{n+j}}\phi(x,t,y,s) = &\left[ (\partial_{i}+ 2a x_{n+i} \partial_{2n+1} )(\partial_{{n+j}} - 2a y_{j} \partial_{2n+1} ) \right]\Phi (\textbf{x}, \textbf{t}) - \left[ 2 a \delta_{ij}\partial_{2n+1} \right]\Phi(\textbf{x}, \textbf{t}),
\\
\frac{\partial}{\partial{y_{n+i}}} \frac{\partial}{\partial x_{n+j}}\phi(x,t,y,s) = & \left[(\partial_{{n+i}} - 2a x_i \partial_{2n+1} ) (\partial_{{n+j}} - 2 a y_j \partial_{2n+1} ) \right]\Phi(\textbf{x}, \textbf{t}).
\end{split}
\end{equation}
Define
\begin{eqnarray*}
A_a (y) = \begin{pmatrix} I & 2a J y \\ 0 & 1 \end{pmatrix}, \qquad J = \begin{pmatrix} 0 & I_n \\ - I_n & 0 \end{pmatrix}. 
\end{eqnarray*}
Then we have 
\begin{equation}\label{35}
\begin{split}
H(x,t,y,s) =& A_a (x) \left( \partial_{i} \partial_{j} \Phi \right) (\textbf{x}, \textbf{t})~A_a (y)^T + 2 a (\partial_{2n+1} \Phi)(\textbf{x}, \textbf{t})  \begin{pmatrix} J & 0 \\ 0 &0\end{pmatrix}
\\
=& A_a (x)  \left[ (\partial_{i} \partial_{j} \Phi ) + 2 a (\partial_{2n+1} \Phi) \begin{pmatrix} J & 0 \\ 0 &0 \end{pmatrix} \right] (\textbf{x}, \textbf{t}) ~A_a (y)^{T},
\end{split}
\end{equation}
where the second equality holds because {\footnotesize$A_a (x)  \begin{pmatrix}J&0\\0&0\end{pmatrix} A_a (y)^{T} = \begin{pmatrix}J&0\\0&0\end{pmatrix}$.} Set
\begin{eqnarray}\label{36}
L(x,t,y,s) = \left[ (\partial_{i} \partial_{j} \Phi ) + 2 a ( \partial_{2n+1} \Phi) \begin{pmatrix} J & 0 \\ 0 &0 \end{pmatrix} \right] (\textbf{x}, \textbf{t}).
\end{eqnarray}
Thus, to study the matrix $H$, it is enough to analyze the matrix $L$. Moreover we have  $\det(A_a (x)) = \det(A_a (y))=1$ and it implies that $ \det (H(x,t,y,s)) = \det ( L(x,t,y,s)).$ Therefore it is enough to calculate the determinant of $L$.

To find \eqref{36} we calculate the Hessian matrix of $\Phi$. For $1 \leq i, j \leq 2n$,

\begin{eqnarray*}
\partial_{j} \Phi (\textbf{x}
,\textbf{t}) & = &- \frac{\beta}{4} (|{\textbf{x}}
|^4 + \textbf{t}^2)^{-\frac{\beta}{4} -1} ( 4 \textbf{x}
_j |\textbf{x}
|^2),
\\
\partial_{2n+1} \Phi(\textbf{x}
,\textbf{t}) & = & - \frac{\beta}{4}(|\textbf{x}
|^4 + \textbf{t}^2)^{-\frac{\beta}{4}-1}(2\textbf{t}),
\end{eqnarray*}
and
\begin{eqnarray*}
\partial_{i} \partial_{j} \Phi(\textbf{x}
,\textbf{t})&=& \beta(|\textbf{x}
|^4 + \textbf{t}^2)^{-\frac{\beta}{4} -2} \left[ (\beta+4) |\textbf{x}
|^4 - 2 (|\textbf{x}
|^4+ \textbf{t}^2) \right] \textbf{x}
_i \textbf{x}
_j - \beta(|\textbf{x}
|^4 + \textbf{t}^2)^{-\frac{\beta}{4}-1} \delta_{ij} |\textbf{x}
|^2,
\\
\partial_{i} \partial_{2n+1} \Phi(\textbf{x}
,\textbf{t})&=& \beta(\beta+4) (|\textbf{x}
|^4 + \textbf{t}^2)^{-\frac{\beta}{4}-2} |\textbf{x}
|^2  \textbf{x}_i \cdot \frac{\textbf{t}}{2},
\\
\partial_{2n+1}^2 \Phi(\textbf{x}
,\textbf{t}) &=& \beta (\beta+4) (|\textbf{x}
|^4 + \textbf{t}^2 )^{-\frac{\beta}{4} -2}  \frac{\textbf{t}}{2} \cdot \frac{\textbf{t}}{2} - \beta (|\textbf{x}
|^4 + \textbf{t}^2)^{-\frac{\beta}{4}-1} \frac{1}{2}.
\end{eqnarray*}
Set $D = (|\textbf{x}
|^2 \textbf{x}
, \frac{\textbf{t}}{2} )^{T}$. Then the above computations show that
\begin{equation}\label{37}
\begin{split}
~&\left[ (\partial_{i} \partial_{j} \Phi) + 2 a (\partial_{2n+1} \Phi) \begin{pmatrix} J & 0 \\ 0&0 \end{pmatrix} \right]({ \textbf{x}
,\textbf{t} })
\\=& \beta(\beta + 4) (|\textbf{x}
|^4 + \textbf{t}^2 )^{-\frac{\beta}{4} -2} D \cdot D^{T} - \beta (|\textbf{x}
|^4 + \textbf{t}^2)^{-\frac{\beta}{4} -1} \begin{pmatrix} |\textbf{x}
|^2 I + a\textbf{t} J + 2 \textbf{x}
 \cdot \textbf{x}
^{T} & 0 \\ 0 & \frac{1}{2} \end{pmatrix}
\\=&-\beta (|\textbf{x}
|^4 + \textbf{t}^2 )^{-\frac{\beta}{4}-1} ( E + R ),
\end{split}
\end{equation}
where we set 

\begin{eqnarray}\label{BKE} B = |\textbf{x}|^2 I + a \textbf{t}J,\quad K = \textbf{x}\cdot \textbf{x}^T,\quad E =\begin{pmatrix} B+2K  & 0 \\ 0 & \frac{1}{2}\end{pmatrix} \quad\textrm{and} \quad R= - \frac{(\beta+4)}{|\textbf{x}
|^4 + \textbf{t}^2} D\cdot D^{T}.
\end{eqnarray}
Then, from \eqref{36} and \eqref{37} we get
\begin{eqnarray}\label{sowe}
L(x,t,y,s) = [- \beta(|\textbf{x}
|^4 + \textbf{t}^2 )^{-\frac{\beta}{4}-1} (E + R)]~(\textbf{x}, \textbf{t}).
\end{eqnarray}
\begin{lem}\label{det} We have
\begin{eqnarray*}
\det H (x,t,y,s) = F( (x,t) \cdot (y,s)^{-1}),
\end{eqnarray*}
where $F(x,t) = c_{a,\beta} (|x|^4 + a^2 t^2)^{m_1} (|x|^4 + t^2 )^{m_2} f (x,t)$  for some $m_1,m_2, c_{a,\beta} \in \mathbb{R}$ and  $f(x,t) = 2(\beta+1)|x|^8 + (3(\beta+2) - 2a^2)|x|^4 t^2 + (\beta+2) a^2 t^4$.
\end{lem}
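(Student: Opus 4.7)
The plan is to exploit the structure already derived: since $\det(A_a(x))=\det(A_a(y))=1$, we have $\det H = \det L$, and by \eqref{sowe}, $\det L$ reduces to computing $\det(E+R)$ up to the scalar prefactor $(-\beta)^{2n+1}(|\mathbf{x}|^4+\mathbf{t}^2)^{-(2n+1)(\beta/4+1)}$. Since $R$ is (minus a scalar multiple of) the rank-one outer product $DD^T$, the matrix determinant lemma applies and reduces everything to computing $\det(E)$ together with the scalar $D^T E^{-1} D$.

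First I would compute $\det(E)$. Since $E$ is block diagonal with blocks $B+2K$ and $1/2$, it suffices to find $\det(B+2K)$ where $B=|\mathbf{x}|^2 I + a\mathbf{t}J$ and $K=\mathbf{x}\mathbf{x}^T$ is rank one. Using $J^T=-J$ and $J^2=-I$, one sees that $B(|\mathbf{x}|^2I-a\mathbf{t}J)=(|\mathbf{x}|^4+a^2\mathbf{t}^2)I$, giving both
\begin{equation*}
\det B=(|\mathbf{x}|^4+a^2\mathbf{t}^2)^n,\qquad B^{-1}=\frac{1}{|\mathbf{x}|^4+a^2\mathbf{t}^2}(|\mathbf{x}|^2 I-a\mathbf{t}J).
\end{equation*}
The antisymmetry of $J$ kills the cross term in $\mathbf{x}^T B^{-1}\mathbf{x}$, yielding the clean value $\mathbf{x}^T B^{-1}\mathbf{x}=|\mathbf{x}|^4/(|\mathbf{x}|^4+a^2\mathbf{t}^2)$. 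Then the matrix determinant lemma gives $\det(B+2K)=(|\mathbf{x}|^4+a^2\mathbf{t}^2)^{n-1}(3|\mathbf{x}|^4+a^2\mathbf{t}^2)$, hence $\det E=\tfrac12(|\mathbf{x}|^4+a^2\mathbf{t}^2)^{n-1}(3|\mathbf{x}|^4+a^2\mathbf{t}^2)$.

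Next I apply the matrix determinant lemma once more to get
\begin{equation*}
\det(E+R)=\det(E)\Bigl(1-\tfrac{\beta+4}{|\mathbf{x}|^4+\mathbf{t}^2}\,D^T E^{-1}D\Bigr).
\end{equation*}
Using the block form of $E^{-1}$, the quantity $D^T E^{-1}D$ decomposes as $|\mathbf{x}|^4\mathbf{x}^T(B+2K)^{-1}\mathbf{x}+\tfrac{\mathbf{t}^2}{2}$. For the first piece I would apply Sherman–Morrison to $B+2\mathbf{x}\mathbf{x}^T$; the intermediate scalar $\mathbf{x}^T B^{-1}\mathbf{x}$ has already been computed, and the algebra collapses to $\mathbf{x}^T(B+2K)^{-1}\mathbf{x}=|\mathbf{x}|^4/(3|\mathbf{x}|^4+a^2\mathbf{t}^2)$. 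Substituting back, the factor $3|\mathbf{x}|^4+a^2\mathbf{t}^2$ in $\det E$ cancels against the denominator arising from $D^T E^{-1}D$, leaving the clean expression
\begin{equation*}
\det(E+R)=\frac{(|\mathbf{x}|^4+a^2\mathbf{t}^2)^{n-1}}{4(|\mathbf{x}|^4+\mathbf{t}^2)}\bigl[2(|\mathbf{x}|^4+\mathbf{t}^2)(3|\mathbf{x}|^4+a^2\mathbf{t}^2)-(\beta+4)(2|\mathbf{x}|^8+3|\mathbf{x}|^4\mathbf{t}^2+a^2\mathbf{t}^4)\bigr].
\end{equation*}

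The final step is purely algebraic: expanding the bracket and collecting terms in $|\mathbf{x}|^8$, $|\mathbf{x}|^4\mathbf{t}^2$, $\mathbf{t}^4$ produces exactly $-f(\mathbf{x},\mathbf{t})$ with $f$ as in the statement. Multiplying by the scalar prefactor from \eqref{sowe} and evaluating at $(\mathbf{x},\mathbf{t})=(x,t)\cdot(y,s)^{-1}$ yields $\det H=F((x,t)\cdot(y,s)^{-1})$ with $m_1=n-1$, $m_2=-(2n+1)(\beta/4+1)-1$, and $c_{a,\beta}=\beta^{2n+1}/4$. The only place where one has to be careful is the arithmetic in the numerator above: it is short but one sign error will corrupt the coefficient $(3(\beta+2)-2a^2)$ that later controls when $a^2=C_\beta$, so I expect this bookkeeping to be the main place to double-check.
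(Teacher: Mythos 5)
Your proposal is correct, and all the intermediate values check out against the paper: $\det B=(|\mathbf{x}|^4+a^2\mathbf{t}^2)^n$, $\det(B+2K)=(|\mathbf{x}|^4+a^2\mathbf{t}^2)^{n-1}(3|\mathbf{x}|^4+a^2\mathbf{t}^2)$ as in \eqref{det2}, and the final bracket does collect to $-f(\mathbf{x},\mathbf{t})$ with the right coefficients $-2(\beta+1)$, $-(3(\beta+2)-2a^2)$, $-(\beta+2)a^2$. The overall strategy is the same as the paper's (reduce $\det H$ to $\det(E+R)$ via \eqref{sowe} and exploit twice that $K=\mathbf{x}\mathbf{x}^T$ and $R$ are rank one), but the rank-one identity you use is different: the paper uses the additive expansion \eqref{convention}, $\det(P+Q)=\det P+\sum_j\det(p_1,\dots,q_j,\dots,p_m)$, and evaluates each resulting determinant row by row with no inversions, whereas you use the multiplicative matrix determinant lemma together with Sherman--Morrison, which requires the explicit inverse $B^{-1}=(|\mathbf{x}|^4+a^2\mathbf{t}^2)^{-1}(|\mathbf{x}|^2I-a\mathbf{t}J)$ coming from $BB^T=(|\mathbf{x}|^4+a^2\mathbf{t}^2)I$. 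Your route produces cleaner closed-form scalars ($\mathbf{x}^TB^{-1}\mathbf{x}$ and $\mathbf{x}^T(B+2K)^{-1}\mathbf{x}$) at the cost of needing $B$ and $E$ invertible; this holds on the relevant region $\rho(\mathbf{x},\mathbf{t})\sim 1$ with $a\neq 0$ (and in any case the identity extends by continuity since both sides are rational), so it is not a gap, but it is worth one sentence. The paper's additive route avoids inverses entirely. Your explicit exponents $m_1=n-1$, $m_2=-(2n+1)(\beta/4+1)-1$ and constant $\beta^{2n+1}/4$ are more information than the lemma asserts and are consistent with the paper's computation.
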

\begin{proof} 
We write $(\mathbf{x},\mathbf{t}) = (x,t)\cdot (y,s)^{-1}$ again. In view of \eqref{35}, \eqref{36} and \eqref{sowe}, it is enough to show that
\begin{eqnarray*}
\det [-\beta(|\textbf{x}|^4+ \textbf{t}^2)^{-\frac{\beta}{4}-1} (E + R) ] = F (\textbf{x},\textbf{t}).
\end{eqnarray*}
Considering the form of the function $F$ given, we only need to compute $\det (E+R)$. From \eqref{BKE} we have
\begin{eqnarray*}
E + R = \begin{pmatrix} B + 2 K & 0 \\ 0 & \frac{1}{2} \end{pmatrix} - \frac{(\beta+4)}{|\textbf{x}|^4 + \textbf{t}^2} D \cdot D^{T}.
\end{eqnarray*}
\
For notational convenience, we shall  use lower-case letters $f_1,\dots,f_m$ to denote the  rows of a given $m \times m$ matrix $F$. 
Notice that $D  D^{T} $ is of rank $1$ and we have the following equality
\begin{eqnarray}\label{convention}
\det ( P + Q) = \det(P) + \sum_{j=1}^{m} \det ( p_1^T , \dots, p_{j-1}^T, q_j^T, p_{j+1}^T,\dots ,p_{m}^T ), 
\end{eqnarray}
for any $m \times m$ matrices $P$ and $Q$ with rank $Q =1$. Recall that $B = |\textbf{x}|^2 I + a\textbf{t} J$ and $K=\textbf{x}\cdot \textbf{x}^T$, then direct calculations show that  
\begin{eqnarray}\label{detB}
\det (B) = (|\textbf{x}|^4 + a^2 \textbf{t}^2)^n
\end{eqnarray}
and 
\begin{equation}\label{detK}
\begin{split}
&\sum_{j=1}^{n} \textbf{x}_j \det \begin{pmatrix}  b_1^T, \cdots,  b_{j-1}^T , k_j^T,  b_{j+1}^T, \cdots, b_{2n}^T \end{pmatrix}  + \sum_{j=1}^{n} \textbf{x}_{j+n} \det \begin{pmatrix} b_1^T, \cdots, b_{j+n-1}^T,  k_{j+n}^T, b_{j+n+1}^T, \cdots, b_{2n}^T\end{pmatrix}  
\\
=& \sum_{j=1}^{n} \textbf{x}_j ( |\textbf{x}|^2 \textbf{x}_j + \textbf{x}_{n+j} a \textbf{t}) (|\textbf{x}|^4 + a^2 \textbf{t}^2)^{n-1} + \sum_{j=1}^{n} \textbf{x}_{j+n} (|\textbf{x}|^2 \textbf{x}_{j+n} - \textbf{x}_j a \textbf{t} ) (|\textbf{x}|^4 + a^2 \textbf{t}^2)^{n-1}
\\
=& (|\textbf{x}|^4 + a^2 \textbf{t}^2)^{n-1} |\textbf{x}|^4.
\end{split}
\end{equation}
Thus, from \eqref{convention}, \eqref{detB} and \eqref{detK}, we get
\begin{equation}\label{det2}
\begin{split}
\det (B+2K) =& (|\textbf{x}|^4 + a^2 \textbf{t}^2)^{n} + 2 |\textbf{x}|^4 (|\textbf{x}|^4 +a^2 \textbf{t}^2)^{n-1} 
\\
=& (|\textbf{x}|^4+a^2 \textbf{t}^2)^{n-1}(3|\textbf{x}|^4 + a^2 \textbf{t}^2).
\end{split}
\end{equation}
Using  \eqref{convention} once again, we obtain
\begin{eqnarray*}
\det(E+R) &=& \det(E) + \frac{1}{2} \sum_{j=1}^{2n} \det \begin{pmatrix} e_1 \\ \vdots \\ e_{j-1} \\ r_j \\ e_{j+1} \\ \vdots \\ e_{2n} \end{pmatrix} 
+ \det \begin{pmatrix} e_1 \\ \vdots \\ e_{2n} \\ r_{2n+1} \end{pmatrix}
\\
&=:& S_1 + S_2 + S_3. 
\end{eqnarray*}
From  \eqref{det2} we have
\begin{eqnarray*}
S_1 = \det \begin{pmatrix} B + 2K & 0 \\ 0& \frac{1}{2}\end{pmatrix} = \frac{1}{2} \det ( B + 2K)  = \frac{1}{2} (|\textbf{x}|^4 + a^2 \textbf{t}^2)^{n-1} (3|\textbf{x}|^4 + a^2 \textbf{t}^2).
\end{eqnarray*}
Using $\textrm{rank} ~K =1$ we get
\begin{eqnarray*}
\det \begin{pmatrix} e_1 \\ \vdots \\ e_{j-1} \\ r_j \\ e_{j+1} \\ \vdots \\ e_{2n} \end{pmatrix} = \det \begin{pmatrix} b_1 + 2 k_1 \\ \vdots \\ b_{j-1} + 2 k_{j-1} \\ \frac{-(\beta+4)|\textbf{x}|^4}{|\textbf{x}|^4 +\textbf{t}^2} k_j \\ b_{j+1} + 2 k_{j+1} \\ \vdots \\ b_{2n} + 2 k_{2n} \end{pmatrix} = - \frac{(\beta+4)|\textbf{x}|^4}{|\textbf{x}|^4 + \textbf{t}^2}  \det  \begin{pmatrix} b_1 \\ \vdots \\ b_{j-1} \\ k_j \\ b_{j+1} \\ \vdots \\ b_{2n} \end{pmatrix}. 
\end{eqnarray*}
Therefore, 
\begin{eqnarray*}
S_2 &=& - \frac{1}{2} \left(\frac{(\beta+4)|\textbf{\textbf{x}}|^4}{|\textbf{\textbf{x}}|^4 + \textbf{\textbf{t}}^2} \right) |\textbf{x}|^4 (|\textbf{x}|^4 + a^2 \textbf{t}^2)^{n-1}.
\end{eqnarray*}
Finally, 
\begin{eqnarray*}
S_3 &=& \det \begin{pmatrix} B + 2K & 0 \\ * & - \frac{\beta+4}{|\textbf{x}|^4 + \textbf{t}^2}\frac{\textbf{t}^2}{4} \end{pmatrix} = - \frac{\beta+4}{|\textbf{x}|^4+\textbf{t}^2} \frac{\textbf{t}^2}{4} \det (B+2K) 
\\
&=& -\frac{\beta+4}{|\textbf{x}|^4 + \textbf{t}^2} \frac{\textbf{t}^2}{4} (|\textbf{x}|^4 +a^2 \textbf{t}^2)^{n-1} (3|\textbf{x}|^4 +a^2 \textbf{t}^2).
\end{eqnarray*}
Adding all these terms together, we get
\begin{eqnarray*}
\det (E+R)= p (|\textbf{x}|^4 + a^2 \textbf{t}^2)~ q (|\textbf{x}|^4 + \textbf{t}^2) ~f(\textbf{x},\textbf{t}),
\end{eqnarray*}
where $p(r) = c_p r^{m_1}$, $q(r) = r^{m_2}$ for some $m_1, m_2, c_p \in \mathbb{R}$ and 
\begin{eqnarray*}
f(\textbf{x},\textbf{t}) = 2 (\beta+1) |\textbf{x}|^8 + ( 3 (\beta+2) - 2a^2) |\textbf{x}|^4 \textbf{t}^2 + (\beta+2)a^2 \textbf{t}^4. 
\end{eqnarray*} 
The proof is complete.
\end{proof}
Now, we should determine when the determinant of $H(x,t,y,s)$ can be zero for some values $(x,t,y,s)$ with $\rho\left( (x,t)\cdot (y,s)^{-1}\right) \sim 1$. Furthermore, to determine the type of folds in the degenerate cases, it is crucial to know the shape of the factorization. 
\begin{lem}\label{factor} There are nonzero constants $\gamma, c,c_1,c_2,c_3$ with $c_1 \neq c_2$ and $c_3 >0$ that are determined by $\beta$ and $a$ such that:
\
\begin{itemize}
\item Case 1:
\begin{itemize}
\item[$\cdot$] If $\beta \in (-1,0) \cup (0,\infty)$ and ${a^2} < C_{\beta}$, then $f(x,t) > 0$.
\item[$\cdot$] If $\beta \in (-1,0) \cup (0,\infty)$ and ${a^2} = C_{\beta}$, then $f(x,t) = \gamma ( |x|^2 - c t^2)^2$.
\item[$\cdot$] If $\beta \in (-1,0) \cup (0,\infty)$ and ${a^2} > C_{\beta}$, then $f(x,t) = \gamma ( |x|^2 - c_1 t)(|x|^2 + c_1 t)(|x|^2 - c_2 t)(|x|^2 + c_2 t)$.
\end{itemize}
\item Case 2:
\begin{itemize}
\item[$\cdot$] If $\beta \in (-2,-1)$, then $f(x,t) = \gamma (|x|^2- c_1 t) (|x|^2 + c_1 t) (|x|^4 + c_3 t^2)$.
\end{itemize}
\item Case 3:
\begin{itemize}
\item[$\cdot$] If $\beta \in (\infty, -2)$, then $ f(x,t) < 0$.
\end{itemize}
\end{itemize}
\end{lem}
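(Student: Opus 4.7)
The plan is to reduce the multivariable question to an elementary one-variable discriminant analysis. Setting $u = |x|^4$ and $v = t^2$ turns $f$ into the homogeneous quadratic
\[
P(u,v) = 2(\beta+1)u^2 + (3(\beta+2) - 2a^2)uv + (\beta+2)a^2 v^2,
\]
so that $f(x,t) = P(|x|^4, t^2)$. Every factorization of $P$ over $\mathbb{R}[u,v]$ lifts to a factorization of $f$ by splitting $|x|^4 - r t^2 = (|x|^2 - \sqrt{r}\,t)(|x|^2 + \sqrt{r}\,t)$ when $r > 0$, and keeping $|x|^4 + |r| t^2$ intact when $r < 0$. Thus the lemma reduces to understanding $P(\cdot, 1)$, a single-variable quadratic.

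The key computation is the discriminant of $P$ viewed as a quadratic in $u$. A direct expansion gives $\Delta/v^2 = 4a^4 - 4(\beta+2)(2\beta+5)a^2 + 9(\beta+2)^2$; solving this as a quadratic in $a^2$ produces exactly the two roots $\tfrac{\beta+2}{2}\bigl(2\beta+5 \pm \sqrt{(2\beta+5)^2 - 9}\bigr)$, the larger of which is $C_\beta$. This identifies $C_\beta$ as the critical threshold and motivates the three sub-cases of Case 1. The remaining analysis proceeds by tracking signs of the leading coefficient $2(\beta+1)$, the root product $\tfrac{(\beta+2)a^2}{2(\beta+1)}$, and the root sum $\tfrac{2a^2 - 3(\beta+2)}{2(\beta+1)}$ of $P(\cdot, 1)$.

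In Case 1 ($\beta > -1$, $\beta \neq 0$), both $\beta+1$ and $\beta+2$ are positive: $a^2 < C_\beta$ gives $\Delta < 0$ and $P > 0$; $a^2 = C_\beta$ gives $\Delta = 0$ and $P$ is a perfect square, yielding $f = \gamma(|x|^4 - c t^2)^2$ with $\gamma, c > 0$ (the factor $|x|^4$ being forced by homogeneity); $a^2 > C_\beta$ gives $\Delta > 0$ with positive sum and product of roots, hence two distinct positive roots $r_1, r_2$, and setting $c_i = \sqrt{r_i}$ produces the claimed four real linear factors. In Case 2 ($\beta \in (-2,-1)$), the product of roots is negative, yielding $r_1 > 0 > r_2$; setting $c_1 = \sqrt{r_1}$ and $c_3 = -r_2$ gives the mixed factorization $\gamma(|x|^2 - c_1 t)(|x|^2 + c_1 t)(|x|^4 + c_3 t^2)$. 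In Case 3 ($\beta < -2$), I would bypass explicit factorization: if $\Delta < 0$, $P$ has the constant sign of its negative leading coefficient $2(\beta+1)$; if $\Delta \geq 0$, both roots are negative (positive product, negative sum), so on the first quadrant $u, v \geq 0$ each linear factor $(u - r_i v)$ is nonnegative while the overall scalar is negative, giving $f < 0$ on $\mathbb{R}^{2n+1} \setminus \{0\}$.

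The main obstacle is pure sign bookkeeping. Two points deserve care: in Case 1 with $a^2 > C_\beta$, the strict positivity of the root sum requires the auxiliary inequality $C_\beta > 3(\beta+2)/2$, which follows from $(2\beta+5)^2 \geq 9$ for $\beta \geq -1$ (equivalent to $(\beta+1)(\beta+4) \geq 0$); and in Case 3, for $\beta \in (-4,-2)$ the discriminant $\Delta$ can genuinely be negative (the thresholds $C_\beta^\pm$ become complex), so the direct sign argument for $P$ is essential and cannot be replaced by an explicit factorization.
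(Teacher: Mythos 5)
Your reduction to the quadratic $P(u,v)$ with $u=|x|^4$, $v=t^2$, the computation of the discriminant $4a^4-4(\beta+2)(2\beta+5)a^2+9(\beta+2)^2$, and the identification of $C_\beta^{\pm}$ as its roots in the variable $a^2$ are exactly the paper's argument. Cases 2 and 3, and the sub-cases $a^2=C_\beta$ and $a^2>C_\beta$ of Case 1, are handled correctly (your Case 3 discussion via the sign of the root sum and product is slightly more roundabout than the paper's one-line observation that all three coefficients of $P$ are negative when $\beta<-2$, but it works, and you are right that for $\beta\in(-4,-2)$ the discriminant can be negative so no real factorization is available there).

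The one genuine flaw is in the first sub-case of Case 1: you assert that $a^2<C_\beta$ gives $\Delta<0$. This is false on the range $0<a^2\le C_\beta^-$, since $\Delta<0$ holds precisely for $C_\beta^-<a^2<C_\beta^+$, and $C_\beta^-=\frac{\beta+2}{2}\bigl(2\beta+5-\sqrt{(2\beta+5)^2-9}\bigr)$ is strictly positive for $\beta>-1$ (for instance $C_1^-\approx 1.01$), so the excluded range is nonempty. The conclusion $P>0$ is still true there, but your stated reason does not cover it. The paper closes exactly this gap with a second observation: if $3(\beta+2)-2a^2>0$ then all three coefficients of $P$ are positive and $P>0$ outright; and the inequality $C_\beta^-<\frac{3(\beta+2)}{2}$, which follows from $(2\beta+5)^2-9=(2\beta+2)(2\beta+8)>(2\beta+2)^2$, shows that the two regimes $a^2\le C_\beta^-$ and $C_\beta^-<a^2<C_\beta^+$ together exhaust $a^2<C_\beta$. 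Equivalently, in your own framework: for $a^2\le C_\beta^-$ the two real roots of $P(\cdot,1)$ have positive product $\frac{(\beta+2)a^2}{2(\beta+1)}$ and negative sum $\frac{2a^2-3(\beta+2)}{2(\beta+1)}$, hence are both negative, and the leading coefficient $2(\beta+1)$ is positive, so $P>0$ on the first quadrant away from the origin. Either patch is one line, but as written that sub-case is not justified.
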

\begin{proof}
Let $g(y,s) = 2(\beta+1) y^2 + (3(\beta+2)-2a^2) ys + (\beta+2) a^2 s^2$. Then $f(x,t) = g(|x|^4, t^2)$. Suppose $\beta \in (-1,0) \cup (0,\infty)$. 
First, we see that $f(x,t) >0$ for $3(\beta +2) - 2a^2 > 0$. Secondly, we have $f(x,t) >0 $ if
\begin{eqnarray*}
\Delta := 4a^{4} - 4 (\beta+2) (2\beta + 5) a^2 + 9 (\beta +2)^2 < 0.
\end{eqnarray*}
This holds if and only if
\begin{eqnarray*}
C_{\beta}^{-} < a^2 < C_{\beta}^{+},
\end{eqnarray*} 
where
\begin{eqnarray*}
C_{\beta}^{\pm} = \frac{\beta +2}{2} \left(2\beta +5 \pm \sqrt{(2\beta  + 5)^2 - 9 }\right) .
\end{eqnarray*}
Observe that
\begin{eqnarray*}
C_{\beta}^{-} = \frac{(\beta+2)}{2} (2\beta + 5 - \sqrt{(2\beta +5)^2 -9}) & = & \frac{(\beta+2)}{2} (2\beta + 5 - \sqrt{(2\beta+2)(2\beta+8)}
\\
& < & \frac{(\beta+2)}{2} (2 \beta+5 - \sqrt{(2\beta+2)^2}) = \frac{3 (\beta+2)}{2}.
\end{eqnarray*}
We can combine the above two conditions as $g(y,s) >0$ for $a^2 < C_{\beta}^{+}$. For $a^2 = C_{\beta}$, we have $g(y,s) = \gamma(y-cs)^2 $ for some $c >0$. For $a^2 > C_{\beta}$, we have $g(y,s) = \gamma (y-c_1 s) (y-c_2 s)$ for some $c_1, c_2 >0$ since $2(\beta+1) \cdot (\beta+2) a^2 > 0$. 
\

Finally, if $\beta \in (-2,-1),$ then $2(\beta+1)(\beta+2) a^2 < 0$, and so $g(y,s) = \gamma (y-c_1 s) (y+c_2 s)$. If $\beta \in (-\infty, -2)$, then $2(\beta+1)<0, ~ 3(\beta+2)-2a^2 <0$ and $\beta+2 <0$. Thus $g(y,s) < 0$. This completes the proof.
\end{proof}

\begin{lem}\label{rank} Let $L_1(x,t,y,s)$ be the upper left $(2n) \times (2n)$ block matrix of $L(x,t,y,s)$ and suppose that $\left( x,t,y,s\right)$ is contained in $S$. If $ \beta \neq -4$, then 
\begin{eqnarray*}
\det L_1 (x,t,y,s) \neq 0.
\end{eqnarray*}
\end{lem}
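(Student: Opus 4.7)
The approach is to reduce $\det L_1$ to a concrete polynomial in $(\textbf{x},\textbf{t}) := (x,t)\cdot(y,s)^{-1}$ by the same rank-one expansion used in the proof of Lemma \ref{det}, and then to relate that polynomial to the polynomial $f$ of Lemma \ref{det} through a short algebraic identity.

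First, I read off $L_1$ from \eqref{36} and \eqref{sowe}. Since the upper left $2n\times 2n$ block of the rank-one matrix $D\cdot D^{T}$ equals $|\textbf{x}|^{4} K$, the block $L_1$ is, up to the non-vanishing scalar $-\beta(|\textbf{x}|^{4}+\textbf{t}^{2})^{-\beta/4-1}$, equal to
\[
M := B + \alpha K, \qquad \alpha := 2-\frac{(\beta+4)|\textbf{x}|^{4}}{|\textbf{x}|^{4}+\textbf{t}^{2}} = \frac{2\textbf{t}^{2}-(\beta+2)|\textbf{x}|^{4}}{|\textbf{x}|^{4}+\textbf{t}^{2}}.
\]
Since $K$ has rank one, the identity \eqref{convention} yields $\det M = \det B + \alpha\sum_{j}\det(b_{1},\ldots,k_{j},\ldots,b_{2n})$. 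Combining this with $\det B = (|\textbf{x}|^{4}+a^{2}\textbf{t}^{2})^{n}$ and the sum formula established in \eqref{detK}, I obtain
\[
\det M \;=\; (|\textbf{x}|^{4}+a^{2}\textbf{t}^{2})^{n-1}\bigl[(|\textbf{x}|^{4}+a^{2}\textbf{t}^{2})+\alpha|\textbf{x}|^{4}\bigr] \;=\; \frac{(|\textbf{x}|^{4}+a^{2}\textbf{t}^{2})^{n-1}}{|\textbf{x}|^{4}+\textbf{t}^{2}}\,g(\textbf{x},\textbf{t}),
\]
where, after clearing the denominator, $g(\textbf{x},\textbf{t}) := -(\beta+1)|\textbf{x}|^{8}+(a^{2}+3)|\textbf{x}|^{4}\textbf{t}^{2}+a^{2}\textbf{t}^{4}$.

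The main observation is the purely algebraic identity
\[
f(\textbf{x},\textbf{t}) + 2\,g(\textbf{x},\textbf{t}) \;=\; (\beta+4)\,\textbf{t}^{2}\bigl(3|\textbf{x}|^{4}+a^{2}\textbf{t}^{2}\bigr),
\]
which is verified by collecting the coefficients of $|\textbf{x}|^{8}$, $|\textbf{x}|^{4}\textbf{t}^{2}$ and $\textbf{t}^{4}$ on both sides. On the support of the cutoff both $|\textbf{x}|^{4}+a^{2}\textbf{t}^{2}$ and $|\textbf{x}|^{4}+\textbf{t}^{2}$ are strictly positive (the former because $a\neq 0$), so by Lemma \ref{det} the condition $(x,t,y,s)\in S$ is equivalent to $f(\textbf{x},\textbf{t})=0$. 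The identity then reduces to $2\,g(\textbf{x},\textbf{t})=(\beta+4)\textbf{t}^{2}(3|\textbf{x}|^{4}+a^{2}\textbf{t}^{2})$. The factor $3|\textbf{x}|^{4}+a^{2}\textbf{t}^{2}$ is strictly positive since $a\neq 0$ and $(\textbf{x},\textbf{t})\neq 0$, and inspection of the three cases of Lemma \ref{factor} shows that every irreducible factor of $f$ forces $\textbf{t}\neq 0$ as soon as $\textbf{x}\neq 0$. Hence $g\neq 0$ whenever $\beta\neq -4$, and reinstating the nonzero scalar prefactor gives $\det L_{1}\neq 0$.

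The only substantive step is spotting the identity linking $g$ and $f$; everything else is a short extension of the rank-one determinant calculation already performed in the proof of Lemma \ref{det}, so the argument does not require any genuinely new computation.
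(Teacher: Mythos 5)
Your proposal is correct and follows essentially the same route as the paper: both reduce $\det L_1$ (up to the nonvanishing scalar prefactor) to the determinant of $B+\bigl(2-\tfrac{(\beta+4)|\textbf{x}|^4}{|\textbf{x}|^4+\textbf{t}^2}\bigr)K$, evaluate it by the rank-one expansion \eqref{convention} together with \eqref{detB} and \eqref{detK}, and then use the defining relation $f(\textbf{x},\textbf{t})=0$ on $S$ to rewrite the result as a multiple of $(\beta+4)\,\textbf{t}^{2}(3|\textbf{x}|^{4}+a^{2}\textbf{t}^{2})$, concluding via $\textbf{t}\neq 0$ on $S$. The only cosmetic differences are that you package the substitution as the identity $f+2g=(\beta+4)\textbf{t}^2(3|\textbf{x}|^4+a^2\textbf{t}^2)$ and deduce $\textbf{t}\neq 0$ from the factorization in Lemma \ref{factor} rather than directly from $f(\textbf{x},0)=2(\beta+1)|\textbf{x}|^{8}$.
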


\begin{proof}
For simplicity, set $(z,w) := (x,t)\cdot (y,s)^{-1}$. In view of \eqref{BKE} and \eqref{sowe}, except the nonzero common facts,  we only need to check that the determinant of 
\begin{eqnarray*}
M(z,w)=  \begin{pmatrix} |z|^2 I + a w J + 2 z \cdot z^T  - ( \beta+4) \frac{|z|^4}{|z|^4 + w^2 } x \cdot z^T \end{pmatrix},
\end{eqnarray*}
is nonzero for $(z,w) \neq (0,0)$. This determinant can be calculated in the same way as the determinant of $L$ by using \eqref{detB} and \eqref{det2}. We find
\begin{equation}\label{det10}
\begin{split}
\det(M(z,w)) =& (|z|^4 + a^2 w^2)^n + (|z|^4 + a^2 w^2 )^{n-1} |z|^4 \left( 2 - (\beta+4) \frac{|z|^4}{|z|^4 + w^2} \right)
\\
=& \frac{(|z|^4+a^2 w^2)^{n-1}}{|z|^4 + w^2} \left[  - (\beta+1) |z|^8 + (a^2 + 3) |z|^4 w^2 + a^2 w^4 \right]. 
\end{split}
\end{equation}
Notice that $(z,w)$ is in $S$ and satisfies
\begin{eqnarray}\label{zero}
2(\beta+1)|z|^8 + (3 (\beta+2) - 2a^2) |z|^4 w^2 + (\beta+2) a^2 w^4 = 0. 
\end{eqnarray} 
From \eqref{det10} and \eqref{zero} we get
\begin{eqnarray*}
\det(M(z,w)) =  \frac{(|z|^4+a^2 w^2)^{n-1}}{|z|^4 + w^2} \frac{w^2}{2}  (\beta+4)\left[ 3|z|^4 + a^2 w^2 \right].
\end{eqnarray*}
If $w=0$, then $z$ becomes zero in \eqref{zero}. Because $(z,w) \neq (0,0)$, $w$ should be nonzero. Thus $\det (M(z,w)) \neq 0$. The Lemma is proved.
\end{proof}

We are now ready to prove our first main theorems by studying the canonical relation \eqref{relation1} associated to the phase~$\Phi$,
\begin{eqnarray*}
C_{\Phi} = \{ \left( (x,t) , \Phi_{(x,t)}, (y,s), - \Phi_{(y,s)} \right) \} \subset T^{*} (\mathbb{R}^{2n+1}) \times T^{*} (\mathbb{R}^{2n+1}),
\end{eqnarray*}
and the associated projection maps $ \pi_L : C_{\Phi} \rightarrow T^{*} (\mathbb{R}^{2n+1}) \quad \textrm{and} \quad \pi_R : C_{\Phi} \rightarrow T^{*}(\mathbb{R}^{2n+1}).$

\begin{proof}[Proof of Proposition \ref{LA} Proposition \ref{LB}]

Let
\begin{eqnarray*}
S = \{ (x,t,y,s) : \det H (x,t,y,s) =0 \}.
\end{eqnarray*} 
In view of Proposition \ref{degenerate}, it is enough to show that on the hypersurface $S$, 
\begin{enumerate}
\item If $\beta \in (-2,-1)$ or  $\beta \in (-1,0) \cup (0,\infty)$ and ${a^2} > C_{\beta}$, then both projections $\pi_L$ and $\pi_R$ have 1-type folds singularities. 
\item If $\beta \in (-1,0) \cup (0, \infty)$ and ${a^2} = C_{\beta}$, then both $\pi_L$ and $\pi_R$ have 2-type folds singularities.
\end{enumerate}
We will only prove  (1). The second case can be proved in the same way, the only difference is the form of factorizations in Lemma \ref{factor} which determine the order of types. We need to show that on the hypersurface $S$, both $\pi_L$ and $\pi_R$ have 1-type folds singularities. Rcall from Lemma \ref{det} that $S$ is a subset of $\mathbb{R}^{2n+1}$ consisting of $(x,t,y,s) \in \mathbb{R}^{2(2n+1)}$ such that
\[ F\left((x,t)\cdot (y,s)^{-1}\right) = F\left( x-y, s-t + 2a x^T J y\right) =0 \quad \textrm{and}  \quad \rho\left( (x,t)\cdot (y,s)^{-1}\right) \sim 1. \]
From the form of $F$ and the fact that $((x,t) \cdot (y,s)^{-1}) \neq 0$, we have
\[
S= \{ (x,t,y,s) \in \mathbb{R}^{2(2n+1)} ~ | ~ f \left( x-y, s- t + 2ax^T J y\right) = 0, \quad \rho\left( (x,t)\cdot (y,s)^{-1} \right) \sim 1 \}.
\]
From Theorem \ref{factor}, we have
\begin{eqnarray*}
f(x,t) = \gamma (|x|^2 - c_1 t ) (|x|^2 + c_1 t) (|x|^2 - c_2 t )(|x|^2 + c_2 t).
\end{eqnarray*}
for some two different constants $c_1, c_2 >0$. 
\

Note that Lemma \ref{rank} implies the condition (1) of Definition \ref{def} is satisfied. Therefore, it is enough to show the second condition, i.e., at each point $P_0 \in S$ the determinant of $Df$ vanishes with order 1 in each null direction of $d\pi_L$ and $d\pi_R$ at $P_0$.
 Fix a point $P_0 = (x, t, y, s) \in \mathbb{R}^{2n+1} \times \mathbb{R}^{2n+1}$ and assume that $P_0$ is contained in 
\begin{eqnarray*}
S_1 =: \{ (x,t,y,s) \in \mathbb{R}^{2(2n+1)} ~ | ~ |x-y|^2 - c_1 (s - t + 2a x^T J y) = 0 \}.
\end{eqnarray*}
We may identify $C_{\Phi} = \{ \left( (x,t), \Phi_{(x,t)}, (y,s), - \Phi_{(y,s)} \right) \}$ with an open set in $\mathbb{R}^{(2n+1)} \times \mathbb{R}^{(2n+1)}$ by the diffeomorphsim $\psi : \mathbb{R}^{(2n+1)} \times \mathbb{R}^{(2n+1)} \rightarrow S$ given by
\begin{eqnarray*}
\psi (x,t,y,s) = \left( (x,t), \Phi_{(x,t)}, (y,s), - \Phi_{(y,s)} \right).
\end{eqnarray*}
Let $v_L \in \mathbb{R}^{2(2n+1)}$ be a null direction of $d \pi_L$ at $P_0$, i.e.,
\begin{eqnarray*}
\begin{pmatrix} I & 0 \\ \frac{\partial^2 \Phi}{\partial_{(x,t)} \partial_{(x,t)} } & \frac{\partial^2 \Phi}{\partial_{(y,s)}\partial_{(x,t)}}  \end{pmatrix} v_L^T = 0.
\end{eqnarray*}
Thus, $v_L$ is of the form $v_L = (0,0,z,w)$ with $ w \in \mathbb{R}^{2n}$ and $s \in \mathbb{R}$  such that
\begin{eqnarray}\label{200}
\frac{\partial^2 \Phi}{\partial_{(y,s)}\partial_{(x,t)}} \begin{pmatrix} z^T \\ w \end{pmatrix} = 0.
\end{eqnarray} 
To check that $\det H(x,t,y,s)$ vanishes of order 1 in the direction $v_L$, it is enough to show that $v_L$ is not orthogonal to the gradient vector $v_g$ of $\det H(x,t,y,s)$ at $P_0$.
By a direct calculation we see that the gradient vector $v_g$ is equal to 
\begin{eqnarray*}
\left. D_{(x,t),(y,s)} \Phi \left( (x,t) \cdot (y,s)^{-1}\right)\right|_p = \left( 2(x-y) - 2 a c_{1} a J y, ~ - c_{1}, ~ -2 (x-y) - 2 a c_{1} x^{T} J,~ c_{1} \right)
\end{eqnarray*}
Suppose with a view to contradiction that $v_L$ and $v_g$ are orthogonal.
 It means that 
\begin{eqnarray}\label{OG}
-2 (x-y) \cdot z - 2 a c_{1} x^T J \cdot z + c_{1} w = 0.
\end{eqnarray}
From \eqref{35}, we have
\begin{eqnarray}\label{201}
\frac{\partial^2 \Phi}{\partial_{(y,s)}\partial_{(x,t)}}\begin{pmatrix} z^{T} \\ w \end{pmatrix} = A_a (y) \left[ (\partial_i \partial_j \Phi) - 2a (\partial_{2n+1} \Phi) \begin{pmatrix} J & 0 \\ 0 & 0 \end{pmatrix} \right](\textbf{x},\textbf{t})~ A_a (x)^T \cdot \begin{pmatrix} z^T \\ w \end{pmatrix}.
\end{eqnarray}
A simple calculation shows that
\begin{eqnarray*}
A_a (x)^T \cdot \begin{pmatrix} z^T \\ w \end{pmatrix} &=&
\begin{pmatrix} 1 & 0 & 0 & \cdots & 0 & 0 \\ 0 & \ddots & 0 & \cdots & 0 &0 \\
0&0&1  & \cdots &0&\vdots \\  0 & 0 &0& \ddots & 1 & 0 \\ 2a x_{n+1} & \cdots  & -2a x_1 & \cdots & -2a x_n & 1 
\end{pmatrix} 
\begin{pmatrix}
z_1 \\ z_2 \\ \vdots \\ z_{2n} \\ w 
\end{pmatrix}
\\
&=& \begin{pmatrix} z_1, & z_2,  & \cdots, & z_{2n}, & 2a(x_{n+1} z_1 + \cdots + x_{2n} z_n - x_1 z_{n+1} - \cdots - x_n z_{2n} ) + w
\end{pmatrix}^T.
\end{eqnarray*}
On the other hand, from the orthogonal assumption \eqref{OG} we get
\begin{eqnarray*}
2a (x_{n+1} z_1 + \cdots - x_n z_{2n} ) + w = \frac{2 (x-y) \cdot z } {c_{1}}.
\end{eqnarray*}
Thus, 
\begin{eqnarray*}
A_a (x)^{T} \cdot \begin{pmatrix} z^T \\ w \end{pmatrix} = \begin{pmatrix} z_1, & z_2, & \cdots, & z_{2n}, & \frac{2(x-y) \cdot z }{c_{1}} \end{pmatrix}^T.
\end{eqnarray*}
Recall that
\begin{eqnarray*}
\left[ (\partial_i \partial_j \Phi) - 2 a (\partial_{2n+1} \Phi) \begin{pmatrix} J & 0 \\ 0&0 \end{pmatrix} \right] (x,t)= (\beta+4) \begin{pmatrix} |x|^4 x_1^2 & \cdots & |x|^4 x_1 x_n & |x|^2 x_1 \frac{t}{2} 
\\
\vdots & \ddots & \vdots & \vdots 
\\
|x|^4 x_n x_1 & \cdots & |x|^4 x_n^2 & |x|^2 x_n \frac{t}{2} 
\\
|x|^2 \frac{t}{2} x_1 & \cdots & |x|^2 \frac{t}{2} x_n & \frac{t^2}{4} 
\end{pmatrix} - (|x|^4 + t^2 ) \begin{pmatrix} J & 0 \\ 0 & \frac{1}{2} \end{pmatrix}. 
\end{eqnarray*}
Substituting $x-y$ for $x$ and  $ t-s + 2a x^T J y = \frac{|x-y|^2}{c_{1}}$ for $t$, where the equality holds since the point $P_0$ is on the surface $S_1$. Then, from $(2n+1)$-th equality in \eqref{200} with \eqref{201}, we have
\begin{eqnarray*}
(\beta + 4) \left[ |x-y|^2 \cdot \frac{1}{2} \frac{|x-y|^2}{c_{\beta,1}} (x-y) \cdot z + \frac{|x-y|^4}{c_{\beta,1}^2} \cdot \frac{2}{c_{\beta,1}} (x-y) \cdot z \right] - \frac{1}{2} ( |x-y|^4 + \frac{|x-y|^4}{c_{\beta,1}^2} )\frac{2}{c_{\beta,1}} (x-y) \cdot z = 0.
\end{eqnarray*}
Rearranging it, we obtain
\begin{eqnarray*}
\left[ \frac{\beta+2}{2 c_{\beta,1}} + \frac{1}{c_{\beta,1}^3}\right] |x-y|^4~ (x-y) \cdot z = 0. 
\end{eqnarray*}
Thus $ (x-y) \cdot z =0$, and hence 
\begin{eqnarray*}
A_a (x)^T \cdot \begin{pmatrix} z \\ w \end{pmatrix} = \begin{pmatrix} z_1, z_2, \cdots, z_{2n}, 0\end{pmatrix}^{T} \quad \textrm{and} \quad L_1 (x,t,y,s) \cdot \begin{pmatrix} (z_1,z_2,\cdots, z_{2n})\end{pmatrix}^{T} =0.  
\end{eqnarray*}
Now from $\det L_1 \neq 0$ in Lemma \ref{rank} we have $z=0$ and so $w=0$ from \eqref{OG}. This is a contradiction since $v_L$ should be a nonzero direction vector. Therefore $v_L$ and $v_R$ can not be orthogonal.
\

Now we shall prove the same conclusion for  $d\pi_R$ without repeating the calculations.  
Note that the above argument  for $d \pi_L$ is exactly to show that there is no nontrivial solution $(z,w)$ of the system of equation $S({a,x,y}) $: 

\begin{eqnarray*}
(\frac{\partial^2}{\partial x_i \partial y_j} \Phi) \begin{pmatrix} z^T \\ w \end{pmatrix} = A_a (y) \left[ (\partial_i \partial_j \Phi) - 2a (\partial_{2n+1} \Phi) \begin{pmatrix} J & 0 \\ 0 & 0 \end{pmatrix} \right] A_a (x)^T \cdot \begin{pmatrix} z^T \\ w \end{pmatrix} = 0,
\end{eqnarray*}
and
\begin{eqnarray*}
(-2(x-y) - 2 a c_{\beta,1} x^T J, c_{\beta,1}) \cdot (z,w) = 0.
\end{eqnarray*}
On the other hand, to show the folding type condition for the projection $\pi_R$, it is enough to show that there is no nontrivial solution $v_R = ( z_0, w_0 , 0, 0 )$ which satisfies the system of equations :
\begin{eqnarray*}
(\frac{\partial^2}{\partial y_i \partial x_j} \Phi)\begin{pmatrix} z_0^T \\ w_0 \end{pmatrix} = A_a (x) \left[ (\partial_i \partial_j \Phi) + 2a (\partial_{2n+1} \Phi) \begin{pmatrix}  J & 0 \\ 0 & 0 \end{pmatrix} \right] A_a (y)^T \cdot \begin{pmatrix} z_0^T \\ w_0 \end{pmatrix} = 0,
\end{eqnarray*}
and
\begin{eqnarray*}
\left( 2(x-y) + 2a c_{\beta,1} y^T J ,~ - c_{\beta,1} \right) \cdot (z_0, w_0) = 0.
\end{eqnarray*}
Because $A_{-a}(-x) = A_a (x)$ and $A_{-a}(-y) = A_a (y)$, the above system can be written as follows.
\begin{eqnarray*}
(\frac{\partial^2}{\partial y_i \partial x_j} \Phi)\begin{pmatrix} z_0^T \\ w_0 \end{pmatrix} = A_{-a} (-x) \left[ (\partial_i \partial_j \Phi) - 2(-a) (\partial_{2n+1} \Phi) \begin{pmatrix}  J & 0 \\ 0 & 0 \end{pmatrix} \right] A_{-a} (-y)^T \cdot \begin{pmatrix} z_0^T \\ w_0 \end{pmatrix} = 0,
\end{eqnarray*}
and
\begin{eqnarray*}
\left( - 2((-y)-(-x)) - 2(-a) c_{\beta,1} (-y)^T J , ~ c_{\beta,1} \right) \cdot (z_0, w_0) = 0.
\end{eqnarray*}
We now see that $( z_0, w_0)$ satisfies the system $S({-a, -y, -x})$. Since the above argument for proving nonexistence of nontrivial solution of $S(a,x,y)$ does not depend on specific values of $a$, $x$ and $y$, the same conclusion holds for the system $S(-a,-y,-x)$. This completes the proof.
\end{proof}

\begin{rem}On $\mathbb{R}^n$, the oscillating kernel is of the form $|x|^{-\gamma} e^{i|x|^{\beta}}$ with $\beta \neq 0$. The behavior for the phases $|x|^{\beta}$   depends only on whether $ \beta \neq 1$ or $\beta =1$.  Precisely, for $\beta \neq 1$, we have $\det \left( \frac{\partial^2}{\partial x \partial y} |x-y|^{\beta} \right) \neq 0 $ for any $(x,y)$ with $x\neq y$, but  $\det \left( \frac{\partial^2}{\partial x \partial y} |x-y| \right) = 0$ for  any $(x,y)$ with $x \neq y$ and this case correspond to Bochner-Riesz means operators, which still remains as a conjecture. On hand, the phase $\rho((x,t)\cdot (y,s)^{-1})^{\beta}$ has fold of the highest order type when $\beta=1$ or $\beta = 2$, which also remains open in this paper. In order to establish the sharp $L^2$ estimate for these cases, we would need to improve the current theory of oscillatory integral estimates for degenerate phases to higher orders (see \cite{cu, GR, GR2}). 
\end{rem}
\begin{rem} We note that from Lemma \ref{rank} and Case 3 of Lemma \ref{factor}, 
\begin{eqnarray}\label{L2}
\| L_{A_j} \|_{L^2 \rightarrow L^2} + \| L_{B_j}\|_{L^2 \rightarrow L^2} \lesssim 2^{j (\alpha - n \beta)} 
\end{eqnarray}
holds for all cases. It will be sufficient to use this weaker bound for the Hardy spaces estimates in Section 5. 
\end{rem}

\

\section{Hardy spaces on the Heisenberg groups}
In this section we recall some properties of Hardy spaces on the Heisenberg group. We refer Coifman-Weiss \cite{Co} and Folland-Stein \cite{Fo} for the details. From now on, we shall write $\rho(x)$ (resp., $x\cdot y$) just as $|x|$ (resp., $xy$) for  notational convenience. It is known that $|x \cdot y | \leq |x| + |y|$ holds for all $x, y \in \mathbb{H}^n_a$ (see \cite[p. 688]{Lin}). 
\

The left-invariant vector fields on $\mathbb{H}^n_a$ is spanned by $T=\frac{\partial}{\partial t}$ and $X_j = \frac{\partial}{\partial {x_j}} + 2ax_{n+j} \frac{\partial}{\partial t},\quad X_{j+n}=\frac{\partial}{\partial{x_{j+n}}}-2ax_{n}\frac{ \partial}{\partial t}$, $1\leq j \leq n$. Let $Y_j = X_j $ for $1\leq j \leq 2n$ and $Y_{2n+1} = T$. We say that the right-invariant differential operator $Y^I = Y_1^{i_1}\cdots Y_{2n+1}^{i_{2n+1}}$ has homogeneous degree $d(I) = i_1 + i_2 + \cdots + i_{2n} + 2 i_{2n+1}$. For $a \in \bar{\mathbb{N}}$, we define $\mathcal{P}_a$ to be the set of all homogeneous polynomials of degree $a$.

Suppose that $x \in \mathbb{H}^n_a$, $ a \in \bar{\mathbb{N}}$, and $f$ is a function whose distributional derivatives $Y^I f$ are continuous in a neighborhood of $x$ for $d(I) \leq a$. The homogeneous right $Taylor~polynomial $ of $f$ at $x$ of degree $a$ is the unique $P_{f,x} \in \mathcal{P}_a$ such that $Y^{I}P_{f,x}(0) = Y^{I} f(x)$ for $d(I) \leq a$. 

\begin{prop}[\cite{Fo}]\label{appro} Suppose that $f \in C^{k+1}$, $T \in \mathcal{S}'$, and $P_{f,x}(y) = \sum_{d(I)\leq k} a_I(x) \eta^{I}(y)$ is the right Taylor polynomial of $f$ at $x$ of homogeneous degree $k$. Then $a_I$ is a linear combination of the $Y^{J} f$ for $d(J) \leq k$,
\begin{eqnarray}\label{formula}
|f(y x)-P_{f,x}(y)| \leq C_k |y|^{k+1} \sup_{\substack{ d(I)=k+1\\|z|\leq b^{k+1}|y|}} |Y^{I} f(z x)|.
\end{eqnarray} 
\end{prop}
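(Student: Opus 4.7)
The statement naturally splits into two assertions: the algebraic description of the coefficients $a_I(x)$, and the quantitative remainder bound~\eqref{formula}. I would address them in that order.

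For the first part, the plan is to interpret the defining conditions $Y^I P_{f,x}(0) = Y^I f(x)$ for $d(I) \leq k$ as a linear system for the unknown $a_I(x)$ with respect to the basis $\{\eta^J : d(J) \leq k\}$ of polynomials of homogeneous degree at most $k$. After ordering multi-indices by increasing $d(I)$, the coefficient matrix $M_{IJ} := Y^I \eta^J(0)$ is block-triangular with respect to the homogeneous grading: each $Y_j$ equals $\partial_{y_j}$ plus a term of the form $y_\ell \,\partial_t$ which lowers homogeneous degree by one, so the degree-preserving part of $Y^I$ acting on $\eta^J$ is exactly $\partial^I \eta^J$, giving diagonal entries $I!\,\delta_{IJ}$ within each degree block. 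Thus $M$ is invertible and the unique solution has the form $a_I(x) = \sum_{d(J) \leq k} c_{IJ}\, Y^J f(x)$ with universal constants $c_{IJ}$, as claimed.

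For the remainder, I would reduce to the scalar Taylor theorem applied to the function $h(\lambda) := f\bigl((\lambda \cdot y)\,x\bigr)$ on $\lambda \in [0,1]$, where $\lambda \cdot y$ denotes the anisotropic dilation. Iterating the chain rule and using the left-invariance of the $Y_j$, one expresses $h^{(m)}(\lambda)$ as a finite sum of terms of the form $c_J\, \eta^J(y)\, (Y^J f)\bigl((\lambda \cdot y)\,x\bigr)$ with $d(J) = m$; the monomial factor arises from the homogeneity identity $\eta^J(\lambda \cdot y) = \lambda^{d(J)} \eta^J(y)$ together with the fact that differentiating in $\lambda$ lowers homogeneous weight by one per application. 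The standard integral remainder
\[
h(1) = \sum_{m=0}^{k} \frac{h^{(m)}(0)}{m!} + \int_{0}^{1} \frac{(1-\lambda)^k}{k!}\, h^{(k+1)}(\lambda)\, d\lambda
\]
has first part $\sum_{m\leq k} h^{(m)}(0)/m! = P_{f,x}(y)$ by the uniqueness established above, while the remainder is bounded using $|\eta^J(y)| \lesssim |y|^{d(J)} = |y|^{k+1}$ and the quasi-triangle inequality $|\lambda \cdot y| \leq |y|$ applied together with the iterated multiplication law, which produces the constant $b^{k+1}$ in the supremum region.

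The main obstacle is the bookkeeping in the chain-rule expansion for $h^{(m)}$: because $[X_i, X_{n+i}] = -4a T$ and the group law is non-commutative, differentiating in $\lambda$ produces mixed terms in which the monomial factor $\eta^I(y)$ has $d(I) < m$, paired with $Y^J f$ where $d(J) < m$, so that the total homogeneous weight $d(I) + \text{(order of } Y^J\text{)}$ is the right quantity to track rather than either factor alone. One must verify that all such terms of weight less than $k+1$ reorganize precisely into the Taylor polynomial $P_{f,x}(y)$ (which is forced by the uniqueness in the first step applied to the polynomial $\sum_{m<k+1} h^{(m)}(0) \eta^?/m!$), so that only genuinely top-order contributions $d(J)=k+1$ survive in the remainder supremum.
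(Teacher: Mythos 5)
The paper offers no proof of this proposition: it is quoted verbatim (in the "right" version) from Folland--Stein \cite{Fo}, so I am measuring your argument against the standard proof there. Your first step is sound and is essentially Folland--Stein's existence/uniqueness argument for Taylor polynomials: since $Y^I$ maps $\mathcal{P}_a$ to $\mathcal{P}_{a-d(I)}$ and a homogeneous polynomial of positive degree vanishes at $0$, the matrix $\bigl(Y^I\eta^J(0)\bigr)$ is block-diagonal in the homogeneous degree, and within a block it is triangular with nonzero diagonal --- though the correct filtration inside a block is by the number of $T$-derivatives (note e.g. $X_1X_2\,t\,(0)=-2a\neq 0$ while $d(t)=d((1,1,0))=2$), not a further grading by degree.

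The second step has a genuine gap: the dilation path $\lambda\mapsto(\lambda\cdot y)\,x$ is not horizontal. Writing $y=(y',t)$ with $y'\in\mathbb{R}^{2n}$ and expanding $u'(\lambda)$, $u(\lambda)=\lambda\cdot y$, in the invariant frame adapted to $f(\,\cdot\,x)$, one finds
\begin{equation*}
h'(\lambda)=\sum_{j=1}^{2n}y_j\,(Y_jf)\bigl((\lambda\cdot y)x\bigr)+2\lambda t\,(Tf)\bigl((\lambda\cdot y)x\bigr),
\end{equation*}
so already for $k=0$ the integral remainder contains the vertical term $2\lambda t\,Tf$, of homogeneous degree $2$ with coefficient $O(|y|^2)$, which is not admitted by the supremum $\sup_{d(I)=k+1}$ in \eqref{formula}. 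Iterating, every term of $h^{(k+1)}(\lambda)$ has the form $c\,\eta^{J_1}(y)\cdots\eta^{J_r}(y)\,\lambda^{d-(k+1)}\,Y^{J_r}\cdots Y^{J_1}f$ with \emph{total} weight $d=\sum d(J_i)$ anywhere in $[k+1,\,2(k+1)]$ and with the operator degree equal to the monomial degree; so there are no low-weight terms to reabsorb into $P_{f,x}$ (that absorption happens only at $\lambda=0$, where the positive powers of $\lambda$ kill everything with $d>m$), and the terms with $d>k+1$ genuinely survive. Your route therefore proves only the general homogeneous-group Taylor inequality $\sum_{k<d(I)\le 2(k+1)}|y|^{d(I)}\sup|Y^If|$, not the stratified form stated --- and the difference matters in this paper, since in \eqref{ap} each unit of homogeneous degree of a derivative of $K_j$ costs $2^{j(\beta+1)}$ and $R\,2^{j(\beta+1)}$ need not be $\le 1$, so the extra degrees would spoil the $\min\{1,(R2^{j(\beta+1)})^{s+1}\}$ bookkeeping of Section 5. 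The missing ingredient is Folland--Stein's horizontal connectivity lemma: write $y$ as a product of a bounded number of one-parameter horizontal segments $\exp(t_iX_{j_i})$ with $|t_i|\lesssim|y|$, and apply one-dimensional Taylor expansion along each segment; since only the degree-one fields are ever differentiated, $k+1$ differentiations produce exactly $d(I)=k+1$, and the quasi-triangle inequality along the concatenated path yields the constant $b^{k+1}$ in the supremum region.
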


We will use some properties for $H^p$ functions including the atomic decomposition and the molecular characterization. For $0 < p \leq 1 \leq q \leq \infty, p \neq q, s \in \mathbb{Z}$ and $s \geq [(2n+2)(1/p-1)]$, we say that the triple $(p,q,s)$ is admissible.
\begin{defn}
For an admissible triple $(p,q,s)$, we define $(p,q,s)$-atom centered at $x_0$ as a function $a \in L^{q} (\mathbb{H}^{n})$ supported on a ball $B \subset \mathbb{H}^n_a$ with center $x_0$ in such way that
\begin{enumerate}
\item[(i)] $\|a\|_q \leq |B|^{1/q-1/p}.$
\item[(ii)] $\int_{\mathbb{H}^n} a (x) P(x) dx = 0$ for all $P \in \mathcal{P}_s$.
\end{enumerate} 
\end{defn}
Later, we will choose $q=2$ to use the $L^2$ boundedness \eqref{L2} obtained in Section 3. 
\begin{prop}[Atomic decomposition in $\mathbb{H}^p$; see \cite{Co}]Let $(p,q,s)$ be an admissible triple. Then any $f$ in $H^p$ can be represented as a linear combination of $(p,q,s)$-atoms,
 \[ f = \sum_{i=1}^{\infty} \lambda_i f_i, \qquad\lambda_i \in \mathbb{C},\]
where the $f_i$ are $(p,q,s)$-atoms and the sum converges in $H^p$. Moreover, $\|f\|_{H^p}^p \sim \inf \{\sum_{i=1}^{\infty}|\lambda_i|^p : \sum \lambda_i f_i \textrm{ is a decomposition of f into (p,q,s)-atoms} \}.$
\end{prop}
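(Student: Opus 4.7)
The proof follows the classical Coifman--Weiss scheme, exploiting the fact that $\mathbb{H}^n_a$ is a space of homogeneous type equipped with a compatible group/dilation structure that supports polynomial moment conditions. The plan is to obtain the atomic decomposition via a Calder\'on--Zygmund / Whitney-type stopping-time argument applied to the grand maximal function characterization of $H^p$.

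First I would fix a test function $\varphi \in C_c^\infty(\mathbb{H}^n_a)$ with $\int \varphi = 1$ and use the equivalence $\|f\|_{H^p}\sim \|M_\varphi f\|_{L^p}$, where $M_\varphi f(x) = \sup_{t>0} |f*\varphi_t(x)|$ and $\varphi_t$ is the $L^1$-normalized dilate. For each $k \in \mathbb{Z}$ set $\Omega_k = \{x : M_\varphi f(x) > 2^k\}$; these are nested open sets and by the layer-cake formula $\sum_{k} 2^{kp}|\Omega_k| \lesssim \|M_\varphi f\|_{L^p}^p$. To each $\Omega_k$ I would apply a Whitney covering (valid on spaces of homogeneous type): a family of balls $B_{k,j} = B(x_{k,j}, r_{k,j})$ of bounded overlap whose union covers $\Omega_k$, with $r_{k,j}$ comparable to $\operatorname{dist}(x_{k,j},\Omega_k^c)$ and such that a fixed dilate $cB_{k,j}$ meets $\Omega_k^c$.

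Next I would choose a smooth partition of unity $\{\eta_{k,j}\}$ subordinate to $\{B_{k,j}\}$, and for each index $(k,j)$ define $P_{k,j}\in\mathcal{P}_s$ as the unique element satisfying $\int(f-P_{k,j})\,Q\,\eta_{k,j}\,dx = 0$ for all $Q \in \mathcal{P}_s$; this is solvable because $\mathcal{P}_s$ is finite dimensional and the bilinear form $(P,Q)\mapsto\int PQ\eta_{k,j}\,dx$ is positive definite. Setting
\begin{eqnarray*}
g_k = f\chi_{\Omega_k^c} + \sum_j P_{k,j}\,\eta_{k,j},\qquad f = \lim_{k_0\to-\infty} g_{k_0} + \sum_{k} (g_{k+1}-g_k),
\end{eqnarray*}
one checks that each difference $g_{k+1}-g_k$ splits as $\sum_j b_{k,j}$, where $b_{k,j}$ is supported in an enlarged Whitney ball, has vanishing moments against $\mathcal{P}_s$ by construction, and (as shown below) satisfies $\|b_{k,j}\|_q \lesssim 2^k |B_{k,j}|^{1/q}$. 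Writing $b_{k,j} = \lambda_{k,j} a_{k,j}$ with $\lambda_{k,j} = C\,2^k |B_{k,j}|^{1/p}$ then gives a genuine $(p,q,s)$-atom $a_{k,j}$.

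The main technical obstacle is the control $\|b_{k,j}\|_q \lesssim 2^k |B_{k,j}|^{1/q}$, which amounts to bounding $|f-P_{k,j}|$ pointwise on $B_{k,j}$. This is where the Heisenberg polynomial machinery enters: using the existence of $y_{k,j}\in cB_{k,j}\cap\Omega_k^c$ with $M_\varphi f(y_{k,j})\leq 2^k$, one compares $P_{k,j}$ to the right Taylor polynomial of $f*\varphi_t$ at $y_{k,j}$ at scale $t\sim r_{k,j}$, and invokes Proposition~\ref{appro} to estimate the error by derivatives of $f*\varphi_t$, which in turn are bounded by $M_\varphi f(y_{k,j})\leq 2^k$. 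Once this pointwise estimate is established, the bounded overlap of the Whitney balls gives $\sum_j |B_{k,j}|\lesssim |\Omega_k|$, so
\begin{eqnarray*}
\sum_{k,j} |\lambda_{k,j}|^p \;\lesssim\; \sum_k 2^{kp}\sum_j |B_{k,j}| \;\lesssim\; \sum_k 2^{kp}|\Omega_k| \;\lesssim\; \|f\|_{H^p}^p,
\end{eqnarray*}
which yields both $H^p$-convergence of $\sum\lambda_{k,j} a_{k,j}$ and the stated equivalence of quasi-norms.
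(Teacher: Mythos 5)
The paper does not prove this proposition at all: it is quoted verbatim from the literature (Coifman--Weiss \cite{Co}, with the homogeneous-group version in Folland--Stein \cite{Fo}), so there is no in-paper argument to compare against. Your outline is essentially the standard Calder\'on--Zygmund proof from those references --- grand maximal function, level sets $\Omega_k$, Whitney covering, projection onto $\mathcal{P}_s$ via the positive-definite bilinear form, telescoping $g_{k+1}-g_k$ --- and the overall strategy is correct.

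Two points in your sketch are genuinely more delicate than you let on. First, the claim that $g_{k+1}-g_k$ ``splits as $\sum_j b_{k,j}$'' with each $b_{k,j}$ supported in an enlarged Whitney ball of $\Omega_k$ \emph{and} having vanishing moments is not automatic from your definition of $g_k$: the cross terms $(f-P_{k+1,i})\eta_{k+1,i}\eta_{k,j}$ destroy the moment condition, and the standard fix is to introduce a second family of correction polynomials $P_{k,j,i}$ (the orthogonal projections of $(f-P_{k+1,i})\eta_{k,j}$ relative to the weight $\eta_{k+1,i}$) and verify that $\sum_j P_{k,j,i}\eta_{k+1,i}=0$; without this the pieces are not atoms. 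Second, you assert the full equivalence $\|f\|_{H^p}^p \sim \inf\sum|\lambda_i|^p$, but your argument only gives the direction $\inf\sum|\lambda_i|^p \lesssim \|f\|_{H^p}^p$. The converse requires the separate (easier but not free) estimate $\|a\|_{H^p}\lesssim 1$ uniformly over $(p,q,s)$-atoms, proved by splitting $M_\varphi a$ into the region near the supporting ball (where one uses the $L^q$ bound and the maximal theorem) and the region far from it (where one uses the moment condition together with the stratified Taylor inequality of Proposition~\ref{appro} to gain the decay $|x_0^{-1}x|^{-(2n+2)(1+\epsilon)}$ needed for $L^p$-integrability with $p\le 1$). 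Also, strictly speaking $f\in H^p$ is a distribution, so $f\chi_{\Omega_k^c}$ must be interpreted via the distributional definition of the good part. These are standard repairs, all carried out in \cite{Fo}, but they are the actual content of the theorem and should not be elided.
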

 For an admissible triple $(p,q,s)$, we choose an arbitrary real number $\epsilon > \max \{ s/(2n+2), 1/p-1\}$. Then we call $(p,q,s,\epsilon)$ an admissible quadruple.  Now we introduce the molecules.
\begin{defn} Let $(p,q,s, \epsilon)$ be an admissible quadruple. We set
\begin{eqnarray}\label{ab}
a = 1- 1/p + \epsilon, \quad b = 1-1/q + \epsilon.
\end{eqnarray}
A $(p,q,s,\epsilon)$-molecule centered at $x_0$ is a function $M \in L^{q}(\mathbb{H}^n)$ such that
\begin{enumerate}
\item $M(x) \cdot |x_0^{-1} x |^{(2n+2)b} \in L^q (\mathbb{H}^n). $
\item $\| M \|_q^{a/b} \cdot \|M(x) \cdot |x_0^{-1} x|^{(2n+2)b} \|_q^{1-a/b} \equiv \mathcal{N}(M) < \infty.$
\item $\int_{\mathbb{H}^n} M(x) P(x) dx  = 0 $ for every $P \in \mathcal{P}_s$.
\end{enumerate}
\end{defn}
\begin{thm}\label{mole}\mbox{~}
\begin{enumerate}
\item  Every $(p,q, s')$-atom $f$ is a $(p,q,s,\epsilon)$-molecule for any $\epsilon > \max\{s/(2n+2), 1/p-1\}, s \leq s' $ and $\mathcal{N} (f) \leq C_1$, where the constant $C_1$ is independent of the atom.
\item  Every $(p,q,s,\epsilon)$-molecule $M$ is in $H^p$ and $\|M\|_{H^p} \leq C_2 \mathcal{N} (M)$, where the constant $C_2$ is independent of the molecule.
\end{enumerate}
\end{thm}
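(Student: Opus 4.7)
My plan is to handle the two parts separately.

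For Part (1), the verification is direct. An atom $f$ supported on $B = B(x_0, r)$ automatically satisfies the vanishing-moment condition for $\mathcal{P}_s$ because $s \leq s'$. For the weighted $L^q$ quantity, I use that $|x_0^{-1} x| \leq r$ on the support of $f$ to get
\[
\|f(x)\, |x_0^{-1} x|^{(2n+2)b}\|_q \leq r^{(2n+2)b}\, \|f\|_q \leq r^{(2n+2)b}\, |B|^{1/q-1/p}.
\]
Inserting this together with $\|f\|_q \leq |B|^{1/q - 1/p}$ into the definition of $\mathcal{N}(f)$ and using the algebraic identity $b - a = 1/p - 1/q$ from \eqref{ab}, the powers of $r$ cancel exactly, yielding a bound $C_1$ independent of the atom.

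For Part (2), I follow the classical Coifman-Weiss blueprint of writing a molecule as a convergent sum of atoms via annular decomposition. Assume without loss of generality that $M$ is centered at $x_0 = 0$. Introduce the balancing scale $r_0 > 0$ defined by
\[
r_0^{(2n+2)b} \sim \|M\|_q \big/ \|M(x)|x|^{(2n+2)b}\|_q,
\]
so that the two ingredients of $\mathcal{N}(M)$ balance at scale $r_0$. Set $B_k = B(0, 2^k r_0)$, $E_0 = B_0$, $E_k = B_k \setminus B_{k-1}$, and split $M = \sum_{k \geq 0} M \chi_{E_k}$. A H\"older argument on each annulus, using the weight $|x|^{(2n+2)b}$, gives geometric $L^q$ decay of $\|M\chi_{E_k}\|_q$ in $k$.

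Each piece $M\chi_{E_k}$ is supported in $B_k$ but lacks the required moments against $\mathcal{P}_s$. I correct this by a telescoping procedure: subtract from $M\chi_{E_k}$ a polynomial $P_k \in \mathcal{P}_s$ multiplied by an appropriate $B_k$-supported cutoff, chosen so that the resulting function has the correct moments, and reintroduce $P_k$ into the next annulus. The polynomial $P_k$ is controlled in $L^q(B_k)$ by a duality argument against a fixed basis of $\mathcal{P}_s$ rescaled to $B_k$. The upshot is a decomposition $M = \sum_k \lambda_k a_k$ with each $a_k$ a $(p,q,s)$-atom centered at $0$ and supported in $B_k$.

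Finally I verify $\sum_k |\lambda_k|^p \lesssim \mathcal{N}(M)^p$: the balanced choice of $r_0$ makes the resulting bound a weighted geometric mean of $\|M\|_q$ and $\|M(x)|x|^{(2n+2)b}\|_q$ with exponents $a/b$ and $1 - a/b$, matching the definition of $\mathcal{N}(M)$; and the assumption $\epsilon > 1/p - 1$ gives the exponent on $2^{-k}$ in $|\lambda_k|$ a slack that is precisely what is needed for $\ell^p$-summability after the atomic normalization factor $|B_k|^{1/p - 1/q}$ is taken into account. The hardest step is the polynomial-correction bookkeeping across scales; this calculation is carried out in Coifman-Weiss~\cite{Co} for spaces of homogeneous type and in Folland-Stein~\cite{Fo} for stratified groups, and the present $\mathbb{H}^n_a$ setting follows by direct translation using the homogeneous Taylor polynomial machinery of Proposition~\ref{appro}.
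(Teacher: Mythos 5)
The paper does not actually prove Theorem~\ref{mole}: Section~4 presents it as recalled background on Hardy spaces over homogeneous groups, with the reader referred to Coifman--Weiss~\cite{Co} and Folland--Stein~\cite{Fo}. So there is no in-paper argument to compare against; what you have written is a reconstruction of the standard Taibleson--Weiss/Coifman--Weiss proof, and as a reconstruction it is essentially correct. Your Part~(1) computation is complete: the identity $b-a=1/p-1/q$ from \eqref{ab} together with $|B|\sim r^{2n+2}$ does make the powers of $r$ cancel, giving a uniform $C_1$. Your Part~(2) outline (annular decomposition about a balancing radius, geometric decay of the annular $L^q$ norms from the weight, telescoping polynomial corrections controlled by duality against a rescaled basis of $\mathcal{P}_s$, and $\ell^p$-summability from $a=1-1/p+\epsilon>0$) is the right architecture, and deferring the polynomial bookkeeping to \cite{Co} and \cite{Fo} is no less than what the paper itself does.

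One concrete slip: your balancing radius is inverted. To make $\|M\|_{L^q}\,|B_0|^{1/p-1/q}\sim\mathcal{N}(M)$ and to make the tail sum collapse to $\mathcal{N}(M)$, you need
\begin{equation*}
r_0^{(2n+2)b}\;=\;\frac{\bigl\|M(x)\,|x|^{(2n+2)b}\bigr\|_q}{\|M\|_q},
\end{equation*}
the reciprocal of what you wrote. With your $r_0$ the final weighted geometric mean comes out as $\|M\|_q^{-a/b}\,\|M(x)|x|^{(2n+2)b}\|_q^{1+a/b}$ rather than $\mathcal{N}(M)$, so the last step of your verification would not close as stated; your stated intent (``the two ingredients of $\mathcal{N}(M)$ balance at scale $r_0$'') is correct, and fixing the formula repairs the computation.
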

Thanks to this Theorem, in order to verify that $T$ is bounded on $H^p$ it is enough to show that, for all $p$-atoms $f$, the function   $T f$ is a $p$-molecule and $\mathcal{N}(Tf) \leq C$ for some constant $C$ independent $\textrm{of}~f$.

\

\section{$H^p$ estimates}
We start with a lemma which will be useful in the proofs of the sequel.
\begin{lem}\label{sum}~
\begin{enumerate}
\item Suppose that $ d<0$, $c+d < 0$ and $B > 1$. Then
\begin{eqnarray*}
\sum_{j=1}^{\infty} 2^{cj} \min \{ 1, B 2^{dj}\} \lesssim 1 + (\log B)B^{-\frac{c}{d}}.
\end{eqnarray*}
\item Suppose that $c< 0$, $d>0$ and $B< 1$. Then 
\begin{eqnarray*}
\sum_{j=1}^{\infty} 2^{cj} \min \{ 1, B2^{dj}\} \lesssim B + |\log B|  B^{-\frac{c}{d}}.
\end{eqnarray*}
\end{enumerate}
\end{lem}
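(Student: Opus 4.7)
The natural plan is to split the sum at the index where the two arguments of the minimum balance, and then sum two geometric series.

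For part~(1), since $d<0$ and $B>1$, the expression $B\,2^{dj}$ is decreasing in $j$, and $B\,2^{dj}\ge 1$ iff $j\le J:=(\log_2 B)/|d|$. I would therefore write
\begin{equation*}
\sum_{j=1}^{\infty} 2^{cj}\min\{1,B\,2^{dj}\}
= \sum_{1\le j\le J} 2^{cj} + \sum_{j>J} B\,2^{(c+d)j}=:S_1+S_2 .
\end{equation*}
The second sum is a convergent geometric series because $c+d<0$, and its tail is bounded (up to a constant) by $B\,2^{(c+d)J}=B\cdot B^{(c+d)/|d|}=B^{-c/d}$, using $|d|=-d$. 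For $S_1$, the geometric sum $\sum_{j\le J}2^{cj}$ is $\lesssim 1$ when $c<0$, $\lesssim 2^{cJ}=B^{-c/d}$ when $c>0$, and equals $J\asymp(\log B)/|d|$ when $c=0$; in this borderline case $B^{-c/d}=1$, so $S_1\lesssim(\log B)\,B^{-c/d}$. In every case $S_1+S_2\lesssim 1+(\log B)B^{-c/d}$, which is the claim.

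For part~(2), the roles are reversed: with $d>0$ and $B<1$, $B\,2^{dj}\ge 1$ iff $j\ge J:=|\log_2 B|/d$, so I would split as
\begin{equation*}
\sum_{j=1}^{\infty}2^{cj}\min\{1,B\,2^{dj}\}
= B\sum_{1\le j<J} 2^{(c+d)j} + \sum_{j\ge J} 2^{cj}=:T_1+T_2.
\end{equation*}
Since $c<0$, the sum $T_2$ is bounded by a constant multiple of $2^{cJ}=B^{-c/d}$. For $T_1$, the three sub-cases of the sign of $c+d$ give: $T_1\lesssim B$ when $c+d<0$ (geometric tail); $T_1\lesssim B\cdot 2^{(c+d)J}=B\cdot B^{-(c+d)/d}=B^{-c/d}$ when $c+d>0$; and $T_1\lesssim B\cdot J\asymp B|\log B|=|\log B|B^{-c/d}$ when $c+d=0$ (in which case $B^{-c/d}=B$). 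All three sub-cases are dominated by $B+|\log B|B^{-c/d}$, which combined with the bound for $T_2$ gives the statement.

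The entire argument is routine; the only point requiring any care is keeping track of which of the three sign regimes of $c$ (resp.\ $c+d$) is active in part~(1) (resp.\ part~(2)), and verifying that the logarithmic factor is precisely what one needs at the critical exponent. I do not anticipate any genuine obstacle beyond this bookkeeping. The hypotheses $d<0,\,c+d<0$ in part~(1) and $c<0,\,d>0$ in part~(2) are exactly what is needed to make the relevant geometric series converge on the correct side of $J$, so no additional assumption is required.
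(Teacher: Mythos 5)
Your proposal is correct and follows essentially the same route as the paper: split the sum at the crossover index where $B2^{dj}=1$, bound each piece as a geometric series, and run the three-way case analysis on the sign of $c$ (resp.\ $c+d$), with the logarithm appearing only in the critical case. The bookkeeping with $2^{cJ}=B^{-c/d}$ and the borderline cases matches what the paper leaves as a ``straightforward calculation.''
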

\begin{proof} 
 Set $K = \sum_{j=1}^{\infty} 2^{cj} \min \{ 1, B 2^{dj}\}$. Then,
\[ K = \sum_{ B 2^{dj} \leq 1} 2^{(c+d)j } + \sum_{ B 2^{dj} > 1} 2^{cj}.\]
A straighforward calculation gives the bound for $K$. Suppose that $ d<0, c+d>0$ and $B>1$. Then
\begin{itemize}
\item[-] $K \lesssim 1  $ \quad \quad \quad \quad{for} $ c<0$, 
\item[-] $K \lesssim \log B $\quad \qquad{for}  $c = 0,$ 
\item[-] $K \lesssim B^{-\frac{c}{d}}$ \quad \qquad for $c>0$.
\end{itemize}
In any case we see that $K \lesssim1 + (\log B) B^{-\frac{c}{d}}$.
\noindent Suppose now that $c<0, d>0$ and $B<1$. Then
\begin{itemize}
\item[-] $K \lesssim B$ \qquad \qquad for $ c+d <0,$
\item[-] $K \lesssim \log B \cdot B$ \quad \quad~for $ c+d =0,$ 
\item[-]  $K \lesssim B^{-\frac{c}{d}}$ \qquad \quad for $c+d >0$.
\end{itemize}
\noindent In any case we have $K \lesssim B + |\log B| B^{-\frac{c}{d}}$. The Lemma is proved.
\end{proof}
\begin{thm}\label{K} Assume $p\leq 1$ and
 $(\frac{1}{p} -1)(2n+2) \beta + \alpha < 0$. Then $T_{K_{\alpha,\beta}}$ is bounded on $H^p$.
\end{thm}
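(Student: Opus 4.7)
The plan is to show that $T := T_{K_{\alpha,\beta}}$ sends every $(p,2,s)$-atom into a $(p,2,s,\epsilon)$-molecule with uniformly bounded molecular norm $\mathcal{N}$, where $s = \lfloor (1/p-1)Q\rfloor$ with $Q = 2n+2$ and $\epsilon > 1/p - 1$ is chosen slightly; combined with Theorem \ref{mole} and the atomic decomposition, this will yield $\|Tf\|_{H^p}\lesssim \|f\|_{H^p}$. By left-invariance of convolution I may assume the atom $a$ is centered at the origin, supported in $B(0,r)$, and satisfies $\|a\|_2 \leq r^{-Q(1/p-1/2)}$.

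The easy conditions come first. The vanishing moments are inherited from $a$: for $P \in \mathcal{P}_s$, the substitution $x = z y$ in $\int (K*a)(x)P(x)\,dx$, together with the fact that left translation preserves $\mathcal{P}_s$ on $\mathbb{H}^n_a$, reduces the integral to $\int K(z)\bigl[\int a(y) P(zy)\,dy\bigr]\,dz = 0$. The unweighted $L^2$ bound follows from the dyadic decomposition $T = \sum_j T_j$ and the estimate $\|T_j\|_{L^2\to L^2}\lesssim 2^{j(\alpha - n\beta)}$ recorded in \eqref{L2}: the hypothesis $(1/p-1)Q\beta + \alpha < 0$ combined with $p<1$ and $\beta>0$ forces $\alpha < 0 < n\beta$, so the geometric series converges and $\|Ta\|_2 \lesssim \|a\|_2 \lesssim r^{-Q(1/p-1/2)}$.

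The main work is the weighted estimate $\|(Ta)(x)|x|^{Qb}\|_2$ with $b = 1/2+\epsilon$. I split the integration into the near zone $|x|\leq 2r$ and the far zone $|x|>2r$. The near zone is immediate: $(2r)^{Qb}\|Ta\|_2 \lesssim r^{Qb-Q(1/p-1/2)}$. In the far zone, the support condition $\mathrm{supp}\,K^j \subset \{\rho\sim 2^{-j}\}$ together with $|y|\leq r$ forces $T_j a(x) = 0$ unless $2^{-j}\gtrsim r$, so only scales $j\leq N := \log_2(1/r)$ contribute. For such $j$, I exploit the vanishing moments by subtracting the right Taylor polynomial of the kernel in the $y$-variable; Proposition \ref{appro} then yields
\[
|T_j a(x)| \lesssim r^{s+1}\|a\|_1 \sup_{d(I) = s+1,\,|z|\lesssim r} |Y^I K^j(xz^{-1})|,
\]
with $\|a\|_1\lesssim r^{Q(1-1/p)}$. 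Using the homogeneity bound $|Y^I K^j(w)|\lesssim 2^{j(Q+\alpha+d(I)+\beta|I|)}$ on $\rho(w)\sim 2^{-j}$, integrating against $|x|^{2Qb}$ over the dyadic shell $\{|x|\sim 2^{-j}\}$, and summing from $j=1$ to $N$ via Lemma \ref{sum}, I expect a bound comparable to $r^{Qb-Q(1/p-1/2)}$, matching the near zone.

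The hard part is this geometric sum: the Taylor remainder contributes $r^{s+1}$ from the vanishing moments but pays $2^{j(s+1)(1+\beta)}$ from differentiating the oscillating phase $e^{i\rho^{-\beta}}$, so $s$ and $\epsilon$ must be tuned so that the hypothesis of Lemma \ref{sum} is satisfied with the correct sign of the exponent, and in the borderline case $(1/p-1)Q\in\mathbb{N}$ a logarithmic factor may appear but can be absorbed by slightly enlarging $\epsilon$. The condition $(1/p-1)Q\beta + \alpha < 0$ is precisely what makes the resulting series summable. Once both zones yield weighted bounds of order $r^{Qb-Q(1/p-1/2)}$, a direct computation using $b-a = 1/p - 1/2$ shows $\mathcal{N}(Ta) = \|Ta\|_2^{a/b}\|(Ta)|x|^{Qb}\|_2^{(b-a)/b} \lesssim 1$, so $Ta$ is a uniformly bounded molecule and Theorem \ref{mole} concludes the argument.
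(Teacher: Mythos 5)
Your overall strategy is exactly the paper's: atomic decomposition plus the molecular criterion of Theorem \ref{mole}, the dyadic pieces $T_j$, the $L^2$ bound \eqref{L2} for the unweighted norm and for the near zone $|x|\le 2r$, and Taylor expansion against the vanishing moments together with Lemma \ref{sum} in the far zone. The only organizational difference --- you assemble a single molecule $Ta$ and sum over $j$ inside each of the two norms, whereas the paper sums $\mathcal{N}(K_j*f)^p$ over $j$ --- is immaterial, and your explicit verification of the moment condition for $Ta$ fills in a point the paper leaves implicit.

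There is, however, one step that fails as written. In the far zone you bound $|T_j a(x)|$ \emph{only} by the full-order Taylor remainder, which costs $\bigl(r\,2^{j(\beta+1)}\bigr)^{s+1}$. Since the surviving scales are exactly those with $2^{-j}\gtrsim r$, the quantity $r\,2^{j(\beta+1)}$ ranges up to $r^{-\beta}\gg 1$, so the sum $\sum_{j\le N}2^{cj}\bigl(r2^{j(\beta+1)}\bigr)^{s+1}$ with $c=Q(1-\tfrac1p-\delta)+\alpha$ and $Q=2n+2$ need not give the bound $r^{Qb-Q(1/p-1/2)}$ you are aiming for. Concretely, take $p=1$ (so $s=0$) and $-\beta<\alpha<0$, which is allowed by the hypothesis: the sum is then of order $r^{-\alpha-\beta+Q\delta}$, a \emph{negative} power of $r$, and the molecular norm is not uniformly bounded. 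Note also that Lemma \ref{sum} cannot be applied to this single-branch summand, since its hypothesis involves $\min\{1,B2^{dj}\}$. The fix is to use the Taylor bound only on the scales where $r\,2^{j(\beta+1)}\le 1$ and to switch to the trivial size bound $|T_j a(x)|\lesssim 2^{j(Q+\alpha)}\|a\|_1$ (no cancellation) on the remaining scales; this produces the factor $\min\bigl\{1,(r2^{j(\beta+1)})^{s+1}\bigr\}$ to which Lemma \ref{sum}(2) genuinely applies, and the hypothesis $(\tfrac1p-1)Q\beta+\alpha<0$ then enters, exactly as in the paper, through the positivity of the exponent $\kappa_0=-\tfrac{1}{\beta+1}\bigl[(2n+2)\beta(\tfrac1p-1)+\alpha\bigr]$. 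With that two-branch estimate inserted, your argument closes and coincides with the paper's.
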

\begin{proof}
From the decompostion of kernel \eqref{decom1}, we have
\begin{eqnarray*}
\| K_{\alpha,\beta} * f \|_{H^p}^p \leq \sum_{j \geq 1} \| K_{\alpha,\beta}^{j} * f \|_{H^p}^p.
\end{eqnarray*}
We shall bound the norm $\|K_{\alpha,\beta}^{j} * f\|_{H^p}$ for each $j \in \mathbb{N}$  by some constant multiple of $\| f \|_{H^p}$. Notice that $K_{j} (x,t) = \rho(x,t)^{-(2n+2+\alpha)} e^{i \rho(x,t)^{-\beta}} \chi (2^j \rho(x,t)).$ From the atomic decomposition for $H^p$ space, it is enough to establish the estimate for any atom $f$ supported on $B(0,R)$ with some $R>0$ such that
\begin{equation}\label{32}
\begin{split} \textrm{-}~&\| f \|_{L^2} \leq R^{(2n+2)(\frac{1}{2} - \frac{1}{p})}, \quad \qquad \qquad  \qquad \qquad \qquad
\\
\textrm{-}~&\int f(x) x^{\alpha} dx = 0, \quad \textrm{for all} ~ |\alpha| \leq s = [ (2n+2)(\frac{1}{p} -1 )].\quad \qquad \qquad \qquad \qquad \qquad
\end{split}
\end{equation}
In view of part (2) of  Theorem \ref{mole} it suffices to bound $\mathcal{N}(K_j* f)$. For an admissible quadruple, we choose an $\epsilon > \max\{ \frac{s}{2n+2}, \frac{1}{p} -1 \}= \frac{1}{p} -1$ and set $\epsilon = \frac{1}{p} -1 + \delta$ with some $\delta >0$. Then we have $ a = \delta $ and $ b = \frac{1}{p} - \frac{1}{2} + \delta ~\textrm{in} ~\eqref{ab}.$ We will choose $\delta$ sufficiently small later. Recall that $\mathcal{N}(K_j * f) = \| K_j * f \|_2^{a/b} \cdot \|K_j * f(x) \cdot |x|^{(2n+2)b} \|_2^{1-a/b}$. From the $L^2$ estimate \eqref{L2} we get  
\begin{eqnarray}\label{K1}
\|K_j * f\|_2 \lesssim 2^{j(\alpha - n \beta)} \| f\|_2.
\end{eqnarray}
We have
\begin{eqnarray*}
\| K_j * f(x) \cdot |x|^{(2n+2)b} \|_2^{2} &=& \int_{\mathbb{H}^n} |K_j * f (x) |^2 \cdot |x|^{2(2n+2)b} ~dx = I_1 + I_2, 
\end{eqnarray*}
where 
\[ I_1 = \int_{|x| \leq 2R} |K_j * f(x)|^2 \cdot |x|^{2(2n+2)b} ~dx \quad \textrm{and} \quad I_2 = \int_{|x| > 2R} |K_j * f (x)|^2 \cdot |x|^{2(2n+2)b} ~dx.\]
Then
\begin{equation}\label{K2}
\begin{split}
\sum_{j\geq1} \|K_j * f\|_{H^p}^p           \lesssim &\sum_{j\geq1} \mathcal{N}(K_j*f)^p
\\
       \lesssim& \sum_{j\geq1} \left(\|K_j * f \|_2^{a/b} \cdot (I_1^{1/2(1-a/b)} + I_2^{1/2(1-a/b)})\right)^{p}
\\
       \lesssim& \sum_{j\geq1} \|K_j * f\|_2^{pa/b} \cdot I_1^{p/2(1-a/b)} + \sum_{j\geq1} \|K_j * f\|_2^{pa/b} \cdot I_2^{p/2(1-a/b)}
\end{split}
\end{equation}
Set $S_1 = \sum_{j \geq 1} \|K_j * f\|_2^{pa/b} \cdot I_1^{p/2(1-a/b)}$ and $S_2 = \sum_{j \geq 1} \|K_j * f\|_2^{pa/b} \cdot I_2^{p/2(1-a/b)}$. Then it is enough to show that $S_1 \lesssim1 $ and $S_2 \lesssim 1$.
We use \eqref{L2} and   \eqref{32}   to bound $I_1$ as follows.
\begin{equation}\label{K3}
\begin{split}
I_1  \lesssim &  \int_{\mathbb{H}^n} | f * K_j (x)|^2 dx\cdot R^{2(2n+2)b } \lesssim  2^{2j(\alpha - n\beta)} \| f\|_2^{2} \cdot R^{2(2n+2)b }
\\
 \lesssim &~ 2^{2j(\alpha- n\beta)}  R^{2(2n+2)b } \cdot R^{(2n+2) (1 - 2/p)} 
\\
\lesssim  &~ 2^{2j(\alpha - (n+1/2) \beta)} R^{2(2n+2) \delta},
\end{split}
\end{equation}
where the last inequality comes from \eqref{32}. From \eqref{K1} and \eqref{K3} we have
\begin{eqnarray*}
\| K_j * f \|_2^{a/b} \cdot I_1^{1/2(1-a/b)}  &\lesssim& \left\{ 2^{j(\alpha-n \beta)} R^{(2n+2)(1/2-1/p)} \right\}^{a/b} \cdot \left\{ 2^{j(\alpha - n \beta)} \cdot R^{(2n+2) \delta} \right\}^{(1-a/b)}.
\\
&=& 2^{j(\alpha - n \beta)},
\end{eqnarray*}
where the equality comes from the calculation $(\frac{1}{2} - \frac{1}{p})\frac{a}{b} + a (1-\frac{a}{b}) = \frac{a}{b} ( \frac{1}{2}-\frac{1}{p} - a) + a = \frac{a}{b}(-b) + a = 0.$  Thus we have  $S_1 \lesssim \sum_{ j \geq 1} 2^{j(\alpha-n\beta)p} \lesssim 1.$
\

Now we  consider $I_2$ and $S_2$. We have $I_2 = 0$ for $R>1$ since the support of $K_j * f$ is contained in the subset $\{ x  : |x| \leq 1 + R \}$ which is a subset of $ \{ x  : |x| < 2R\}$ for $R > 1$. Thus we may only consider the case $R\leq 1$.  In the following integral expression
\[
(K_j * f) (x) = \int K_j (x y^{-1}) f (y) dy,
\]
We have $|xy^{-1}| \leq 2^{-j} $ and $|y| \leq R$. These imply $|x| \leq |xy^{-1}| + |y| \leq 2^{-j} + R$. It means that  $I_2 =0$ for $2^{-j} <R$. Thus we only need to consider $ j \in \mathbb{N}$ such that $2^{-j} \geq R$, for which we have $|x| \leq 2^{-j+1}$ for $x \in \textrm{Supp}(K_j * f)$. Then we get
\begin{eqnarray}\label{l_2}
I_2 = \int_{|x|>2R} | f * K_j (x)|^2 \cdot |x|^{2(2n+2)b} dx \lesssim \int_{|x|>2R} | f * K_j (x)|^2  dx \cdot 2^{-2(2n+2)bj}.
\end{eqnarray}
From Proposition \ref{appro}, for any $I \in \mathbb{N}_0$, there is a polynomial $P_j^{x}$ of degree $\leq I$ such that
\begin{equation}\label{ap}
\begin{split}
| K_j (xy^{-1}) - P_j^{x} (y) |  \lesssim & |y|^{I+1} \sup_{|\alpha| \leq I+1} | X^{\alpha} K_j (xy^{-1})|
\\
 \lesssim & |y|^{I+1} 2^{j(2n+2+\alpha)} 2^{j(\beta+1)(I+1)}.
\end{split}
\end{equation}
From \eqref{32} we get the identity for $0 \leq I \leq s,$
\[ K_j * f (x) = \int (K_j (x y^{-1}) - P_j^{x}(y)) f (y)dy. \] 
Note that $f(y)$ has support in $|y| \leq R$, then from \eqref{32} and \eqref{ap} we get 
\begin{eqnarray*}
|K_j * f (x)| &\lesssim &R^{I+1} 2^{j(2n+2+\alpha)} 2^{j(\beta+1)(I+1)} \int_{|y| \leq R} |f(y)| dy
\\
&\lesssim &R^{I+1} 2^{j(2n+2+\alpha)} 2^{j(\beta+1)(I+1)} R^{\frac{1}{2}(2n+2)} \|f\|_2
\\
&\lesssim & 2^{j(2n+2+\alpha)} (R 2^{j(\beta+1)})^{(I+1)} R^{(2n+2)(1-\frac{1}{p})}.
\end{eqnarray*}
Now we can estimate \eqref{l_2} as
\begin{eqnarray*}
I_2 &\lesssim& 2^{-2(2n+2)bj} 2^{-j(2n+2)} \left\{2^{j(2n+2+\alpha)} (R 2^{j(\beta+1)})^{(I+1)} R^{(2n+2)(1-\frac{1}{p})} \right\}^2.
\\
&=& 2^{2j \{(2n+2)(1-\frac{1}{p} - \delta) + \alpha\}} (R 2^{j(\beta+1)})^{2(I+1)} R^{2(2n+2)(1-\frac{1}{p})}.
\end{eqnarray*}
Here we may choose $I=0$ or $I=s$, which gives 
\begin{eqnarray*}
I_2 \lesssim 2^{2j \{(2n+2)(1-\frac{1}{p} -\delta) + \alpha\}} R^{2(2n+2)(1-\frac{1}{p})} \min \{ 1, (R 2^{j(\beta+1)})^{2(s+1)} \}.
\end{eqnarray*}
Now we have
\begin{equation}\label{I2}
\begin{split}
\|K_j*f\|_2^{a/b} \cdot I_2^{\frac{1}{2}(1-a/b)} \lesssim \{ 2^{j(\alpha -n\beta)} R^{(2n+2)(1/2-1/p)}\}^{a/b} &
\\
 \cdot \{2^{j \{(2n+2)(1-\frac{1}{p} -\delta) + \alpha\}} R^{(2n+2)(1-\frac{1}{p})} & \min\left( 1, (R 2^{j(\beta+1)})^{(s+1)}\right) \}^{(1-a/b)}.
\end{split}
\end{equation}
From $p\leq 1$ and $\alpha<0$  we have  $(2n+2)(1-\frac{1}{p} - \delta) + \alpha < 0 $. Thus, if $\min (1, (R2^{j(\beta+1)})^{s+1}) =1$ the exponent of $2^j$ is smaller than zero provided $a$ is small enough. Recall that $R \leq 1$. Then, using (2)  in Lemma \ref{sum} we get
\begin{eqnarray*}
\sum_{j \geq1} \|K_j * f \|_2^{pa/b} \cdot I_2^{\frac{p}{2}(1-a/b)} &\lesssim 
& R^{p \mu_{\delta}} + |\log R| \cdot R^{p \kappa_{\delta}},
\end{eqnarray*}
where 
\begin{eqnarray*}
R^{p\mu_{\delta}} &=& (R^{(2n+2)(1/2-1/p)})^{pa/b} \cdot (R^{(2n+2)(1-\frac{1}{p}) + (s+1)})^{p(1-a/b)}, 
\\
R^{p\kappa_{\delta}} &=&\left[ R^{-\frac{1}{\beta+1}[\alpha - (n+1/2) \beta] } R^{(2n+2) (1/2-1/p)} \right]^{p \delta/b} 
 \cdot \left[ R^{-\frac{1}{\beta+1} [ (2n+2)(1-1/p -\delta) + \alpha ]} R^{(2n+2)(1-1/p)} \right]^{p(1-\delta/b)}.
\end{eqnarray*}
Observe that
\[ \mu_{0} =  \{ (2n+2)(1-\frac{1}{p}) + (s+1)\} >0,
\]
and 
\[
\kappa_0 = - \frac{1}{\beta+1}[(2n+2)\beta (\frac{1}{p} -1) + \alpha] > 0.
\] 
Thus, for $\delta$ small enough, we have $\mu_{\delta}, \kappa_{\delta} >0$ and since $R \leq 1,$
\begin{eqnarray}\label{E3}
\sum_{j\geq 1} \|K_j * f\|_2^{pa/b} \cdot I_2^{\frac{p}{2} (1-a/b)} \lesssim R^{p \mu_{\delta}} + |\log R| \cdot R^{p \kappa_{\delta}} \leq 1.
\end{eqnarray}
We then conclude that $S_2 \lesssim 1$. The proof is complete.  
\end{proof}
We now consider $T_{L_{\alpha,\beta}}$. Observe that the oscillating term $e^{i \rho(x,t)^{\beta}}$ exhibits different behavior whether $0 < \beta <1$ or $\beta>1$. As $\rho$ goes to infinity, the oscillation becomes faint if for the case $0 < \beta <1$. In contrary, the oscillation grows to infinity for $ \beta >1$. Hence we deal with the two cases seperately.
\begin{thm}\label{L1}
Assume $0 < \beta < 1$ and $p \leq 1$ and $(\frac{1}{p} -1)(2n+2) \beta + \alpha < 0$. Then the operator $T_{L_{\alpha,\beta}}$ is bounded on $H^p$ space. 
\end{thm}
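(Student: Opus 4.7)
My plan is to mirror the proof of Theorem \ref{K}, adapting to the fact that the dyadic pieces $L^j_{\alpha,\beta}$ from \eqref{decom2} are supported on the large-scale annulus $\rho\sim 2^j$ rather than near the origin. By the atomic decomposition of $H^p$ and Theorem \ref{mole}, it is enough to prove $\sum_{j\ge 1}\mathcal N(L^j_{\alpha,\beta}\ast f)^p\lesssim 1$ for every $(p,2,s)$-atom $f$ supported on some ball $B(0,R)$. Following \eqref{K2}, I will expand $\mathcal N$ with $a=\delta$, $b=1/p-1/2+\delta$ (for a small $\delta>0$ to be chosen), split the weighted $L^2$ integral at $|x|=2R$ into $I_1+I_2$, and obtain two sums $S_1$ and $S_2$. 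The $L^2$ bound $\|L^j\ast f\|_2\lesssim 2^{j(\alpha-n\beta)}\|f\|_2$ from the remark containing \eqref{L2} plays the role of \eqref{K1}.

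The inner sum $S_1$ is treated by a direct transcription of \eqref{K3}: $|x|^{(2n+2)b}\le (2R)^{(2n+2)b}$ combined with the $L^2$ estimate gives $\|L^j\ast f\|_2^{a/b}I_1^{(1-a/b)/2}\lesssim 2^{j(\alpha-n\beta)}$. Since $p\le 1$ and $\beta>0$, the hypothesis $(1/p-1)(2n+2)\beta+\alpha<0$ forces $\alpha<0$ and hence $\alpha-n\beta<0$, so the geometric sum in $j$ is $\lesssim 1$.

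For $S_2$ the new feature is that $\mathrm{supp}(L^j\ast f)\subset\{2^{j-1}-R\le\rho\le 2^{j+1}+R\}$, so $I_2=0$ whenever $2^{j+1}\le R$ and otherwise $|x|\sim\max(R,2^j)$ on the relevant support. Applying Proposition \ref{appro}, I will Taylor-expand $L^j(xy^{-1})$ in $y$ about $0$; the vanishing moments of $f$ eliminate the polynomial and leave a remainder $R^{I+1}\sup|X^{I+1}L^j|$. A direct calculation on $\rho\sim 2^j$ yields the $L$-analogue of \eqref{ap}:
\begin{eqnarray*}
|X^I L^j(x)|\lesssim 2^{-j(Q-\alpha)}\cdot 2^{-j(1-\beta)I},
\end{eqnarray*}
where the crucial sign $-(1-\beta)<0$ arises because each derivative of $e^{i\rho^\beta}$ contributes $\rho^{\beta-1}=\rho^{-(1-\beta)}$. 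For $0<\beta<1$ the cancellation therefore improves as $j$ grows. Taking $I=0$ and $I=s$, and inserting $|x|^{Qb}\lesssim 2^{jQb}$ on an annulus of measure $\lesssim 2^{jQ}$, I obtain the analogue of \eqref{I2}; summing via Lemma \ref{sum}(1) for $R\ge 1$ and Lemma \ref{sum}(2) for $R<1$ then delivers the analogue of \eqref{E3}, and hence $S_2\lesssim 1$.

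The main technical obstacle is the exponent bookkeeping in $S_2$. For the resulting geometric series (after Taylor cancellation) to converge, one needs $(1-\beta)(s+1)>(1/p-1)Q+\alpha$; using the admissibility $s\ge[(1/p-1)Q]$ and the hypothesis $\alpha<-(1/p-1)Q\beta$, this reduces to the identity $(1-\beta)(1/p-1)Q\ge(1/p-1)Q-(1/p-1)Q\beta$. Thus the factor $\beta$ in the hypothesis is precisely what enables Taylor cancellation at the correct rate, and the logarithmic loss from Lemma \ref{sum} is absorbed by choosing $\delta>0$ sufficiently small.
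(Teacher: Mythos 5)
Your proposal follows essentially the same route as the paper's proof of Theorem \ref{L1}: atomic plus molecular reduction, the split of the weighted integral at $|x|=2R$, the $L^2$ bound \eqref{L2} for $I_1$, the support observation killing $I_2$ for $2^j\lesssim R$, and the Taylor-remainder gain of $2^{j(\beta-1)}$ per derivative for $0<\beta<1$, with the same exponent verification $(2n+2)(\tfrac1p-1)\beta+\alpha+(2n+2)\delta<0$. One minor correction: for $R<1$ the factor $R2^{j(\beta-1)}$ is always $\le 1$ (since $\beta-1<0$), so the minimum degenerates and the sum is a plain geometric series --- Lemma \ref{sum}(2) as stated requires $d>0$ and does not literally apply there, which is exactly how the paper handles that case.
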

\begin{proof} From \eqref{decom2} we have
\begin{eqnarray}\label{Lsplit1}
\| L_{\alpha,\beta} * f \|_{H^p}^p \leq \sum_{j \geq 1} \| L_{\alpha,\beta}^{j} * f \|_{H^p}^p.
\end{eqnarray}
We now estimate each norm $\|L_{\alpha,\beta}^{j} * f\|_{H^p}$ by $\| f \|_{H^p}$. From the atomic decomposition for $H^p$ space, we may choose $f$ as an atom supported on $B(0,R)$ with some $R>0$, which satisfies
\begin{equation}\label{new}
\begin{split}
\textrm{-}~& \| f \|_{L^2} \leq R^{(2n+2)(\frac{1}{2} - \frac{1}{p})}, \quad \qquad \qquad  \qquad \qquad \qquad
\\
\textrm{-}~&\int f(x) x^{\alpha} dx = 0, \quad \textrm{for all} ~ |\alpha| \leq s = [ (2n+2)(\frac{1}{p} -1 )].\quad \qquad \qquad  \qquad \qquad \qquad
\end{split}
\end{equation}
From (b) in Theorem \ref{mole}, it suffices to estimate $\mathcal{N}(L_j* f)$. For an admissible quadruple $(p,q,s, \epsilon)$ we may choose any $\epsilon > \max\{ \frac{s}{2n+2}, \frac{1}{p} -1 \}= \frac{1}{p} -1$. Simply we let $\epsilon = \frac{1}{p} -1 + \delta$ with some $\delta >0$. Then we have $a = \delta $ and $
b = \frac{1}{p} - \frac{1}{2} + \delta.$ for \eqref{ab}. We will choose $\delta$ sufficiently small later. 
\

From \eqref{L2} we have
\[
\|L_j * f\|_2 \lesssim 2^{j(\alpha - n \beta)} \| f\|_2.
\] 
We have
\begin{equation}\label{28}
\begin{split}
\| L_j * f(x) \cdot |x|^{(2n+2)b} \|_2^{2} =& \int_{\mathbb{H}^n} |L_j * f (x) |^2 \cdot |x|^{2(2n+2)b} dx = I_1 + I_2,
\end{split}
\end{equation}
where 
\[ I_1 = \int_{|x|\leq 2R }|L_j * f(x)|^2 \cdot |x|^{2(2n+2)b} ~dx \quad \textrm{and} \quad I_2 = \int_{|x|> 2R} |L_j * f(x)|^2 \cdot |x|^{2(2n+2)b} ~ dx.
\]
Then,
\begin{equation}\label{Lsplit2}
\begin{split}
\sum_{j\geq1} \|L_j * f\|_{H^p}^p \lesssim &\sum_{j\geq1} \mathcal{N}(L_j*f)^p
\\
\lesssim& \sum_{j\geq1} \left(\|L_j * f \|_2^{a/b} \cdot (I_1^{1/2(1-a/b)} + I_2^{1/2(1-a/b)})\right)^{p}
\\
\lesssim& \sum_{j\geq1} \|L_j * f\|_2^{pa/b} \cdot I_1^{p/2(1-a/b)} + \sum_{j\geq1} \|L_j * f\|_2^{pa/b} \cdot I_2^{p/2(1-a/b)}
\end{split}
\end{equation}
Set $S_1 =  \sum_{j\geq1} \|L_j * f\|_2^{pa/b} \cdot I_1^{p/2(1-a/b)} $ and $S_2 = \sum_{j\geq1} \|L_j * f\|_2^{pa/b} \cdot I_2^{p/2(1-a/b)}$. Then it is enough to show that $S_1 \lesssim1 $ and $S_2 \lesssim 1$.
First we estimate $I_1$  with $L^2$estimates \eqref{L2} as follows
\begin{eqnarray*}
I_1 & \lesssim &  \int_{\mathbb{H}^n} | f * L_j (x)|^2 dx\cdot R^{2(2n+2)b } \lesssim  2^{2j(\alpha - n\beta)} \| f\|_2^{2} \cdot R^{2(2n+2)b }
\\
& \leq &  2^{2j(\alpha- n\beta)}  R^{2(2n+2)b } \cdot R^{(2n+2) (1 - 2/p)}=  2^{2j(\alpha - (n+1/2) \beta)} R^{2(2n+2) \delta}.
\end{eqnarray*}
Thus we can bound $\|L_j * f\|_2^{a/b} \cdot I_1^{\frac{1}{2}(1-a/b)}$ as
\begin{eqnarray*}
\| L_j * f \|_2^{a/b} \cdot I_1^{1/2(1-a/b)} & \lesssim& \left\{ 2^{j(\alpha-n \beta)} R^{(2n+2)(1/2-1/p)} \right\}^{a/b} \cdot \left\{ 2^{j(\alpha - n \beta)} \cdot R^{(2n+2) \delta} \right\}^{(1-a/b)}
\\
& =& 2^{j (\alpha-n \beta)},
\end{eqnarray*}
and we have $S_1 \lesssim \sum_{ j \geq 1} 2^{j(\alpha-n\beta)p} \lesssim 1.$

For $I_2$ we consider the two cases $R >1$ and $R\leq 1$.

\noindent $Case~ (i)$:  Suppose $R>1$. In the integral
\begin{eqnarray*}
(L_j * f) (x) = \int L_j (x y^{-1}) f (y) dy,
\end{eqnarray*}
we have $|xy^{-1}| \leq 2^{j} $ and $|y| \leq R$, which imply $|x| \leq |xy^{-1}| + |y| \leq 2^{j} + R$. Therefore, in \eqref{28}, we have that  $I_2 =0$ for $2^{j} <R$. Thus we only need to consider $j$ with $2^{j} \geq R$. Then we have $|x| \leq 2^{j+1}$ for $x$ in the support of $L_j * f $, and so
\begin{eqnarray}\label{I_2}
I_2 \lesssim \int_{|x|>2R} | f * L_j (x)|^2  dx \cdot 2^{2(2n+2)bj}.
\end{eqnarray}
By \eqref{formula} we have 
\begin{eqnarray*}
| L_j (xy^{-1}) - P_j^{x} (y) | & \lesssim & |y|^{I+1} \sup_{|\alpha| \leq I+1} | X^{\alpha} L_j (xy^{-1})|
\\
& \lesssim & |y|^{I+1} 2^{-j(2n+2-\alpha)} 2^{j(\beta-1)(I+1)}.
\end{eqnarray*}
Since $f(y)$ has support in $|y| \leq R$ and \eqref{new}, we have
\begin{eqnarray*}
|L_j * f (x)| &\lesssim &R^{I+1} 2^{-j( 2n+2-\alpha)} 2^{j(\beta-1)(I+1)} \int_{|y| \leq R} |f(y)| dy
\\
&\lesssim &R^{I+1} 2^{-j(2n+2-\alpha)} 2^{-j(\beta-1)(I+1)} R^{\frac{1}{2}(2n+2)} \|f\|_2.
\\
&\lesssim &  2^{-j(2n+2-\alpha)} (R 2^{-j(\beta-1)})^{(I+1)} R^{(2n+2)(1-\frac{1}{p})}.
\end{eqnarray*}
Thus we can estimate \eqref{I_2} as
\begin{eqnarray*}
I_2 &\lesssim& 2^{2(2n+2)bj} 2^{j(2n+2)} \left\{2^{-j(2n+2-\alpha)} (R 2^{-j(\beta-1)})^{(I+1)} R^{(2n+2)(1-\frac{1}{p})} \right\}^2.
\\
&=& 2^{2j \{(2n+2)(1/p + 1 + \delta) + \alpha\}} (R 2^{j(\beta-1)})^{2(I+1)} R^{2(2n+2)(1-\frac{1}{p})}.
\end{eqnarray*}
Here we may choose $I=0$ and $I=s$, which gives
\begin{eqnarray*}
I_2 \lesssim 2^{2j \{(2n+2)(1/p -1+\delta) + \alpha\}} R^{2(2n+2)(1-\frac{1}{p})} \min \{ 1, (R 2^{j(\beta-1)})^{2(s+1)} \}.
\end{eqnarray*}
Thus,
\begin{equation}\label{15}
\begin{split}
\|L_j*f\|_2^{a/b} \cdot I_2^{\frac{1}{2}(1-a/b)} \lesssim \{ 2^{j(\alpha -n\beta)} R^{(2n+2)(1/2-1/p)}\}^{a/b} &
\\
 \cdot \{2^{j \{(2n+2)(1/p -1+\delta) + \alpha\}} R^{(2n+2)(1-\frac{1}{p})} & \min\left( 1, (R 2^{j(\beta-1)})^{(s+1)}\right) \}^{(1-a/b)}.
\end{split}
\end{equation}
Provided $\delta$ is small enough, we have
\begin{eqnarray*}
(2n+2)(\frac{1}{p} - 1 + \delta) + \alpha + (\beta -1)(s+1) &=&(2n+2)(\frac{1}{p} -1 + \delta) + \alpha + (\beta-1) ([(2n+2)(\frac{1}{p}-1)]+1)
\\
& < & (2n+2)(\frac{1}{p} -1 +\delta) + \alpha + (\beta-1)(2n+2)(\frac{1}{p} -1)
\\
&=& (2n+2)(\frac{1}{p} -1)\beta + \alpha + (2n+2) \delta< 0.
\end{eqnarray*}
Therefore the index of $2^{j}$ in \eqref{15} with $(R 2^{j(\beta-1)})^{s+1}$ is negative for small $\delta >0$. Remind that $R>1$. Then, from (1) in Lemma \ref{sum} we have
\begin{eqnarray*}
\sum_{j \geq 1} \|L_j * f\|_2^{pa/b} \cdot I_2^{\frac{p}{2}(1-a/b)} &\lesssim& R^{p \mu_{\delta}} +  \log(R+1) R^{p \kappa_{\delta}},
\end{eqnarray*}
where
\begin{eqnarray*}
R^{p \mu_{\delta}} &=&  R^{(2n+2)(1/2-1/p) \frac{pa}{b} + (2n+2)(1-1/p) p(1-a/b)} ,
\\
R^{p \kappa_{\delta}} &=& [R^{-\frac{1}{1-\beta}[\alpha - n \beta]} R^{(2n+2)(1/2-1/p)}]^{p\delta/b} \cdot [R^{\frac{1}{1-\beta}\{(2n+2)(1/p-1+\delta) + 2\alpha\}} \cdot R^{(2n+2)(1-1/p)}]^{p(1-a/b)}.
\end{eqnarray*}
Because $p \leq 1$, we easily see that $\mu_{\delta} \leq 0$. Moreover,
\begin{eqnarray*}
\kappa_{0} = \frac{1}{1-\beta}\{ \beta(2n+2)(\frac{1}{p} -1) + \alpha \} < 0.
\end{eqnarray*}
From this, we get $\kappa_{\delta} < 0$ for $\delta$ small enough. Therefore we have
\[
S_2 \lesssim R^{\mu_{\delta}} + \log (R+1) R^{\kappa_{\delta}} \lesssim 1.\]

\noindent $Case~ (ii)$: Suppose $R \leq 1.$ We see that $\textrm{min} ( 1, (R 2^{j(\beta-1)(s+1)})) = R 2^{j (\beta-1)(s+1)}$and \eqref{15} becomes  
\begin{eqnarray*}
\sum_{j\geq1} \|L_j*f\|_2^{pa/b} \cdot I_2^{\frac{p}{2}(1-a/b)} \lesssim \{ 2^{j(\alpha-n\beta)} R^{(2n+2)(1/2-1/p)}\}^{pa/b} 
\\
\{2^{j(2n+2)(1/p-1+ \delta) + \alpha} R^{(2n+2)(1-\frac{1}{p})} \cdot (R2^{j(\beta-1)})^{(s+1)}\}^{p(1-a/b)}.
\end{eqnarray*}
Because the power of $2^j$ is negative, provided $\delta$ is small enough, we get 
\begin{eqnarray*}
\sum_{j\geq1} \|L_j * f\|_2^{pa/b}\cdot I_2^{\frac{p}{2}(1-a/b)} &\lesssim& R^{(2n+2)(1/2-1/p) \frac{pa}{b}} \cdot R^{\{ (2n+2)(1-\frac{1}{p}) + (s+1)\} p(1-\frac{a}{b})}
\\
&=:& R^{p \mu_{\delta}}.
\end{eqnarray*}
Observe that
\begin{eqnarray*}
{\mu_{0}} = (2n+2)(1-\frac{1}{p} ) + (s+1) = (2n+2)(1-\frac{1}{p}) + ([(2n+2)(\frac{1}{p}-1)] +1)> 0.
\end{eqnarray*}
Thus we have $\mu_{\delta} > 0$ for $\delta$ small enough. Now we get
\[
\sum_{j \geq 1} \|L_j * f\|_2^{pa/b} \cdot I_2^{\frac{p}{2}(1-a/b)} \lesssim R^{p \mu_{\delta}} \leq 1.
\]
We then conclude that $S_2 \lesssim 1$. The proof is complete.
\end{proof}
We now establish the same result for the case $\beta > 1$.
\begin{thm}
For $1 < \beta $, $p \leq 1$, if $(\frac{1}{p} -1)(2n+2) \beta + \alpha < 0$,
the operator $T_{L_{\alpha,\beta}}$ is bounded on $H^p$ space. 
\end{thm}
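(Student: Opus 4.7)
The plan is to follow the argument of Theorem \ref{L1} line by line, replacing the sign-sensitive manipulations by their analogues for $\beta > 1$. After the dyadic decomposition \eqref{decom2}, the $H^p$ atomic decomposition reduces the claim to bounding $\sum_{j\geq 1}\mathcal{N}(L_j*f)^p \lesssim 1$ uniformly over $(p,2,s)$-atoms $f$ supported in $B(0,R)$ with $s = [(2n+2)(1/p-1)]$, choosing admissible parameters $a = \delta$, $b = 1/p - 1/2 + \delta$ for a small $\delta > 0$ to be fixed at the end. The $L^2$ factor of $\mathcal{N}$ is again bounded by \eqref{L2} as $\|L_j * f\|_2 \lesssim 2^{j(\alpha - n\beta)}\|f\|_2$.

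Next I would split the weighted integral $\|L_j*f(x)\cdot|x|^{(2n+2)b}\|_2^2 = I_1 + I_2$ over $|x|\leq 2R$ and $|x|>2R$. The estimate on $I_1$ transfers verbatim from Theorem \ref{L1} and produces the summable geometric series $\sum_j 2^{jp(\alpha-n\beta)} \lesssim 1$. For $I_2$, I would treat $R > 1$ and $R \leq 1$ separately, exploiting the support property $\rho \sim 2^j$ on the kernel of $L_j$ to force $I_2 = 0$ unless $2^j \gtrsim R$, and then invoking Proposition \ref{appro} with Taylor polynomials of degree $I \in \{0, s\}$ together with the vanishing moments of $f$ to obtain
\[
|L_j * f(x)| \lesssim 2^{-j(2n+2-\alpha)}\,(R\,2^{j(\beta-1)})^{I+1}\,R^{(2n+2)(1 - 1/p)}.
\]
Combining $I = 0$ and $I = s$ yields a bound on $I_2$ containing the factor $\min\{1, (R\,2^{j(\beta-1)})^{2(s+1)}\}$, and Lemma \ref{sum} then sums the series in $j$. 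The critical sign check--that the combined exponent of $2^j$ in the Taylor-refined regime is negative--reduces, after absorbing the atom normalization, to the inequality $(2n+2)(1/p - 1)\beta + \alpha + O(\delta) < 0$, which is exactly the hypothesis $(\tfrac{1}{p}-1)(2n+2)\beta + \alpha < 0$ for $\delta$ small, using $s+1 > (2n+2)(1/p-1)$.

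The principal obstacle, and the only genuine departure from the proof of Theorem \ref{L1}, is that for $\beta > 1$ the factor $2^{j(\beta-1)(s+1)}$ now \emph{grows} with $j$, so the Taylor-refined bound $(R\,2^{j(\beta-1)})^{s+1}$ in $\min\{1, (R\,2^{j(\beta-1)})^{s+1}\}$ improves on the trivial bound $1$ only in the regime $j \lesssim (\beta - 1)^{-1}\log(1/R)$; beyond that threshold the trivial bound is used, and convergence of the tail is supplied by the $L^2$-only estimate. One must therefore carefully verify that the exponents on $R$ assemble with the correct signs in both regimes of the $\min$--producing $R^{p\mu_\delta}$ and $|\log R|\,R^{p\kappa_\delta}$ with $\mu_\delta, \kappa_\delta$ of the right signs on the respective domains $R \leq 1$ and $R > 1$. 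This sign bookkeeping, performed by combining Lemma \ref{sum}(1) and (2) as in Theorem \ref{L1}, is made consistent precisely by the hypothesis $(\tfrac{1}{p}-1)(2n+2)\beta + \alpha < 0$, and completes the proof.
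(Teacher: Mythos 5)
Your overall plan is exactly the paper's: dyadic decomposition, reduction to $(p,2,s)$-atoms via the molecular norm $\mathcal{N}$, the split into $I_1$ (handled by the $L^2$ bound \eqref{L2}) and $I_2$ (handled by the Taylor expansion of Proposition \ref{appro} with $I\in\{0,s\}$ and the moment conditions), and finally Lemma \ref{sum}. However, your stated ``critical sign check'' is the wrong one for $\beta>1$. You claim the exponent of $2^j$ in the Taylor-refined regime is negative and that this reduces to the hypothesis via $s+1>(2n+2)(\tfrac1p-1)$. That chain of inequalities,
\[
(2n+2)(\tfrac1p-1+\delta)+\alpha+(\beta-1)(s+1)\;<\;(2n+2)(\tfrac1p-1+\delta)+\alpha+(\beta-1)(2n+2)(\tfrac1p-1),
\]
requires $\beta-1<0$ to point in the direction you need; it is the computation from Theorem \ref{L1}, and for $\beta>1$ it reverses, so the Taylor-regime exponent can perfectly well be positive. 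What must be negative is instead the exponent in the regime where the $\min$ equals $1$, namely $(2n+2)(\tfrac1p-1+\delta)+\alpha$, and this follows from the hypothesis precisely because $\beta>1$ gives $(2n+2)(\tfrac1p-1)+\alpha\le(2n+2)(\tfrac1p-1)\beta+\alpha<0$ (this is \eqref{2n+2} in the paper). The possibly positive Taylor-regime exponent is then absorbed by Lemma \ref{sum} into the term $|\log R|\,R^{p\kappa_\delta}$ with
\[
\kappa_0=\frac{1}{1-\beta}\bigl[\beta(2n+2)(\tfrac1p-1)+\alpha\bigr]>0,
\]
which is harmless since that case only occurs for $R\le1$; note that the full strength of the hypothesis is used here, through the sign of $\kappa_0$, and not where you placed it.

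Your final paragraph shows you understand the correct mechanism (the Taylor bound only helps for small $j$, and the tail converges by the trivial bound plus the $L^2$ estimate), so the slip is localized and fixable; but as written, the exponent computation in your second paragraph does not establish convergence for $\beta>1$. Also note that in the paper's case $R>1$ no appeal to Lemma \ref{sum} is needed at all: there $R\,2^{j(\beta-1)}\ge1$ for every $j\ge1$, so the $\min$ is identically $1$ and the sum is a plain geometric series, with the leftover powers of $R$ nonpositive.
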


\begin{proof}
By arguing as in  \eqref{Lsplit1}--\eqref{Lsplit2} in the proof of Theorem \ref{L1} to obtain the following
\begin{equation}\label{Lsplit?}
\begin{split}
\sum_{j\geq1} \|L_j * f\|_{H^p}^p \lesssim& \sum_{j\geq1} \|L_j * f\|_2^{pa/b} \cdot I_1^{p/2(1-a/b)} + \sum_{j\geq1} \|L_j * f\|_2^{pa/b} \cdot I_2^{p/2(1-a/b)},
\end{split}
\end{equation}
 where $I_1$ and $I_2$ are defined as in \eqref{28}.
 Because the estimate for $I_1$  is exactly same with the proof of Theorem \ref{L1}, we only deal with $I_2$. As before, we have

\begin{equation}\label{K4}
\begin{split}
\|L_j*f\|_2^{a/b} \cdot I_2^{\frac{1}{2}(1-a/b)} \lesssim \{ 2^{j(\alpha -n\beta)} R^{(2n+2)(1/2-1/p)}\}^{a/b} &
\\
 \cdot \{2^{j \{(2n+2)(1/p -1+\delta) + \alpha\}} R^{(2n+2)(1-\frac{1}{p})} & \min\left( 1, (R 2^{j(\beta-1)})^{(s+1)}\right) \}^{(1-a/b)}
\end{split}
\end{equation}

\noindent$Case~ (i)$: Suppose $R>1$.
As for the case $\beta < 1$, we have $I_2 =0$ if $2^j < R$ and we only need  consider $j$ with $2^j \geq R$. Since $R 2^{j (\beta-1)} \geq 1$, we estimate $I_2$ as
\begin{eqnarray*}
I_2 \lesssim 2^{j\{2(2n+2)[1/p-1+ \delta] + 2\alpha\}}R^{2(2n+2)(1-1/p)}.
\end{eqnarray*}
Note that
\begin{eqnarray}\label{2n+2}
(2n+2)(\frac{1}{p} -1) + \alpha < (2n+2) (\frac{1}{p} -1 ) \beta + \alpha < 0. 
\end{eqnarray}
Thus, if $\delta$ is sufficiently small,  we have $(2n+2)(1/p - 1 + \delta) + \alpha < 0$ and we can sum \eqref{K4} as 
\begin{eqnarray}\label{R}
\sum_{j\geq1}\|L_j * f\|_2^{pa/b}\cdot I_2^{\frac{p}{2}(1-a/b)} \lesssim R^{(2n+2)(1/2-1/p)\frac{pa}{b}} \cdot R^{\{(2n+2)(1-\frac{1}{p})\}p(1-\frac{a}{b})} \leq 1,
\end{eqnarray}
where the last inequality holds because $ p\leq 1 $ and $R>1$.

\noindent$Case ~(ii)$: Suppose $R \leq1$. From \eqref{2n+2}, using (1) in Lemma \ref{sum} we have
\begin{eqnarray*}
\sum_{j\geq1} \|L_j * f\|_2^{pa/b}\cdot I_2^{\frac{1}{2}p(1-a/b)} &\lesssim& R^{p \mu_{\delta}} + |\log R| ~R^{p \kappa_{\delta}},
\end{eqnarray*}
where
\begin{eqnarray*}
R^{\mu_{\delta}} &=& R^{(2n+2)(1/2-1/p)\frac{a}{b}} \cdot R^{\{(2n+2)(1-1/p) + (s+1)\}(1-\frac{a}{b})},
\\
R^{\kappa_{\delta}} &=&  R^{(2n+2)(1/2-1/p)\frac{a}{b}} \cdot \{R^{\frac{1}{1-\beta} \{ (2n+2) (1/p -1 +\delta) + \alpha\}} R^{(2n+2) (1-1/p)}\}^{1-a/b}.
\end{eqnarray*}
Observe that
\begin{eqnarray*}
\mu_0 = (2n+2)(1-\frac{1}{p}) + (s+1) = (2n+2)(1-\frac{1}{p}) + [(2n+2)(\frac{1}{p}-1)]+1 >0
\end{eqnarray*}
and
\begin{eqnarray*}
\kappa_0 = \frac{1}{1-\beta} \{ (2n+2) (1/p -1 ) + \alpha\} + (2n+2) (1- 1/p) = \frac{1}{1-\beta} \{ \beta (2n+2)(\frac{1}{p} -1) + \alpha\} > 0.
\end{eqnarray*}
Therefore we have $\mu_{\delta}, \kappa_{\delta} > 0$ for $\delta$ small enough, and so 
\begin{eqnarray}\label{R2}
\sum_{j\geq 1} \|L_j * f\|_2^{a/b} \cdot I_2^{\frac{1}{2}(1-a/b)} \lesssim R^{\mu_{\delta}} + |\log R| \cdot R^{\kappa_{\delta}} \leq 1.
\end{eqnarray}
Now we conclude that $S_2 \lesssim 1$ from \eqref{R} and \eqref{R2}. The proof is complete.
\end{proof}

\section{Necessary conditions}
In this section we show that the Hardy space boundedness obtained in the previous section is sharp except for the endpoint cases. We only give an example for Theorem \ref{K}. Examples for the other theorems can be found similarly. We refer to Sj\'olin \cite{Sj3} for the Euclidean case.
\\
We let  $g(x)$ a function such that 
\begin{eqnarray*}
\int_{\mathbb{R}} x^{\alpha} g(x) dx = 0 \quad \textrm{for}~ 0\leq \alpha \leq k \quad \textrm{and} \quad \int_{\mathbb{R}} x^{k+1} g(x) dx \neq 0.
\end{eqnarray*}
Let  $h(x_2,\dots,x_{2n},x_{2n+1})$ a function supported on the ball $B(0,1)$ such that $\int_{\mathbb{R}^{2n}} h \neq 0$ and let $f$ be the function on $\mathbb{R}^{2n+1}$ defined by  $f(x_1,\dots,x_{2n+1}) = g(x_1) h(x_2,\dots,x_{2n+1}) \forall (x_1,\cdots,x_{2n+1})\in \mathbb{R}^{2n+1}$. Then 
\begin{eqnarray*}
\int_{\mathbb{H}^{n}} x^{\alpha} f(x) = 0 , \quad \textrm{if}\quad |\alpha|\leq k.
\end{eqnarray*} 
For $\epsilon >0$ set $f_{\epsilon}(x) = \epsilon^{-(2n+2)/p} f (\frac{x}{\epsilon})$. We note that $\| f_{\epsilon}\|_{H^p} = C~ \textrm{for all } \epsilon>0$. Assume that $T_{K_{\alpha,\beta}}$ is bounded on $H^p$. Then $\| T_{K_{\alpha,\beta}} (f_{\epsilon})\|_{H^p} \lesssim 1$. Note that $|y| \leq \epsilon$ for $y \in \textrm{supp}(f_{\epsilon})$. Then, for $|x| \geq C \epsilon$ with a large constant $C >0$, we have
\begin{eqnarray*}
K * f (x) &=& \int K(x y^{-1}) f_{\epsilon}(y) dy
\\
&=& \int \left( K(x y^{-1}) - \sum_{|\alpha|\leq k+1} \frac{1}{\alpha !} D^{\alpha} K(x) y^{\alpha}\right) f_{\epsilon} (y) dy + \int \left(\sum_{|\alpha|\leq k+1} \frac{1}{\alpha !} D^{\alpha} K(x) y^{\alpha} \right) f_{\epsilon} (y) dy
\\
&=& \int D^{k+2} K(x y_*^{-1}) O (y^{k+2}) f_{\epsilon}(y) dy + C \partial_{x_1}^{k+1}K(x) \int_{\mathbb{R}} y_1^{k+1} f_{\epsilon} (y_1) dy_1, \quad \quad |y_{*}|\leq |y|\leq \epsilon
\\
&=& O(\epsilon^{(2n+2)+k+2 -\frac{(2n+2)}{p}} |x|^{-(n+\alpha + (k+2)(\beta+1))}) + \epsilon^{k+1+(2n+2)-\frac{(2n+2)}{p}} \partial_{x_1}^{k+1} K(x).
\end{eqnarray*}
Take $K (x) = |x|^{-2n-2-\alpha} e^{i|x|^{-\beta}}\chi(x).$ We see that $|\partial_{x_1}^{k+1} K(x)| \sim |x|^{-(2n+2)-\alpha - (k+1){(\beta+1)}} $ for small $x$. 
For $ \epsilon \lesssim |x|^{\beta+1}$ we have
\begin{eqnarray*}
\epsilon^{(2n+2) + k+2 - \frac{2n+2}{p} } |x|^{-(2n+2+\alpha +(k+2)(\beta+1))} \lesssim \epsilon^{(2n+2)+(k+1) - \frac{(2n+2)}{p} }|x|^{-(2n+2)-\alpha -(k+1)(\beta+1)}.
\end{eqnarray*}
Therefore we get
\begin{eqnarray*}
K_{\alpha, \beta} * f_{\epsilon} (x) \sim \epsilon^{(2n+2)+k+1-\frac{(2n+2)}{p}}|x|^{-(2n+2)-\alpha - (k+1)(\beta+1)} \quad \textrm{for}~|x| \gtrsim \epsilon^{1/{(\beta+1)}}.
\end{eqnarray*}
Then,
\begin{eqnarray*}
1 \gtrsim \int_{\mathbb{H}^n} |K_{\alpha,\beta} * f_{\epsilon} (x)|^p dx &\gtrsim& \epsilon^{p(2n+2)+kp+p-(2n+2)} \int_{c\geq |x| \gtrsim \epsilon^{1/(\beta+1)}}  |x|^{-(2n+2)p-\alpha p - (k+1)(\beta+1)p} dx 
\\
&\gtrsim& \epsilon^{p(2n+2)+kp+p-(2n+2)} \epsilon^{-\frac{(2n+2)p-(2n+2)+\alpha p}{\beta+1} - (k+1) p} 
\\
&=& \epsilon^{\frac{-p}{\beta+1} \left[(\frac{1}{p}-1)(2n+2) \beta + \alpha  \right]}.
\end{eqnarray*}
This implies that $ (1-\frac{1}{p})(2n+2) \beta + \alpha$ must be $\leq 0$. This shows that Theorem \ref{K} is sharp except the endpoint case $ (1-\frac{1}{p})(2n+2) \beta + \alpha =0$.

\section*{Acknowledgements}\thispagestyle{empty} 
I am deeply grateful to my advisor Rapha\"el Ponge for his support and careful proofreading during the preparation of this paper.

\end{document}